\newtheorem{theorem}{Theorem}[section]
\newtheorem{corollary}[theorem]{Corollary}
\newtheorem{lemma}[theorem]{Lemma}
\newtheorem{proposition}[theorem]{Proposition}
\newtheorem{definition}[theorem]{Definition}
\newtheorem{remark}[theorem]{Remark}
\newtheorem{example}[theorem]{Example}
\newtheorem{conjecture}[theorem]{Conjecture}
\numberwithin{equation}{section}
\DeclareMathOperator*{\diag}{diag}
\DeclareMathOperator*{\seq}{Seq}
\DeclareMathOperator*{\res}{Res}
\begin{document}

\title{M-systems and Cluster algebras}
\author{Qian-Qian Zhang, Bing Duan, Jian-Rong Li, Yan-Feng Luo}
\address{Qian-Qian Zhang: School of Mathematics and Statistics, Lanzhou University, Lanzhou 730000, P. R. China.}
\email{Zhangqq2014@lzu.edu.cn}

\address{Bing Duan: School of Mathematics and Statistics, Lanzhou University, Lanzhou 730000, P. R. China.}
\email{duan890818@163.com}

\address{Jian-Rong Li, Einstein Institute of Mathematics, The Hebrew University of Jerusalem, Jerusalem 9190401, Israel, and School of Mathematics and Statistics, Lanzhou University, Lanzhou 730000, P. R. China}
\email{lijr07@gmail.com, lijr@lzu.edu.cn}

\address{Yan-Feng Luo: School of Mathematics and Statistics, Lanzhou University, Lanzhou 730000, P. R. China.}
\email{luoyf@lzu.edu.cn}
\date{}

\maketitle

\begin{abstract}
The aim of this paper is two-fold: (1) introduce four systems of equations called M-systems and dual M-systems of types $A_{n}$ and $B_{n}$ respectively; (2) make a connection between M-systems (dual M-systems) and cluster algebras and prove that the Hernandez-Leclerc conjecture is true for minimal affinizations of types $A_n$ and $B_n$.

\hspace{0.15cm}

\noindent
{\bf Key words}:  M-systems; cluster algebras; quantum affine algebras; minimal affinizations; $q$-characters; monoidal categorification of cluster algebras

\hspace{0.15cm}

\noindent
{\bf 2010 Mathematics Subject Classification}: 13F60; 17B37
\end{abstract}

\section{Introduction}
In the paper \cite{FZ02}, Fomin and Zelevinsky introduced the theory of cluster algebras to study canonical bases of quantum groups introduced by Lusztig \cite{L90} and Kashiwara \cite{K91} and total positivity for semisimple algebraic groups developed by Lusztig \cite{L94}. It has exciting connections and applications to many areas of mathematics and physics including integrable systems, Poisson geometry, quiver representations, Teichm\"{u}ller theory, and tropical geometry.

The aim of this paper is two-fold:
\begin{enumerate}[(1)]
\item introduce four systems of equations called M-systems and dual M-systems of types $A_{n}$ and $B_{n}$ respectively;

\item make a connection between M-systems (dual M-systems) and cluster algebras and prove that the Hernandez-Leclerc conjecture (Conjecture 13.2 in \cite{HL10} and Conjecture 9.1 in \cite{Le10}, Conjecture 5.2 in \cite{HL13}) is true for minimal affinizations of types $A_n$ and $B_n$.
\end{enumerate}

Let $\mathfrak{g}$ be a simple Lie algebra and $U_q \widehat{\mathfrak{g}}$ the corresponding quantum affine algebra. The T-systems are functional relations which were defined in \cite{KNS94}.   Nakajima and Hernandez proved that the $q$-characters of Kirillov-Reshetikhin modules satisfy the T-systems, see \cite{Nak03}, \cite{Nak04}, \cite{Her06}. By the works \cite{HL10}, \cite{Nak11}, \cite{IIKKN13a}, \cite{IIKKN13b}, the T-systems are embedded in the cluster algebras associated with certain quivers.

In \cite{HL10}, Hernandez and Leclerc introduced the concept of monoidal categorifications of cluster algebras. They gave the following conjecture (we will recall the definition of $\mathcal{C}_{\ell}$ in Section \ref{section Hernandez-Leclerc conjecture}, see \cite{HL10}, \cite{HL13}).

\begin{conjecture}[{Conjecture 13.2 in \cite{HL10}, Conjecture 9.1 in \cite{Le10}, Conjecture 5.2 in \cite{HL13}}] \label{Hernandez-Leclerc conjecture 1}
The Grothendieck ring of some subcategory $\mathcal{C}_{\ell}$ of the category $\mathcal{C}$ of all finite-dimensional $U_q \widehat{\mathfrak{g}}$-modules has a cluster algebra structure. The simple $U_{q} \widehat{\mathfrak{g}}$-modules which are prime and real are cluster variables in some cluster algebra.
\end{conjecture}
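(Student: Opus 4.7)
The plan is to prove the Hernandez-Leclerc conjecture for minimal affinizations of types $A_n$ and $B_n$ by exhibiting every minimal affinization as an explicit cluster variable in a cluster algebra structure on the Grothendieck ring $K_0(\mathcal{C}_\ell)$. The strategy runs parallel to the Nakajima--Hernandez proof for Kirillov--Reshetikhin modules via T-systems: there, the T-system relations are identified with cluster exchange relations for a specific quiver, and the KR modules are identified with cluster variables. For minimal affinizations, T-systems are not directly available, so one needs a substitute, which is where the M-systems (for type $A_n$) and dual M-systems (for type $B_n$) come in.

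First, I would introduce M-systems and dual M-systems as functional equations of the form $[M][M'] = [M_1][M_2] + [N_1][N_2]$ holding in $K_0(\mathcal{C})$, whose arguments are $q$-characters of minimal affinizations. The key technical step is to prove that each such equation is a genuine identity of $q$-characters. This amounts to a careful analysis of the $q$-characters of minimal affinizations using the Frenkel--Mukhin algorithm together with the known Jacobi--Trudi-type expansions available in types $A$ and $B$, followed by checking that the products on each side share the same highest $\ell$-weight and the same multiplicities of dominant monomials.

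Second, I would construct an initial seed $(\mathbf{x}_0, Q_0)$ for a cluster algebra $\mathcal{A}$ whose initial cluster variables are indexed by a distinguished finite set of minimal affinizations lying in $\mathcal{C}_\ell$, with the quiver $Q_0$ chosen so that mutation at each mutable vertex $k$ yields precisely one M-system (resp.\ dual M-system) exchange relation. I would then run a bookkeeping argument showing that iterated mutations from $(\mathbf{x}_0, Q_0)$ exhaust all minimal affinizations in $\mathcal{C}_\ell$, and that the map $x_i \mapsto [L_i]$ extends to an injective ring homomorphism $\mathcal{A} \hookrightarrow K_0(\mathcal{C}_\ell)$. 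Combined with the categorification results of Hernandez--Leclerc that already identify $K_0(\mathcal{C}_\ell)$ with a cluster algebra of the expected type, this identifies every minimal affinization with a cluster variable and hence with a prime real simple module, confirming the conjecture in the stated range.

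The hard part will be the compatibility in step two: an M-system identity is a $q$-character statement, but cluster mutation imposes the rigid exchange rule $x'_k x_k = \prod x_i^{[b_{ik}]_+} + \prod x_i^{[-b_{ik}]_+}$, so one must show that the combinatorics of which minimal affinizations appear on each side of an M-system equation, with what multiplicities, exactly matches the arrows of the mutated quiver $\mu_k(Q_0)$ at every step. This is particularly delicate in type $B_n$, where the non-simply-laced Dynkin diagram forces the dual M-system to have a more intricate shape than the type $A_n$ M-system, and where one must track how mutation interacts with the short/long root asymmetry. Verifying this compatibility simultaneously for every vertex and every iteration is the technical heart of the argument.
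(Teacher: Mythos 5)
Your overall architecture matches the paper's: introduce M-systems whose equations are identities in the Grothendieck ring, verify them by a dominant-monomial analysis of products of $q$-characters (the paper uses the Mukhin--Young path description rather than Jacobi--Trudi expansions, but the spirit is the same), realize each equation as an exchange relation in the Hernandez--Leclerc cluster algebra $\mathscr{A}$ (resp.\ $\mathscr{A}'$) via explicit mutation sequences, and conclude that every minimal affinization is a cluster variable. One small misattribution: the dual M-systems are not the type-$B_n$ substitute for the type-$A_n$ M-system. The paper has M-systems of both types $A_n$ and $B_n$, and the dual M-systems (again of both types) are obtained by applying the involution $\iota$ of Lemma \ref{map iota}; they handle the second half of the family of minimal affinizations, namely those with highest weight monomials $\widetilde{M}^{(s)}_{k_1,\ldots,k_n}$, which correspond to cluster variables in the dual cluster algebras $\widetilde{\mathscr{A}}$, $\widetilde{\mathscr{A}'}$.

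The genuine gap is in your final inference: you write that identifying every minimal affinization with a cluster variable identifies it ``hence with a prime real simple module.'' That implication runs in the wrong direction. The statement ``every cluster monomial is the class of a real simple module'' is itself part of the monoidal categorification conjecture and cannot be invoked; what must be done to verify Conjecture \ref{Hernandez-Leclerc conjecture 1} for minimal affinizations is to prove \emph{independently} that these modules are prime and real, and then, separately, that they are cluster variables. Primality is available from \cite{CMY13}, but reality requires a proof: the paper's Theorem \ref{minimal affinizations of types AB are real} shows that $\chi_q(\mathcal{M}^{(s)}_{k_1,\ldots,k_n})^2$ contains exactly one dominant monomial (a right-negativity and path-description argument), which forces $\mathcal{M}\otimes\mathcal{M}$ to be simple. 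Without this step your argument establishes only the cluster-variable half of the claim and leaves the ``prime and real'' hypothesis of the conjecture unverified for the modules in question.
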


In the case of types $A_n$ and $D_4$, $\ell = 1$, Conjecture \ref{Hernandez-Leclerc conjecture 1} is proved in \cite{HL10}. In the case of types $ADE$, $\ell=1$, Conjecture \ref{Hernandez-Leclerc conjecture 1} is proved in \cite{Nak11}. The work of \cite{Nak11} is generalized to all acyclic quivers by \cite{KQ14} and \cite{Lee13}. It is proved in \cite{HL13} that Conjecture \ref{Hernandez-Leclerc conjecture 1} is true for Kirillov-Reshetikhin modules in all types.

The realization of the T-systems for non-simply laced types, including type $B_n$, in cluster algebras was first given by \cite{IIKKN13a}, \cite{IIKKN13b}. In the paper \cite{HL13}, Hernandez and Leclerc apply the theory of cluster algebras to study the $q$-characters of a family of $U_q \widehat{\mathfrak{g}}$-modules called Kirillov-Reshetkhin modules and they give a new algorithm to compute the $q$-characters of these modules.

The family of minimal affinizations of quantum groups is an important family of simple modules of $U_q \widehat{\mathfrak{g}}$ which was introduced in \cite{C95}. The celebrated Kirillov-Reshetikhin modules are examples of minimal affinizations. Minimal affinizations are studied intensively in recent years, see for example, \cite{Her07}, \cite{MP07}, \cite{M10}, \cite{CG11}, \cite{MP11}, \cite{MY12a}, \cite{MY12b}, \cite{CMY13}, \cite{LM13}, \cite{Nao13}, \cite{Nao14}, \cite{MY14}, \cite{QL14}, \cite{S14}, \cite{Li15}.

Mukhin and Young introduced the extended T-systems in \cite{MY12b} and they showed that the extended T-systems of types $A_n$, $B_n$ are satisfied by the $q$-characters of a class of $U_q \widehat{\mathfrak{g}}$-modules called snake modules of types $A_n$, $B_n$. The class of snake modules contains all minimal affinizations.

In this paper, we use a new approach to study minimal affinizations. The family of minimal affinizations for $U_q \widehat{\mathfrak{g}}$ can be divided into two parts according to the highest weight monomials. For example, in type $A_n$, let (the notations will be explained in Section 2)
\begin{align*}
M^{(s)}_{k_{1},k_{2},\ldots,k_{n}} & =\prod_{j=1}^{n}\left(\prod_{i_{j}=0}^{k_{j}-1}j_{s+2\sum^{j-1}_{p=1}{k_p}+2i_{j}+(j-1)}\right),  \\
\widetilde{M}^{(s)}_{k_{1},k_{2},\ldots,k_{n}} & =\prod_{j=1}^{n}\left(\prod_{i_{j}=0}^{k_{j}-1}j_{-s-2\sum^{j-1}_{p=1}{k_p}-2i_{j}-(j-1)}\right).
\end{align*}
The first (resp. second) part of the family of minimal affinizations of type $A_n$ consists of minimal affinizations with highest weight monomials $M^{(s)}_{k_{1},k_{2},\ldots,k_{n}}$ (resp. $\widetilde{M}^{(s)}_{k_{1},k_{2},\ldots,k_{n}}$).
The M-systems (resp. dual M-systems) introduced in this paper are systems of equations which are satisfied by the $q$-characters of first (resp. second) part of the family of minimal affinizations of $U_q \widehat{\mathfrak{g}}$, Theorem \ref{M-systems} (resp. \ref{dual M-systems}).

The extended T-system of type $A_n$ (resp. $B_n$) is closed within the family of snake modules of type $A_n$ (resp. $B_n$), \cite{MY12b}. The M-system of type $A_{n}$ (resp. $B_{n}$) is closed within the family of minimal affinizations of type $A_{n}$ (resp. $B_{n}$). By using an elementary involution of the Grothendieck ring, we obtain the dual M-systems of types $A_{n}$ and $B_{n}$.

The union of the M-systems of types $A_n$, $B_n$, the dual M-systems of types $A_n$, $B_n$ (defined in Theorem \ref{dual M-systems}), and the T-systems of types $A_n$, $B_n$ is a closed system which contains all minimal affinizations of types $A_n$, $B_n$ (including Kirillov-Reshetikhin modules of types $A_n$, $B_n$).

The modules in the summands on the right hand side of each equation in the M-systems and dual M-systems are simple, Theorems \ref{irreducible}, \ref{dual M-systems}.

T-systems have many applications to mathematics and physics, see \cite{KNS11}. Since the equations in M-systems and dual M-systems have some nice properties and they are satisfied by $q$-characters of some family of $U_q \widehat{\mathfrak{g}}$-modules, we expect that M-systems and dual M-systems will have applications to mathematics and physics like T-systems.

We show that the equations in the M-system of type $A_{n}$ (resp. $B_{n}$) correspond to mutations in some cluster algebra $\mathscr{A}$ (resp. $\mathscr{A}'$) introduced in \cite{HL13}. Moreover, every minimal affinization in the M-system of type $A_n$ (resp. $B_n$) corresponds to a cluster variable in $\mathscr{A}$ (resp. $\mathscr{A}'$).

There are cluster algebras $\widetilde{\mathscr{A}}$, $\widetilde{\mathscr{A}'}$ which are dual to $\mathscr{A}$ and $\mathscr{A}'$ respectively such that every minimal affinization in the dual M-system of type $A_n$ (resp. $B_n$) corresponds to a cluster variable in $\widetilde{\mathscr{A}}$ (resp. $\widetilde{\mathscr{A}'}$).

We give a proof of the fact that minimal affinizations of types $A_n$ and $B_n$ are real. According to the results in \cite{CMY13}, minimal affinizations of all types are prime. Therefore minimal affinizations of type $A_n$ (resp. $B_n$) are simple, real, and prime and they correspond to cluster variables in $\mathscr{A}$ (resp. $\mathscr{A}'$). Thus we have shown that Conjecture \ref{Hernandez-Leclerc conjecture 1} is true for minimal affinizations of types $A_n$ and $B_n$.

We also have m-systems and dual m-systems of types $A_n, B_n$ which are obtained from M-systems and dual M-systems of types $A_n, B_n$ by restricting the modules in M-systems and dual M-systems to $U_q \mathfrak{g}$-modules, Sections \ref{m-systems Uqg} and \ref{dual m-systems Uqg}.

The M-systems also exist for other Dynkin types of minimal affinizations. The M-system of type $G_2$ is studied in the paper \cite{QL14}. Since the method of proving that the $q$-characters of minimal affinizations satisfy the M-systems of types $C$, $D$, $E$, $F$ are different from the method used in this paper, we will write them in other papers.

The paper is organized as follows. In Section 2, we give some background information about cluster algebras and representation theory of quantum affine algebras. In Section \ref{M-systems of types AB}, we describe the M-systems of types $A_n$, $B_n$. In Section \ref{relation between M-systems and cluster algebras}, we study relations between M-systems and cluster algebras. In Section \ref{dual M system section}, we study the dual M-systems of types $A_{n}, B_{n}$. In Section \ref{section Hernandez-Leclerc conjecture}, we show that the Hernandez-Leclerc conjecture is true for minimal affinizations of types $A_n$ and $B_n$. In Sections \ref{proof M-systems} and \ref{proof irreducible} we prove Theorems \ref{M-systems} and \ref{irreducible} given in Section \ref{M-systems of types AB}. In the Appendix, we give some examples of mutation sequences.

\section{Preliminaries}

\subsection{Cluster algebras}
Cluster algebras are invented by Fomin and Zelevinsky in \cite{FZ02}. Let $\mathbb{Q}$ be the rational field and $\mathcal{F}=\mathbb{Q}(x_{1}, x_{2}, \ldots, x_{n})$ the field of rational functions. A seed in $\mathcal{F}$ is a pair $\Sigma=({\bf y}, Q)$, where ${\bf y} = (y_{1}, y_{2}, \ldots, y_{n})$ is a free generating set of $\mathcal{F}$, and $Q$ is a quiver with vertices labeled by $1, 2, \ldots, n$. Assume that $Q$ has neither loops nor $2$-cycles. For $k=1, 2, \ldots, n$, one defines a mutation $\mu_k$ by $\mu_k({\bf y}, Q) = ({\bf y}', Q')$. Here ${\bf y}' = (y_1', \ldots, y_n')$, $y_{i}'=y_{i}$, for $i\neq k$, and
\begin{equation}
y_{k}'=\frac{\prod_{i\rightarrow k} y_{i}+\prod_{k\rightarrow j} y_{j}}{y_{k}}, \label{exchange relation}
\end{equation}
where the first (resp. second) product in the right hand side is over all arrows of $Q$ with target (resp. source) $k$, and $Q'$ is obtained from $Q$ by
\begin{enumerate}
\item[(i)] adding a new arrow $i\rightarrow j$ for every existing pair of arrow $i\rightarrow k$ and $k\rightarrow j$;

\item[(ii)] reversing the orientation of every arrow with target or source equal to $k$;

\item[(iii)] erasing every pair of opposite arrows possible created by (i).
\end{enumerate}
The mutation class $\mathcal{L}(\Sigma)$ is the set of all seeds obtained from $\Sigma$ by a finite sequence of mutations $\mu_{k}$. If $\Sigma'=((y_{1}', y_{2}', \ldots, y_{n}'), Q')$ is a seed in $\mathcal{L}(\Sigma)$, then the subset $\{y_{1}', y_{2}', \ldots, y_{n}'\}$ is called a $cluster$, and its elements are called \textit{cluster variables}. The \textit{cluster algebra} $\mathcal{A}_{\Sigma}$ is the subring of $\mathcal{F}$ generated by all cluster variables. \textit{Cluster monomials} are monomials in the cluster variables supported on a single cluster.

In this paper, the initial seed in the cluster algebra we use is of the form $\Sigma=({\bf y}, Q)$, where ${\bf y}$ is an infinite set and $Q$ is an infinite quiver.

\begin{definition}[{Definition 3.1, \cite{GG14}}] \label{definition of cluster algebras of infinite rank}
Let $Q$ be a quiver without loops or $2$-cycles and with a countably infinite number of vertices labelled by all integers $i \in \mathbb{Z}$. Furthermore, for each vertex $i$ of $Q$ let the
number of arrows incident with $i$ be finite. Let ${\bf y} = \{y_i \mid i \in \mathbb{Z}\}$. An infinite initial seed is a pair $({\bf y}, Q)$. By finite sequences of mutations
at vertices of $Q$ and simultaneous mutations of the set ${\bf y}$ using the exchange relation (\ref{exchange relation}), one obtains a family of infinite seeds. The sets of variables in these seeds are called
the infinite clusters and their elements are called the cluster variables. The cluster algebra of
infinite rank of type $Q$ is the subalgebra of $\mathbb{Q}({\bf y})$ generated by the cluster variables.
\end{definition}

\subsection{Quantum affine algebras} \label{definition of quantum affine algebras}
Let $\mathfrak{g}$ be a simple Lie algebra and $I=\{1, \ldots, n\}$ the indices of the Dynkin diagram of $\mathfrak{g}$ (we use the same labeling of the vertices of the Dynkin diagram of $\mathfrak{g}$ as the one used in \cite{Car05}). Let $C=(C_{ij})_{i,j\in I}$ be the Cartan matrix of $\mathfrak{g}$, where $C_{ij}=\frac{2 ( \alpha_i, \alpha_j ) }{( \alpha_i, \alpha_i )}$. There is a matrix $D=\diag(d_{i}\mid i\in I)$ with entries in $\mathbb{Z}_{>0}$ such that $B=DC=(b_{ij})_{i,j\in I}$ is symmetric. We have $D=\diag(d_{i}\mid i\in I)$, where $d_i=1$, $i\in I$, for type $A_n$ and $d_i=2$, $i=1,\ldots, n-1$, $d_n=1$, for type $B_n$. Let $t=\max \{d_{i}\mid i\in I\}$. Then $t=1$ for type $A_{n}$ and $t=2$ for type $B_{n}$.

Quantum groups are introduced independently by Jimbo \cite{Jim85} and Drinfeld \cite{Dri87}. Quantum affine algebras are infinite-dimensional quantum groups. In this paper, we take $q$ to be a non-zero complex number which is not a root of unity. The quantum affine algebra $U_q \widehat{\mathfrak{g}}$ in Drinfeld's new realization, see \cite{Dri88}, is generated by $x_{i, m}^{\pm}$ ($i\in I, m\in \mathbb{Z}$), $k_i^{\pm 1}$ $(i\in I)$, $h_{i, m}$ ($i\in I, m\in \mathbb{Z}\backslash \{0\}$) and central elements $c^{\pm 1/2}$, subject to certain relations.

The subalgebra of $U_q \widehat{\mathfrak{g}}$ generated by $(k_i^{\pm})_{i\in I}, (x_{i, 0}^{\pm})_{i\in I}$ is a Hopf subalgebra of $U_q \widehat{\mathfrak{g}}$ and is isomorphic as a Hopf algebra to the quantized enveloping algebra $U_q\mathfrak{g}$ of $\mathfrak{g}$. Therefore $U_q \widehat{\mathfrak{g}}$-modules restrict to $U_q\mathfrak{g}$-modules.

\subsection{Finite-dimensional $U_q \widehat{\mathfrak{g}}$-modules and $q$-characters}

In this section, we recall the standard facts about finite-dimensional $U_q \widehat{\mathfrak{g}}$-modules and their $q$-characters, see \cite{CP94}, \cite{CP95a}, \cite{FR98}.

A module $V$ of $U_q \widehat{\mathfrak{g}}$ is of type $1$ if $c^{\pm 1/2}$ acts as the identity on $V$ and
\begin{align} \label{decomposition}
V=\bigoplus_{\lambda \in P} V_{\lambda}, \ V_{\lambda} = \{v\in V  :   k_i v=q^{( \alpha_i, \lambda )} v \}.
\end{align}
In the following, all modules will be assumed to be finite-dimensional and of type $1$ without further comment. The decomposition (\ref{decomposition}) of a finite-dimensional module $V$ into its $U_q\mathfrak{g}$-weight spaces can be refined by decomposing it into the Jordan subspaces of the mutually commuting operators $\phi_{i, \pm r}^{\pm}$, see \cite{FR98}:
\begin{align}
V=\bigoplus_{\gamma} V_{\gamma}, \ \gamma=(\gamma_{i, \pm r}^{\pm})_{i\in I, r\in \mathbb{Z}_{\geq 0}}, \ \gamma_{i, \pm r}^{\pm} \in \mathbb{C},
\end{align}
where
\begin{align*}
V_{\gamma} = \{v\in V  :   \exists k\in \mathbb{N}, \forall i \in I, m \geq 0, (\phi_{i, \pm m}^{\pm} - \gamma_{i, \pm m}^{\pm})^{k} v =0 \}.
\end{align*}
Here $\phi_{i, n}^{\pm}$'s are determined by the formula
\begin{align}
\phi_i^{\pm}(u) = \sum_{n=0}^{\infty} \phi_{i, \pm n}^{\pm} u^{\pm n} = k_i^{\pm 1} \exp\left( \pm (q-q^{-1}) \sum_{m=1}^{\infty} h_{i, \pm m}u^{\pm m}\right).
\end{align}
If $\dim(V_{\gamma})>0$, then $\gamma$ is called an \textit{$l$-weight} of $V$. For every finite-dimensional $U_q \widehat{\mathfrak{g}}$-module, the $l$-weights are known, see \cite{FR98}, to be of the form
\begin{align}
\gamma_i^{\pm}(u) = \sum_{r=0}^{\infty} \gamma_{i, \pm r}^{\pm} u^{\pm r} = q_i^{\deg Q_i - \deg R_i} \frac{Q_i(uq_i^{-1})R_i(uq_i)}{Q_{i}(uq_i) R_{i}(uq_i^{-1})}, \label{gamma}
\end{align}
where the right hand side is to be treated as a formal series in positive (resp. negative) integer powers of $u$, and $Q_i, R_i$ are polynomials of the form
\begin{align}
Q_i(u) = \prod_{a\in \mathbb{C}^{\times}} (1-ua)^{w_{i, a}}, \ R_{i}(u)=\prod_{a\in \mathbb{C}^{\times}}(1-ua)^{x_{i, a}}, \label{QR}
\end{align}
for some $w_{i, a}, x_{i, a} \in \mathbb{Z}_{\geq 0}, i\in I, a\in \mathbb{C}^{\times}$. Let $\mathcal{P}$ denote the free abelian multiplicative group of monomials in infinitely many formal variables $(Y_{i, a})_{i\in I, a\in \mathbb{C}^{\times}}$.

There is a bijection $m$ from the set of $l$-weights of finite-dimensional modules to $\mathcal{P}$ given as follows. Let $\gamma$ be the $l$-weight given by (\ref{gamma}), (\ref{QR}). Then $m(\gamma)=\prod_{i\in I, a\in \mathbb{C}^{\times}} Y_{i, a}^{w_{i, a}-x_{i, a}}$. For an $l$-weight $\gamma$, we write $V_{m(\gamma)}$ for $V_{\gamma}$.

Let $\mathbb{Z}\mathcal{P} = \mathbb{Z}[Y_{i, a}^{\pm 1}]_{i\in I, a\in \mathbb{C}^{\times}}$ be the group ring of $\mathcal{P}$. The $q$-character of a $U_q \widehat{\mathfrak{g}}$-module $V$ is given by
\begin{align*}
\chi_q(V) = \sum_{m\in \mathcal{P}} \dim(V_{m}) m \in \mathbb{Z}\mathcal{P}.
\end{align*}
Let $\mathcal{R}$ be the Grothendieck ring of finite-dimensional $U_q \widehat{\mathfrak{g}}$-modules and $[V]\in \mathcal{R}$ the class of a finite-dimensional $U_q \widehat{\mathfrak{g}}$-module $V$. The $q$-character map defines an injective ring homomorphism, see \cite{FR98},
\begin{align*}
\chi_q: \mathcal{R} \to \mathbb{Z}\mathcal{P}.
\end{align*}

For any finite-dimensional $U_q \widehat{\mathfrak{g}}$-module $V$, denote by $\mathscr{M}(V)$ the set of all monomials in $\chi_q(V)$. For each $j\in I$, a monomial $m=\prod_{i\in I, a\in \mathbb{C}^{\times}} Y_{i, a}^{u_{i, a}}$, where $u_{i, a}$ are some integers, is said to be \textit{$j$-dominant} (resp. \textit{$j$-anti-dominant}) if and only if $u_{j, a} \geq 0$ (resp. $u_{j, a} \leq 0$) for all $a\in \mathbb{C}^{\times}$. A monomial is called \textit{dominant} (resp. \textit{anti-dominant}) if and only if it is $j$-dominant (resp. $j$-anti-dominant) for all $j\in I$. Let $\mathcal{P}_+ \subset \mathcal{P}$ denote the set of all dominant monomials and for $i \in I$, let $\mathcal{P}_{i,+} \subset \mathcal{P}$ denote the set of all $i$-dominant monomials.

Let $V$ be a $U_q \widehat{\mathfrak{g}}$-module and $m\in \mathscr{M}(V)$ a monomial. A non-zero vector $v\in V_m$ is called a \textit{highest $l$-weight vector} with \textit{highest $l$-weight} $\gamma(m)$ if
\begin{align*}
x_{i, r}^{+} \cdot v=0, \ \phi_{i, \pm t}^{\pm} \cdot v=\gamma(m)_{i, \pm t}^{\pm} v, \ \forall i\in I, r\in \mathbb{Z}, t\in \mathbb{Z}_{\geq 0}.
\end{align*}
The module $V$ is called a \textit{highest $l$-weight representation} if $V=U_q \widehat{\mathfrak{g}}\cdot v$ for some highest $l$-weight vector $v\in V$.

It is known, see \cite{CP94}, \cite{CP95a}, that for each $m_+\in \mathcal{P}_+$ there is a unique finite-dimensional irreducible module, denoted $L(m_+)$, of $U_q \widehat{\mathfrak{g}}$ that is highest $l$-weight with highest $l$-weight $\gamma(m_+)$, and moreover every finite-dimensional irreducible $U_q \widehat{\mathfrak{g}}$-module is of this form for some $m_+ \in \mathcal{P}_+$. We call $m_+$ the \textit{highest monomial} in $\chi_q(L(m_+))$. Also, if $m_+, m_+' \in \mathcal{P}_+$ and $m_{+}\neq m_{+}'$, then $L(m_+) \not\cong L(m_+')$.

For $b\in \mathbb{C}^{\times}$, define the shift of spectral parameter map $\tau_b: \mathbb{Z}\mathcal{P} \to \mathbb{Z}\mathcal{P}$ to be a homomorphism of rings sending $Y_{i, a}^{\pm 1}$ to $Y_{i, ab}^{\pm 1}$. Let $m_1, m_2 \in \mathcal{P}_+$. If $\tau_b(m_1) = m_2$, then $\tau_b \chi_q(L(m_1)) = \chi_q(L(m_2))$.

The concept of special module was introduced in \cite{Nak04} and the concept of anti-special module was introduced in \cite{Her07}. A finite-dimensional $U_q \widehat{\mathfrak{g}}$-module $V$ is said to be \textit{special} if and only if $\mathscr{M}(V)$ contains exactly one dominant monomial. It is called \textit{anti-special} if and only if $\mathscr{M}(V)$ contains exactly one anti-dominant monomial. It is called \textit{thin} if and only if all $l$-weight spaces of $V$ have dimensions less or equal to $1$. It is said to be \textit{prime} if and only if it is not isomorphic to a tensor product of two non-trivial $U_q \widehat{\mathfrak{g}}$-modules, see \cite{CP97}. Clearly, if a module is special or anti-special, then it is irreducible. A simple $U_q \widehat{\mathfrak{g}}$-modules $M$ is called real if $M\bigotimes M$ is simple, see \cite{Le03}.

For simplicity, we use $\chi_q(m_+)$ to denote $\chi_q(L(m_+))$ and use $\chi_{q}(m_1)\subseteq \chi_{q}(m_2)$ to denote $\mathscr{M}(L(m_1))\subseteq \mathscr{M}(L(m_2))$ for dominant monomials $m_+$, $m_1$, $m_2$.

The elements $A_{i, a} \in \mathcal{P}, i\in I, a\in \mathbb{C}^{\times}$, are defined by
\begin{align*}
A_{i, a} = Y_{i, aq_{i}}Y_{i, aq_{i}^{-1}} \prod_{C_{ji}=-1}Y_{j, a}^{-1} \prod_{C_{ji}=-2}Y_{j, aq}^{-1}Y_{j, aq^{-1}}^{-1}\prod_{C_{ji}=-3}Y_{j, aq^{2}}^{-1}Y_{j, a}^{-1}Y_{j, aq^{-2}}^{-1},
\end{align*}
where $q_i = q^{d_i}$ ($d_i$ is defined in Section \ref{definition of quantum affine algebras}), see \cite{FR98}.

Let $\mathcal{Q}$ be the subgroup of $\mathcal{P}$ generated by $A_{i, a}, i\in I, a\in \mathbb{C}^{\times}$. Let $\mathcal{Q}^{\pm}$ be the monoids generated by $A_{i, a}^{\pm 1}, i\in I, a\in \mathbb{C}^{\times}$. There is a partial order $\leq$ on $\mathcal{P}$ in which
\begin{align}
m\leq m' \text{ if and only if } m'm^{-1}\in \mathcal{Q}^{+}. \label{partial order of monomials}
\end{align}
For all $m_+ \in \mathcal{P}_+$, $\mathscr{M}(L(m_+)) \subset m_+\mathcal{Q}^{-}$, see \cite{FM01}.

We will need the concept \text{right-negative} to classify dominant monomials. Let $m$ be a monomial. If for all $a \in \mathbb{C}^{\times}$ and $i\in I$, we have the property: if the power of $Y_{i, a}$ in $m$ is non-zero and the power of $Y_{j, aq^k}$ in $m$ is zero for all $j \in I, k\in \mathbb{Z}_{>0}$, then the power of $Y_{i, a}$ in $m$ is negative, then the monomial $m$ is called \textit{right-negative}, see \cite{FM01}. For $i\in I, a\in \mathbb{C}^{\times}$, $A_{i,a}^{-1}$ is right-negative. A product of right-negative monomials is right-negative. If $m$ is right-negative and $m'\leq m$, then $m'$ is right-negative, see \cite{FM01}, \cite{Her06}.

\subsection{Minimal affinizations of $U_q\mathfrak{g}$-modules}
Minimal affinizations of $U_q\mathfrak{g}$-modules are $U_q \widehat{\mathfrak{g}} $-modules introduced by Chari in \cite{C95}. They are defined as follows.

Let $\omega: \mathcal{P} \to P^+$ be a map defined by for $m = \prod_{i \in I, a \in \mathbb{C}^{\times}} Y_{i, a}^{u_{i,a}}$, $\omega(m) = \sum_{i\in I, a \in \mathbb{C}^{\times}} u_{i,a} \omega_i$.
\begin{definition}[{\cite{C95}}] Let $V$ be a simple finite-dimensional $U_q\mathfrak{g}$-module, a simple finite-dimensional $U_q \widehat{\mathfrak{g}} $-module $L(m)$ is said to be an affinization of $V$ if $\omega(m)$ is the highest weight of $V$.
\end{definition}

Let $V$ be a finite-dimensional $U_q\mathfrak{g}$-module and $\lambda \in P$. The multiplicity of the simple $U_q\mathfrak{g}$-module of highest weight $\lambda$ in $V$ is denoted by $m_{\lambda}(V)$.  Two affinizations are said to be equivalent if they are isomorphic as $U_q\mathfrak{g}$-modules. Let $\mathcal{Q}_V$ be the equivalence classes of affinizations of $V$. The equivalence class of an affinization $L$ of $V$ is denoted by $[L] \in \mathcal{Q}_V$. Let $[L], [L'] \in \mathcal{Q}_V$. We write $[L] \leq [L']$ if and only if for all $ \mu \in P $, either $m_{\mu}(L) \leq m_{\mu}(L')$ or there exists $\nu > \mu$ such that $m_{\nu}(L) < m_{\mu}(L')$. Then ``$\leq $" defines a partial ordering on $\mathcal{Q}_V$.

\begin{definition}[{\cite{C95}}]
A minimal affinizations of $V$ is a minimal element of $\mathcal{Q}_V$ with respect to the partial ordering ``$\leq $".
\end{definition}

In the case of $\mathfrak{g} = \mathfrak{sl}_2$, the minimal affinizations are evaluation modules.

From now on, we fix an $a\in \mathbb{C}^{\times}$ and denote $i_s = Y_{i, aq^s}$, $i \in I$, $s \in \mathbb{Z}$. Let $\lambda = k_1 \omega_1 + \cdots k_n \omega_n$, $k_1, \ldots, k_n \in \mathbb{Z}_{\geq 0}$, and let $V(\lambda)$ be the simple $U_q \mathfrak{g}$-module with highest weight $\lambda$. Without loss of generality, we may assume that in type $A_n$ a simple $U_q \widehat{\mathfrak{g}} $-module $L(m_+)$ is a minimal affinization of $V(\lambda)$ if and only if $m_+$ is one of the following monomials:
\begin{align}
M^{(s)}_{k_{1},k_{2},\ldots,k_{n}} & =\prod_{j=1}^{n}\left(\prod_{i_{j}=0}^{k_{j}-1}j_{s+2\sum^{j-1}_{p=1}{k_p}+2i_{j}+(j-1)}\right), \label{minimal affinizations An I} \\
\widetilde{M}^{(s)}_{k_{1},k_{2},\ldots,k_{n}} & =\prod_{j=1}^{n}\left(\prod_{i_{j}=0}^{k_{j}-1}j_{-s-2\sum^{j-1}_{p=1}{k_p}-2i_{j}-(j-1)}\right), \label{minimal affinizations An II}
\end{align}
where $s\in\mathbb{Z}$, see \cite{CP96a}. Similarly, we may assume that in type $B_n$, a simple $U_q \widehat{\mathfrak{g}} $-module $L(m_+)$ is a minimal affinization of $V(\lambda)$ if and only if $m_+$ is one of the following monomials:
\begin{align}
M_{k_{1},k_{2},\ldots,k_{n}}^{(s)} & =\left(\prod_{j=1}^{n-1}
\left(\prod_{i_{j}=0}^{k_{j}-1} j_{s+4\sum_{p=1}^{j-1}k_{p}+4i_{j}+2j-2}\right)\right)\prod_{i_{n}=0}^{k_{n}-1}n_{s+4\sum_{p=1}^{n-1}k_{p}+2i_{n}+2n-3}, \label{minimal affinizations Bn I}  \\
\widetilde{M}_{k_{1},k_{2},\ldots,k_{n}}^{(s)} & =\left(\prod_{j=1}^{n-1}
\left(\prod_{i_{j}=0}^{k_{j}-1} j_{-s-4\sum_{p=1}^{j-1}k_{p}-4i_{j}-2j+2}\right)\right)\prod_{i_{n}=0}^{k_{n}-1}n_{-s-4\sum_{p=1}^{n-1}k_{p}-2i_{n}-2n+3}, \label{minimal affinizations Bn II}
\end{align}
where $s\in\mathbb{Z}$, see \cite{CP95b}. We denote $A_{i,aq^{s}}^{-1}$ by $A_{i,s}^{-1}$. We use $\mathcal{M}^{(s)}_{k_{1},k_{2},\ldots,k_{n}}$ (resp. $\widetilde{\mathcal{M}}_{k_{1},k_{2},\ldots,k_{n}}^{(s)})$ to denote the irreducible finite-dimensional $U_{q}\widehat{\mathfrak{g}}$-module with highest $l$-weight $M^{(s)}_{k_{1},k_{2},\ldots,k_{n}}$ (resp. $\widetilde{M}_{k_{1},k_{2},\ldots,k_{n}}^{(s)})$, where $k_{1},\ldots k_{n}\in  \mathbb{Z}_{\geq 0}$.

\begin{theorem}[Theorem 3.8, {\cite{Her07}}] \label{dual modules are special and antispecial}
In the case of type $A_{n}$ (resp. type $B_{n}$), the modules $\mathcal{M}_{k_{1},\ldots,k_{n}}^{(s)}$ and $\widetilde{\mathcal{M}}_{k_{1},\ldots,k_{n}}^{(s)}$, $s\in \mathbb{Z}$, $k_{1},\ldots, k_{n}\in  \mathbb{Z}_{\geq 0}$ are special and anti-special.
\end{theorem}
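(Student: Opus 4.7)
The plan is to prove specialness first and then derive anti-specialness by a duality argument. For specialness of $\mathcal{M}^{(s)}_{k_1,\ldots,k_n}$, I would exploit the fact that every monomial $m \in \mathscr{M}(\mathcal{M}^{(s)}_{k_1,\ldots,k_n})$ lies in $M^{(s)}_{k_1,\ldots,k_n} \cdot \mathcal{Q}^-$, together with the right-negativity behaviour of the $A_{i,a}^{-1}$. Writing $m = M^{(s)}_{k_1,\ldots,k_n} \prod_{i,r} A_{i,r}^{-c_{i,r}}$ with $c_{i,r} \geq 0$ and at least one $c_{i,r} > 0$, I would locate the largest spectral index $r_0$ for which some $c_{i_0,r_0}$ is positive and expand $A_{i_0,r_0}^{-1}$ explicitly: this produces a factor of the form $Y_{j,aq^{r_0+d_{ij}}}^{-1}$ (or shifted analogously in type $B_n$). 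Because the highest monomial $M^{(s)}_{k_1,\ldots,k_n}$ has a completely explicit support in spectral parameters given by the formulas in \eqref{minimal affinizations An I}--\eqref{minimal affinizations Bn II}, and because by maximality no other factor $A_{\cdot,r}^{-1}$ with $r > r_0$ occurs in $m$ to cancel this, the factor $Y_{j,aq^{r_0+d_{ij}}}^{-1}$ survives and $m$ is not dominant. Hence the only dominant monomial is $M^{(s)}_{k_1,\ldots,k_n}$ itself.

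To make this rigorous I would proceed by induction on $k_1 + \cdots + k_n$. The base case is that of Kirillov-Reshetikhin modules (a single $k_i$ nonzero), where specialness is classical. For the inductive step, I would exploit the snake-module structure of minimal affinizations in types $A_n$ and $B_n$, where the highest monomial decomposes as a product of monomials corresponding to KR modules with carefully shifted spectral parameters, and use the Frenkel--Mukhin algorithm together with Hernandez's refinements \cite{FM01, Her06} to track which monomials the algorithm can produce. The combinatorial condition in the formulas \eqref{minimal affinizations An I}--\eqref{minimal affinizations Bn II} that consecutive blocks of spectral parameters are separated by exactly the minimal gap is what prevents extra dominant monomials from arising.

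For the anti-specialness statement, I would invoke the involution $\sigma$ of $\mathbb{Z}\mathcal{P}$ defined by $\sigma(Y_{i,a}) = Y_{i,a^{-1}}^{-1}$ and extended multiplicatively. This involution corresponds at the module level to passing to the restricted dual and thereby exchanges dominant with anti-dominant monomials and sends the highest $l$-weight monomial of a simple module to the lowest $l$-weight monomial of its dual. A direct calculation on the generators shows that $\sigma$ maps $M^{(s)}_{k_1,\ldots,k_n}$ to a scalar shift of $\widetilde{M}^{(s')}_{k_1,\ldots,k_n}$ for a suitable $s' \in \mathbb{Z}$, so that $\sigma$ interchanges the families $\mathcal{M}^{(s)}_{k_1,\ldots,k_n}$ and $\widetilde{\mathcal{M}}^{(s')}_{k_1,\ldots,k_n}$ up to a shift of spectral parameter $\tau_b$. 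Applying $\sigma$ to the specialness assertion simultaneously yields that each module in both families is anti-special.

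The main obstacle will be the first step: showing that no non-trivial product of $A_{i,r}^{-1}$'s applied to $M^{(s)}_{k_1,\ldots,k_n}$ yields a dominant monomial. The analysis is combinatorially delicate in type $B_n$ because the short-root index $n$ carries $d_n = 1$ while the other indices carry $d_i = 2$, so the spectral parameter jumps are non-uniform in \eqref{minimal affinizations Bn I}--\eqref{minimal affinizations Bn II}; carefully tracking the $n$-th factor of $A_{n-1,r}^{-1}$ and $A_{n,r}^{-1}$ against the shifts in $M^{(s)}_{k_1,\ldots,k_n}$ will require a careful case analysis. Adapting the right-negativity framework of Frenkel--Mukhin to these non-homogeneous spectral parameters is the technical heart of the proof.
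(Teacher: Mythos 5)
First, a point of comparison: the paper does not prove this statement at all --- it is imported verbatim as Theorem 3.8 of \cite{Her07} --- so there is no in-paper argument to measure yours against. Judged on its own terms, your outline follows the broad strategy of Hernandez's original proof (right-negativity plus the Frenkel--Mukhin algorithm for specialness, an involution of $\mathbb{Z}\mathcal{P}$ for anti-specialness), but the central step has a genuine gap. Writing $m = M^{(s)}_{k_1,\ldots,k_n}\prod A_{i,r}^{-c_{i,r}}$ and expanding the factor $A_{i_0,aq^{r_0}}^{-1}$ with $r_0$ maximal does produce a factor $Y_{i_0, aq^{r_0+d_{i_0}}}^{-1}$ that no other $A^{-1}_{j,r}$ can cancel; but it can perfectly well be cancelled by a positive power of $Y_{i_0, aq^{r_0+d_{i_0}}}$ occurring in the highest monomial $M^{(s)}_{k_1,\ldots,k_n}$ itself. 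Right-negativity of the product of the $A^{-1}$'s says nothing about non-dominance of $m$ once that product is multiplied by a dominant monomial, and deciding exactly which products $\prod A_{i,r}^{-c_{i,r}}$ leave $M^{(s)}_{k_1,\ldots,k_n}\prod A_{i,r}^{-c_{i,r}}$ dominant is the entire content of the theorem (it is precisely this phenomenon that produces the extra dominant monomials listed in Tables \ref{dominant monomials in the M-system of type A} and \ref{dominant monomials in the M-system of type B} when two such characters are multiplied). So your ``first step'' is not a preliminary to be tidied up later; the inductive mechanism that restricts which $A^{-1}$-products can actually occur in $\chi_q(\mathcal{M}^{(s)}_{k_1,\ldots,k_n})$ --- in \cite{Her07} an induction through the $\varphi_i$-decomposition of Proposition \ref{decomposition of a q-character} and reduction to rank-two subdiagrams --- is the proof, and your sketch only gestures at it. Note also that invoking the Frenkel--Mukhin algorithm to certify the full $q$-character before specialness is established requires care, since correctness of the algorithm is only guaranteed under hypotheses of this kind. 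A shorter route, consistent with the tools this paper actually develops, is the Mukhin--Young path description (Theorem \ref{path description of q-characters}): a monomial $\prod_t m(p_t)$ is dominant only if every path $p_t$ is the highest path $p^+_{i_t,k_t}$, which is exactly the argument run in the proof of Theorem \ref{minimal affinizations of types AB are real}.

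On anti-specialness, transporting specialness through an involution of $\mathbb{Z}\mathcal{P}$ is the right idea and is what \cite{Her07} does, but your specific map $\sigma(Y_{i,a})=Y_{i,a^{-1}}^{-1}$ is not the involution that exchanges the two families in type $A_n$: one needs the Dynkin-diagram involution $i\mapsto n+1-i$ together with a spectral shift, as in Lemma \ref{map iota} ($Y_{i,aq^{s}}\mapsto Y_{n-i+1,aq^{n-s+1}}^{-1}$ in type $A_n$, $Y_{i,aq^{s}}\mapsto Y_{i,aq^{4n-s-2}}^{-1}$ in type $B_n$). Moreover, you would still need to justify that your $\sigma$ carries $q$-characters to $q$-characters; that is a nontrivial duality statement from \cite{FM01} and \cite{Her07}, not a formal consequence of the definitions.
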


\subsection{$q$-characters of $U_q \widehat{\mathfrak{sl}}_2$-modules and the Frenkel-Mukhin algorithm}

The $q$-characters of $U_q \widehat{\mathfrak{sl}}_2$-modules are well-understood, see \cite{CP91}, \cite{FR98}. We recall some results which will be used in this paper.

Let $W_{k}^{(a)}$ be the irreducible representation $U_q \widehat{\mathfrak{sl}}_2$ with
highest weight monomial
\begin{align}
X_{k}^{(a)}=\prod_{i=0}^{k-1} Y_{aq^{k-2i-1}},
\end{align}
where $Y_a=Y_{1, a}$. Then the $q$-character of $W_{k}^{(a)}$ is given by
\begin{align} \label{q-characters of Uqsl_2 KR module}
\chi_q(W_{k}^{(a)})=X_{k}^{(a)} \sum_{i=0}^{k} \prod_{j=0}^{i-1} A_{aq^{k-2j}}^{-1},
\end{align}
where $A_a=Y_{aq^{-1}}Y_{aq}$.

For $a\in \mathbb{C}^{\times}, k\in \mathbb{Z}_{\geq 1}$, the set $\Sigma_{k}^{(a)}=\{aq^{k-2i-1}\}_{i=0, \ldots, k-1}$ is called a \textit{string}. Two strings $\Sigma_{k}^{(a)}$ and $\Sigma_{k'}^{(a')}$ are said to be in \textit{general position} if the union $\Sigma_{k}^{(a)} \cup \Sigma_{k'}^{(a')}$ is not a string or $\Sigma_{k}^{(a)} \subset \Sigma_{k'}^{(a')}$ or $\Sigma_{k'}^{(a')} \subset \Sigma_{k}^{(a)}$.

Denote by $L(m_+)$ the irreducible $U_q \widehat{\mathfrak{sl}}_2$-module with
highest weight monomial $m_+$. Let $m_{+} \neq 1$ and $m_+\in \mathbb{Z}[Y_a]_{a\in \mathbb{C}^{\times}}$ be a dominant monomial. Then $m_+$ can be uniquely (up to permutation) written in the form
\begin{align*}
m_+=\prod_{i=1}^{s} \left( \prod_{b\in \Sigma_{k_i}^{(a_i)}} Y_{b} \right),
\end{align*}
where $s$ is an integer, $\Sigma_{k_i}^{(a_i)}, i=1, \ldots, s$, are strings which are pairwise in general position and
\begin{align} \label{q-characters of Uqsl_2 module}
L(m_+)=\bigotimes_{i=1}^s W_{k_i}^{(a_i)}, \qquad \chi_q(L(m_+))=\prod_{i=1}^s
 \chi_q(W_{k_i}^{(a_i)}).
\end{align}

For $j\in I$, let
\begin{align*}
\beta_j : \mathbb{Z}[Y_{i, a}^{\pm 1}]_{i\in I; a\in \mathbb{C}^{\times}} \to \mathbb{Z}[Y_{a}^{\pm 1}]_{a\in \mathbb{C}^{\times}}
\end{align*}
be the ring homomorphism such that for all $a\in \mathbb{C}^{\times}$, $Y_{k, a} \mapsto 1$ for $k\neq j$ and $Y_{j, a} \mapsto Y_{a}$.

Let $V$ be a $U_q \widehat{\mathfrak{g}}$-module. Then $\beta_i(\chi_q(V))$, $i\in \{1, 2, \ldots, n\}$, is the $q$-character of $V$ considered as a $U_{q_i}\widehat{\mathfrak{sl}}_2$-module.

In some situation, we can use the $q$-characters of $U_q \widehat{\mathfrak{sl}}_2$-modules to compute the $q$-characters of $U_q \widehat{\mathfrak{g}}$-modules for arbitrary $\mathfrak{g}$,  see Section 5 in \cite{FM01}. The corresponding algorithm is called the Frenkel-Mukhin algorithm. The Frenkel-Mukhin algorithm recursively computes the minimal possible $q$-character which contains $m_+$ and is consistent when restricted to $U_{q_i}\widehat{\mathfrak{sl}}_2$, $i\in \{1, 2, \ldots, n\}$.

In some cases, the Frenkel-Mukhin algorithm doesn't return all terms in the $q$-character of a module. There are some counter-examples given in \cite{NN11}. However, Frenkel-Mukhin algorithm produces the correct $q$-characters of modules in many cases. In particular, if a module $L(m_+)$ is special, then the Frenkel-Mukhin algorithm applied to $m_+$, see \cite{FM01}, produces the correct $q$-character $\chi_q(L(m_+))$.

We will need the following proposition from \cite{HL10}.

\begin{proposition}[{Proposition 5.9 in \cite{HL10}, Proposition 3.1 in \cite{Her05}}] \label{decomposition of a q-character}
Let $V$ be a $U_q \hat{\mathfrak{g}}$-module and fix $i \in I$. Then there is a unique
decomposition of $\chi_q(V)$ as a finite sum
\begin{align}
\chi_q(V) = \sum_{m \in \mathcal{P}_{i, +}} \lambda_m \varphi_i(m),
\end{align}
and the $\lambda_m$ are non-negative integers.
\end{proposition}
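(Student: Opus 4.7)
The plan is to prove existence and uniqueness of the decomposition simultaneously by induction on the (finite) set of $i$-dominant monomials appearing in $\chi_q(V)$, ordered by a total order refining the partial order (\ref{partial order of monomials}).

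First I would pin down $\varphi_i(m)$ explicitly. For an $i$-dominant monomial $m$, regard $\beta_i(m)$ as a dominant $U_{q_i}\widehat{\mathfrak{sl}}_2$ monomial; by formula (\ref{q-characters of Uqsl_2 module}), $\chi_q(L(\beta_i(m))) = \beta_i(m)\cdot P$ for some polynomial $P$ in variables $A_b^{-1}$. Define $\varphi_i(m) = m\cdot\widetilde{P}$, where $\widetilde{P}$ is obtained from $P$ by the substitution $A_b^{-1}\mapsto A_{i,b}^{-1}$. Since each $A_{i,b}^{-1}$ factor strictly decreases a monomial in the partial order (\ref{partial order of monomials}), the key leading-term property
\[
\varphi_i(m) = m + (\text{strictly smaller monomials})
\]
holds. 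This already gives uniqueness: for any decomposition $\sum_{m'}\lambda_{m'}\varphi_i(m')$ equal to $\chi_q(V)$, a maximal $i$-dominant $m$ appearing on either side has $\lambda_m$ forced to equal the coefficient of $m$ in $\chi_q(V)$, and subtracting that term reduces the problem.

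For existence, let $m$ be maximal and $\lambda_m$ its coefficient in $\chi_q(V)$. The representation-theoretic heart of the argument is to decompose $V$, viewed as a module for the subalgebra $\widehat{U}_i \subset U_q\widehat{\mathfrak{g}}$ generated by $x_{i,r}^{\pm}$ and $\phi_{i,r}^{\pm}$ (a homomorphic image of $U_{q_i}\widehat{\mathfrak{sl}}_2$), into composition factors. Each simple factor has $\widehat{\mathfrak{sl}}_2$-character $\chi_q(L(\beta_i(m')))$ under $\beta_i$ for some $i$-dominant $m'$; moreover its full $l$-weight string in $\mathcal{P}$ has the shape $m' \to m'A_{i,b_1}^{-1} \to \cdots$ with the $b_k$ dictated by (\ref{q-characters of Uqsl_2 module}), because the $\phi_{j,r}^{\pm}$-eigenvalues for $j\ne i$ shift under the $x_{i,r}^{\pm}$-action exactly by the $A_{i,?}$-factors mandated by the Drinfeld relations. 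Hence each such factor contributes $\varphi_i(m')$ to $\chi_q(V)$; summing with multiplicities proves the claimed decomposition, subtracting $\lambda_m\varphi_i(m)$ removes the $m$-string cleanly without creating negative coefficients, and induction on the number of $i$-dominant monomials finishes the argument.

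The main obstacle I anticipate is the representation-theoretic verification that each $\widehat{U}_i$-composition factor of $V$ has its full $l$-weight data in $\mathcal{P}$ (not merely its image under $\beta_i$) rigidly determined, i.e., that the spectator factors $Y_{j,a}$ with $j\ne i$ along an $i$-string are forced by the interaction between $x_{i,r}^{\pm}$ and $\phi_{j,s}^{\pm}$. Combinatorially this amounts to showing that the monomials of $\chi_q(V)$ are partitioned into $i$-strings in bijection with the $\widehat{U}_i$-composition factors, so that the lift from $\beta_i(\chi_q(V))$ to $\chi_q(V)$ is unambiguous; once this is in place, non-negativity and integrality of the $\lambda_m$ are automatic from the composition series.
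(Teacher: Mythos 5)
This proposition is quoted in the paper from Proposition 5.9 of \cite{HL10} and Proposition 3.1 of \cite{Her05}; the paper itself supplies no proof, so the comparison can only be with the argument in those references. Your sketch follows essentially the same route as theirs: uniqueness from the triangularity $\varphi_i(m)=m+(\text{strictly smaller terms})$ with respect to the partial order (\ref{partial order of monomials}) (which in fact shows the $\varphi_i(m)$, $m\in\mathcal{P}_{i,+}$, are linearly independent, so uniqueness holds for arbitrary coefficients), and existence from restricting $V$ to the subalgebra $\widehat{U}_i$ generated by the $x^{\pm}_{i,r}$ and $\phi^{\pm}_{i,\pm r}$ and taking a composition series, each factor contributing one $\varphi_i(m')$ with multiplicity, whence the $\lambda_m$ are non-negative integers.

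The one place where your write-up stops short of a proof is exactly the point you flag at the end: the claim that along an $i$-string the full $l$-weights in $\mathcal{P}$ (including the $Y_{j,a}$ with $j\neq i$) are rigid, i.e.\ that consecutive $l$-weights of a $\widehat{U}_i$-composition factor differ by multiplication by a single $A_{i,b}^{-1}$ and by nothing else. This is not automatic from the $\mathfrak{sl}_2$-theory alone; it is the content of the $\widehat{\mathfrak{sl}}_2$-reduction lemma of \cite{FR98} and \cite{FM01} (and of Lemma 3.4 in \cite{Her05}), proved from the Drinfeld relation expressing $\phi_j^{\pm}(u)\,x_i^{\pm}(v)\,\phi_j^{\pm}(u)^{-1}$ as a rational multiple of $x_i^{\pm}(v)$, which forces the $j$-components of the $l$-weight to change by precisely the $Y_{j,\cdot}^{-1}$-factors occurring in $A_{i,b}$. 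If you either prove this computation or cite it explicitly, your argument is complete and coincides with the one in the cited sources; as written, it is a correct plan with that single standard ingredient asserted rather than established.
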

Here $\varphi_i(m)$ ($m \in \mathcal{P}_{i,+}$) is a polynomial defined as follows, see Section 5.2.1 of \cite{HL10}. Let $m \in \mathcal{P}_{i,+}$ be an $i$-dominant monomial. Let $\overline{m}$ be the monomial obtained from $m$ by replacing $Y_{j,a}$ by $Y_a$ if $j = i$ and by $1$ if $j \neq i$. Then the $q$-character $\chi_q(L(\overline{m}))$ of the $U_q \widehat{\mathfrak{sl}}_2$-module $L(\overline{m})$ is given by (\ref{q-characters of Uqsl_2 KR module}), (\ref{q-characters of Uqsl_2 module}). Write $\chi_q(L(\overline{m})) = \overline{m}(1 + \sum_{p} \overline{M}_p)$, where the $\overline{M}_p$ are monomials in the variables $A^{-1}_{a}$ $(a \in \mathbb{C}^{\times})$. Then one sets $\varphi_i(m) := m(1 + \sum_p M_p)$ where each $M_p$ is obtained from the corresponding $\overline{M}_p$ by replacing each variable $A^{-1}_{a}$ by $A^{-1}_{i,a}$.

The following corollary follows from Proposition \ref{decomposition of a q-character}, see \cite{HL10}.
\begin{corollary}[{\cite{HL10}}] \label{Uqsl_2 arguments}
Let $m \in \mathcal{P}_{+}$ and let $mM$ be a monomial of $\chi_q(L(m))$, where $M$ is a monomial in the $A_{j,a}^{-1}$ ($j \in I$). If $M$ contains no variable $A_{i,a}$, then
$mM \in \mathcal{P}_{i,+}$ and $\varphi_i(mM)$ is contained in $\chi_q(L(m))$. In particular, $\varphi_i(m)$ is contained in $\chi_q(L(m))$.
\end{corollary}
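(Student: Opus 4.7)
The plan is to deduce both assertions from Proposition \ref{decomposition of a q-character}. Write the canonical decomposition
\[
\chi_q(L(m)) = \sum_{m' \in \mathcal{P}_{i,+}} \lambda_{m'}\, \varphi_i(m'), \qquad \lambda_{m'} \in \mathbb{Z}_{\geq 0},
\]
and recall that $\varphi_i(m') = m'\bigl(1 + \sum_p M_p\bigr)$, where each $M_p$ is a non-trivial monomial in the variables $A_{i,a}^{-1}$ alone. The crucial observation for what follows is that every non-leading term of $\varphi_i(m')$ carries strictly more $A_{i,a}^{-1}$-weight than $m'$ itself.

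To check that $mM \in \mathcal{P}_{i,+}$, I would inspect the formula defining $A_{j,a}$. For $j \neq i$, the inverse $A_{j,a}^{-1}$ contains only non-negative powers of each $Y_{i,b}$: either $i$ and $j$ are not adjacent in the Dynkin diagram and $A_{j,a}^{-1}$ involves no $Y_{i,\cdot}$ at all, or they are adjacent and the $Y_{i,\cdot}$-factors of $A_{j,a}^{-1}$ occur with positive exponent. Since $m$ is $i$-dominant and $M$ is a product of such $A_{j,a}^{-1}$ with $j \neq i$, the $Y_{i,\cdot}$-exponents of $mM$ are at least those of $m$, hence non-negative.

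For the inclusion $\varphi_i(mM) \subseteq \chi_q(L(m))$, I would compare $A$-multiplicities. The subgroup $\mathcal{Q} \subset \mathcal{P}$ generated by the $A_{j,a}$ is free abelian, so for any element $m_1 \in m\mathcal{Q}^-$ the exponent of each $A_{j,a}^{-1}$ in $m_1/m$ is uniquely defined and non-negative. By hypothesis the $A_{i,a}^{-1}$-exponent of $mM$ with respect to $m$ is zero for every $a$. Now any monomial of $\varphi_i(m')$ has the form $m' M_p$, whose $A_{i,a}^{-1}$-exponent is the sum of that of $m'/m$ and that of $M_p$; the first summand is non-negative, while the second is strictly positive for some $a$ as soon as $M_p \neq 1$. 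Hence $mM = m' M_p$ forces $M_p = 1$ and $m' = mM$, so the coefficient of $mM$ in $\chi_q(L(m))$ equals $\lambda_{mM}$. Since $mM \in \mathscr{M}(L(m))$, this coefficient is at least one, so $\lambda_{mM} \geq 1$ and $\varphi_i(mM) \subseteq \chi_q(L(m))$. The ``in particular'' statement is the case $M = 1$.

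The one delicate ingredient is the freeness of $\mathcal{Q}$, which is what makes the $A$-exponent bookkeeping above unambiguous; this is standard in the $q$-character literature and is already implicit in the formulation of Proposition \ref{decomposition of a q-character}. Granted that, both claims reduce to extracting the coefficient of a single monomial from a non-negative sum of $\varphi_i$'s.
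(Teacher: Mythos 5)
Your proof is correct, and it follows exactly the route the paper intends: the paper itself gives no argument beyond the remark that the corollary ``follows from Proposition \ref{decomposition of a q-character}'', and your write-up supplies precisely the missing bookkeeping (the sign analysis of the $Y_{i,b}$-exponents in $A_{j,a}^{-1}$ for $j\neq i$, and the comparison of $A_{i,a}^{-1}$-exponents using $\mathscr{M}(L(m))\subset m\mathcal{Q}^-$ together with the freeness of $\mathcal{Q}$). Nothing to object to; this is the standard derivation from the canonical $i$-decomposition as in \cite{HL10}.
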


\subsection{Path description of $q$-characters of types $A_{n}$, $B_{n}$}
We will need the path description of $q$-characters of minimal affinizations of types $A_{n}$, $B_{n}$ which are introduced in Section 5 of \cite{MY12a}, see also Section 3 and Section 6 of \cite{MY12b} to classify dominant monomials in tensor products.

The explicit tableaux formulas of $q$-characters of minimal affinizations of types $A_{n}$, $B_{n}$ are given in \cite{Her07}.

The \textit{length} of $\mathcal{M}^{(s)}_{k_{1},k_{2},\ldots,k_{n}}$ is defined as $k_1+k_2 + \cdots +k_n$.
\begin{theorem}[{Theorem 6.1, \cite{MY12a}}] \label{path description of q-characters}  Suppose that the length of the minimal affinization $\mathcal M^{(s)}_{k_{1},k_{2},\ldots,k_{n}}$ is $L$. Then in the cases of types $A_n$ and $B_n$, we have
\begin{align}
\chi_{q}(\mathcal{M}^{(s)}_{k_{1},k_{2},\ldots,k_{n}})=\sum_{(p_{1},\ldots,p_{L})\in \overline{\varphi}_{(i_{t},k_{t})_{1\leq t\leq L}}}\prod_{t=1}^{L}m(p_{t}).
\end{align}
\end{theorem}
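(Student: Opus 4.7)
The plan is to combine the specialness of minimal affinizations (Theorem \ref{dual modules are special and antispecial}) with the Frenkel--Mukhin algorithm, following the strategy of Mukhin--Young. The idea is to show that the right-hand side is a well-defined Laurent polynomial whose image under each restriction map $\beta_i$ decomposes into a product of $U_{q_i}\widehat{\mathfrak{sl}}_2$-characters of the form (\ref{q-characters of Uqsl_2 module}), and which contains the highest monomial $M^{(s)}_{k_1,\ldots,k_n}$ with multiplicity one and no other dominant monomial. Once this is established, the specialness of $\mathcal{M}^{(s)}_{k_1,\ldots,k_n}$ forces the two sides to coincide, because the Frenkel--Mukhin algorithm applied to a special module returns its genuine $q$-character.

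First I would fix the combinatorial set-up: paths $p$ live in a planar lattice determined by the Dynkin type, each step of $p$ records a factor $Y_{i,aq^r}^{\pm 1}$, and the assignment $p\mapsto m(p)$ sends a path to its associated Laurent monomial. The set $\overline{\varphi}_{(i_t,k_t)_{1\le t\le L}}$ parametrises $L$-tuples of pairwise non-overlapping paths with endpoints dictated by $(i_t,k_t)$. I would check that the unique highest tuple corresponds exactly to $M^{(s)}_{k_1,\ldots,k_n}$, so that the highest monomial appears with multiplicity one; then, using that deviating a path from its highest segment introduces a factor of some $A_{i,s}^{-1}$, I would argue by induction on the number of such ``lowering'' steps that every other monomial produced on the right-hand side is right-negative in the sense of \cite{FM01}, hence non-dominant.

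Second, for each fixed $i\in I$ I would analyse the image $\beta_i$ of the path sum. The key combinatorial claim is that the portions of each path-tuple lying at height $i$ collect into a family of $U_{q_i}\widehat{\mathfrak{sl}}_2$-strings which are pairwise in general position, and that summing over all local choices of how these portions are completed reproduces the explicit Kirillov--Reshetikhin expansion $X_k^{(a)}\sum_{j=0}^{k}\prod_{r=0}^{j-1} A_{aq^{k-2r}}^{-1}$ from (\ref{q-characters of Uqsl_2 KR module}). Combined with Proposition \ref{decomposition of a q-character} and Corollary \ref{Uqsl_2 arguments}, this will show that the path sum is consistent with every $\beta_i$-restriction and therefore coincides with the output of the Frenkel--Mukhin algorithm started at $M^{(s)}_{k_1,\ldots,k_n}$. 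Invoking Theorem \ref{dual modules are special and antispecial} then completes the proof.

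The main obstacle is the local bookkeeping in the second step: matching the non-overlapping condition on path-tuples with the general-position condition on $U_{q_i}\widehat{\mathfrak{sl}}_2$-strings, especially because this matching depends on the type. In type $B_n$ the short node $n$ requires a separate analysis, since $A_{n,a}$ involves a shift by $q$ rather than $q^2$ and the paths at the short node use half-integer spectral shifts; the strings produced by $\beta_n$ then have a different length pattern than those at the long nodes, and the induction on the length $L$ has to be set up carefully to accommodate both cases simultaneously.
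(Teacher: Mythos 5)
You should first note that the paper itself offers no proof of this statement: it is imported verbatim as Theorem 6.1 of \cite{MY12a}, and the surrounding text only unpacks the notation. So the only meaningful comparison is with the Mukhin--Young proof, whose broad strategy your outline does reproduce: exhibit the path sum as a non-negative Laurent polynomial containing $M^{(s)}_{k_1,\ldots,k_n}$ with multiplicity one and no other dominant monomial, verify that for every $i$ it decomposes as a non-negative sum of the polynomials $\varphi_i(m)$ over its $i$-dominant monomials, and then invoke the uniqueness criterion (Theorem 5.1 of \cite{MY12b}, due to Hernandez) or, as you prefer, specialness plus the Frenkel--Mukhin algorithm. Either closing step is legitimate, and in fact once both combinatorial properties are established you do not need Theorem \ref{dual modules are special and antispecial} as an input at all.

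There is, however, a genuine gap in your first step. You propose to show that every non-highest monomial of the path sum is right-negative by induction on the number of lowering steps. This mechanism fails: $A_{i,a}^{-1}$ is right-negative, but $m_+A_{i,a}^{-1}$ generally is not, because right-negativity is not preserved under multiplication by an arbitrary (non-right-negative) monomial such as the highest weight monomial; if every monomial strictly below $m_+$ in a $q$-character were right-negative, every simple module would be special, which is false. What actually excludes dominant monomials here is the \emph{non-overlapping} condition: if some path in the tuple is not the highest path, take the lowest such path; its lower corners contribute factors $Y_{j,l}^{-1}$, and the paths lying strictly above it cannot contribute a compensating $Y_{j,l}$ without violating non-overlapping, so the product monomial is not dominant. (This is exactly the argument the present paper runs, in the opposite direction, in the proof of Theorem \ref{minimal affinizations of types AB are real} and Lemma \ref{set of dominant monomials in terms monomials}.) Separately, your second step --- matching the height-$i$ portions of non-overlapping tuples with $U_{q_i}\widehat{\mathfrak{sl}}_2$-strings in general position, including the half-integer shifts at the short node in type $B_n$ --- is where essentially all of the work of \cite{MY12a} lives; as written it is an announced claim rather than an argument, so the proposal should be regarded as a correct plan with its two load-bearing lemmas still unproved, one of them (right-negativity) needing to be replaced by the corner-cancellation argument above.
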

Now we explain the notations in Theorem \ref{path description of q-characters}, see \cite{MY12a}. A path is a finite sequence of points in the plane $\mathbb{R}^{2}$. In the case of type $A_{n}$, let
\begin{align*}
\mathcal{D}=\{(i,k)\in I\times \mathbb{Z}: i-k \equiv 1 \pmod 2\}.
\end{align*}
For all $(i,k)\in \mathcal{D}$, let
\begin{align*}
\mathfrak{\varphi}_{i,k}=\{ & ((0,y_{0}),(1,y_{1}),\ldots,(n+1,y_{n+1})):\\
& y_{0}=i+k, \ y_{n+1}=n+1-i+k, \text{ and } y_{i+1}-y_{i}\in \{1,-1\}, \  0\leq i\leq n\}.
\end{align*}
The sets $C_{p}^{\pm}$ of upper and lower corners of a path $p=((r,y_{r}))_{0\leq r \leq n+1}\in \mathfrak{\varphi}_{i,k}$ are defined as follows:
\begin{align*}
C^{+}_{p}=\{(r,y_{r})\in p: r\in I, \ y_{r-1}=y_{r}+1=y_{r+1}\},\\
C^{-}_{p}=\{(r,y_{r})\in p: r\in I, \ y_{r-1}=y_{r}-1=y_{r+1}\}.
\end{align*}In the case of type $B_{n}$, let
\begin{align*}
\mathcal{D}=\{(n,2k+1): k\in \mathbb{Z}\}\sqcup \{(i,k)\in I\times \mathbb{Z}: i<n \text{ and } k\equiv0 \pmod 2\}.
\end{align*}
It is written that $(j,l)\in p$ if $(j,l)$ is a point of the path $p$.

Fix an $\varepsilon$, $0<\varepsilon <1/2$, $\mathfrak{\varphi}_{n,l}$ for all $l\in 2\mathbb{Z}+1$ are defined as follows.
For all $l\equiv 3$ mod $4$,
\begin{gather}
\begin{align*}
\mathfrak{\varphi}_{n,l}= \{ & ((0,y_{0}), (2,y_{1}), \ldots, (2n-4,y_{n-2}), (2n-2,y_{n-1}),(2n-1,y_{n})): \\
 & y_{0}=l+2n-1, y_{i+1}-y_{i}\in \{2,-2\}, \  0\leq i\leq n-2, \text{ and } y_{n}-y_{n-1}\in \{1+\epsilon, -1-\epsilon\}\}.
\end{align*}
\end{gather}
For all $l\equiv 1$ mod $4$,
\begin{gather}
\begin{align*}
\mathfrak{\varphi}_{n,l}=& \{((4n-2,y_{0}),(4n-4,y_{1}),\ldots,(2n+2,y_{n-2}), (2n,y_{n-1}),(2n-1,y_{n})): \\
& y_{0}=l+2n-1, \ y_{i+1}-y_{i}\in \{2,-2\}, \   0\leq i\leq n-2,  \text { and } y_{n}-y_{n-1}\in \{1+\epsilon, -1-\epsilon\}\}.
\end{align*}
\end{gather}
For all $(i,k)\in \mathcal{D}$, $i< n$, $\mathfrak{\varphi}_{i,k}$ are defined as follows:
\begin{align*}
\mathfrak{\varphi}_{i,k}=&\{(a_{0},a_{1},\ldots,a_{n},\overline{a}_{n},\ldots,\overline{a}_{1},\overline{a}_{0}):  (a_{0},a_{1},\ldots,a_{N})\in \mathfrak{\varphi}_{n,k-(2n-2i-1)},\\
&  \quad(\overline{a}_{0},\overline{a}_{1},\ldots,\overline{a}_{n})\in \mathfrak{\varphi}_{n,k+(2n-2i-1)},
 \text { and } a_{n}-\overline{a}_{n}= (0,y) \text{ where } y>0\}.
\end{align*}
For all $(i,k)\in \mathcal{D}$, the sets of upper and lower corners $C^{\pm}_{p}$ of a path
\begin{align*}
p=((j_{r},l_{r}))_{0\leq r\leq |p|-1}\in \mathfrak{\varphi}_{i,k},
\end{align*}
where $|p|$ is the number of points in the path $p$, are defined as follows:
\begin{align*}
C^{+}_{p}=&\tau^{-1}\{(j_{r},l_{r})\in p: j_{r}\not \in \{0,2n-1,4n-2\}, \ l_{r-1}>l_{r},l_{r+1}>l_{r}\} \\
& \quad \bigsqcup \{(n,l)\in \mathcal{D}: (2n-1,l-\epsilon)\in p \text{ and }(2n-1,l+\epsilon)\not\in p\},
\end{align*}
\begin{align*}
C^{-}_{p}=&\tau^{-1}\{(j_{r},l_{r})\in p: j_{r}\not \in \{0,2n-1,4n-2\}, \ l_{r-1}<l_{r},l_{r+1}<l_{r}\} \\
& \quad \bigsqcup \{(n,l)\in \mathcal{D}: (2n-1,l-\epsilon)\not\in p \text{ and } (2n-1,l+\epsilon)\in p\},
\end{align*}
where $\tau$ is defined as follows:
\begin{align*}
\tau(i,k) =
\begin{cases}
     (2i, k),  & \text{if } i<n \text{ and } 2n+k-2i\equiv2 \pmod 4,  \\
     (4n-2-2i,k), & \text{if } i<n \text{ and } 2n+k-2i\equiv0 \pmod 4, \\
     (2n-1, k), & \text{if } i=n.
\end{cases}
\end{align*}
A map $m$ sending paths to monomials is defined by
\begin{align} \label{map sending paths to monomials}
m: \bigsqcup_{(i,k)\in \mathcal{D}}\varphi_{i,k} \to \mathbb{Z}[Y^{\pm}_{j,l}]_{(j,l)\in \mathcal{D}}; \quad  p \mapsto m(p)=\prod_{(j,l)\in C^{+}_{p}}Y_{j,l}\prod_{(j,l)\in C^{-}_{p}}Y^{-1}_{j,l}.
\end{align}
We identify a path $p$ with the monomial $m(p)$.

Let $p,p'$ be paths. It is said that $p$ is \textit{strictly above} $p'$ or $p'$ is \textit{strictly below} $p$ if
\begin{align*}
(x,y)\in p \text{ and } (x,z)\in p' \Longrightarrow y< z.
\end{align*}

It is said that a $T$-tuple of paths $(p_{1},\ldots,p_{T})$ is \textit{non-overlapping} if $p_{s}$ is strictly above $p_{t}$ for all $s<t$.

For any $(i_{t},k_{t})\in \mathcal{D}$, $1\leq t\leq T$, $T\in \mathbb{Z}_{\geq1}$, $\overline{\varphi}_{(i_{t},k_{t})_{1\leq t\leq T}}$ is defined by
\begin{align*}
\overline{\varphi}_{(i_{t},k_{t})_{1\leq t\leq T}}=\{(p_{1},\ldots,p_{T}): p_{t}\in \varphi_{i_{t},k_{t}}, \ 1\leq t\leq T, \ (p_{1},\ldots,p_{T})\text { is } \text {non-overlapping} \}.
\end{align*}
By Theorem \ref{path description of q-characters}, the $q$-character of a minimal affinization $\mathcal{M}^{(s)}_{k_{1},k_{2},\ldots,k_{n}}$ of type $A_n$ or $B_n$ with length $M$ is given by a set of $M$-tuples of non-overlapping paths. The paths in each $M$-tuple in this set are non-overlapping, this property is called \textit{non-overlapping property}.

We also need the following notations in this paper. For all $(i,k)\in \mathcal{D}$, let $p^{+}_{i,k}$ be the highest path which is the unique path in $\varphi_{i,k}$ with no lower corners and $p^{-}_{i,k}$ the lowest path which is the unique path in $\varphi_{i,k}$ with no upper corners.

\section{M-systems of types $A_{n}$, $B_{n}$}\label{M-systems of types AB}

In this section, we describe M-systems of types $A_{n}$, $B_{n}$.

\subsection{Neighbouring points}\label{Section neighbouring points}
We recall the concept of \textit{neighbouring points} which is introduced in Section $3$ of \cite{MY12b}.
\begin{figure}[H]
\centerline{
\begin{tikzpicture}
\draw[fill] (1,1)--(2,2)circle (2pt)--(3,3)--(4,4) circle (2pt)--(5,3)circle (2pt)--(6,2)--(7,1);
\draw[fill] (1,3)--(2,2)--(3,1)circle (2pt)--(4,2)--(5,3)--(6,4)--(7,5);
\node [above] at (4,4) {$Y_{i,k}$};
\node [below] at (3,1) {$Y_{i',k'}$};
\node [left]  at (2,2) {$\mathbb{X}_{i,k}^{i',k'}$};
\node [right] at (5,3) {$\mathbb{Y}_{i,k}^{i',k'}$};
\end{tikzpicture}}
\end{figure}

In type $A_n$, let
\begin{align*}
\mathbb{X}_{i,k}^{i',k'}=
\begin{cases}
((\frac{1}{2}(i+k+i'-k'),\frac{1}{2}(i+k-i'+k'))), & k+i > k'-i',\\
\emptyset, & k+i = k'-i',
\end{cases}
\end{align*}
\begin{align*}
\mathbb{Y}_{i,k}^{i',k'}=
\begin{cases}
((\frac{1}{2}(i'+k'+i-k),\frac{1}{2}(i'+k'-i+k))), & k+n+1-i > k'-n-1+i',\\
\emptyset, & k+n+1-i = k'-n-1+i'.
\end{cases}
\end{align*}

In type $B_n$, let
\begin{gather}
\begin{align*}
(\mathbb{X}_{i,k}^{i',k'},\mathbb{Y}_{i,k}^{i',k'})=
\begin{cases}
(B_{i,k}^{i',k'},F_{i,k}^{i',k'}), & i<n,\ 2n+k-2i \equiv 1 \pmod 4, \text{ or } i=n,\ k \equiv 0 \pmod 4,\\
(F_{i,k}^{i',k'},B_{i,k}^{i',k'}), & i<n,\ 2n+k-2i \equiv 3 \pmod 4, \text{ or } i=n,\ k \equiv 2 \pmod 4,
\end{cases}
\end{align*}
\end{gather}
where
\begin{gather}
\begin{align*}
&B_{i,k}^{i',k'}=
\begin{cases}
\emptyset, & i<n,\ i'<n,\ k'-k=2i+2i',\\
((\frac{1}{4}(2i+k+2i'-k'),\frac{1}{2}(2i+k-2i'+k'))), & i<n,\ i'<n,\ k'-k<2i+2i',\\
\emptyset, & i<n,\ i'=n,\ k'-k=2i+2n-1,\\
((\frac{1}{4}(2i+k+2n-1-k'),\frac{1}{2}(2i+k-2n+1+k'))), & i<n,\ i'=n,\ k'-k<2i+2n-1,\\
((n,k'-2n+1+2i')), & i=n,\ i'<n,\\
\emptyset, & i=n,\ i'=n,
\end{cases}\\
&F_{i,k}^{i',k'}=
\begin{cases}
((\frac{1}{4}(2i'+k'+2i-k),\frac{1}{2}(2i'+k'-2i+k))), & i<n,\ i'<n,\ k'-k\ \leq 4n-4-2i-2i',\\
((n,k+2n-1-2i),(n,k'-2n+1+2i')), & i<n,\ i'<n,\ k'-k\ \geq 4n-2i-2i',\\
((n,k+2n-1-2i)), & i<n,\ i'=n,\\
((\frac{1}{4}(2n-1+k+2i'-k'),\frac{1}{2}(2n-1+k-2i'+k'))), & i=n,\ i'<n,\ k'-k<2n-1-2i',\\
\emptyset, & i=n,\ i'<n,\ k'-k=2n-1-2i',\\
((\frac{1}{4}(4n-2+k-k'), \frac{1}{2}(k+k'))), & i=n,\ i'=n.
\end{cases}
\end{align*}
\end{gather}

\subsection{M-systems of types $A_n$ and $B_n$} When we write $\mathcal{M}^{(s-2)}_{0,\ldots,0,\underset{i}{k_{i}},0, \ldots, 0,\underset{j}{k_{j}+1},k_{j+1},\ldots,k_{n}}$, we mean ${k_{i}}$ is in the $i$-th position,  $k_{j}+1$ is in the $j$-th position.
Our first main result in this paper is the following systems which we call M-systems of types $A_n$ and $B_{n}$ respectively.

In the case of type $A_n$, every minimal affinization which is not a Kirillov-Reshetikhin module can be written as
\begin{align}\label{M_2 in type A_n}
\mathcal{M}^{(s-2)}_{0,\ldots, 0,\underset{i}{k_{i}},0,\ldots,0,\underset{j}{k_{j}+1},k_{j+1},\ldots,k_{n}},
\end{align}
where $1 \leq i \leq n-1$, $i < j \leq n$, $k_i \geq 1$, $s\in \mathbb{Z}$, $k_{j}, \ldots, k_{n} \in \mathbb{Z}_{\geq 0}$.

In type $B_n$, every minimal affinization which is not a Kirillov-Reshetikhin module is one of the following modules
\begin{align}
& \mathcal{M}^{(s-4)}_{0,\ldots,0,\underset{i}{k_{i}},0,\ldots,0,\underset{j}{k_{j}+1},k_{j+1},\ldots,k_{n}}, \quad 1 \leq i \leq n-2, \ i < j \leq n-1, \label{M_2 in type B_n 1} \\
& \mathcal{M}^{(s-4)}_{0,\ldots,0,\underset{i}{k_{i}},0,\ldots,0,\underset{n}{k_n+2}}, \quad 1 \leq i \leq n-1,  \label{M_2 in type B_n 2} \\
& \mathcal{M}^{(s-4)}_{0,\ldots,0,\underset{i}{k_{i}},0,\ldots,0,\underset{n}{1}}, \quad 1 \leq i \leq n-1, \label{M_2 in type B_n 3}
\end{align}
where $k_i \geq 1$, $s\in \mathbb{Z}$, $k_{j}, \ldots, k_{n} \in \mathbb{Z}_{\geq 0}$.

In type $A_n$, let $\mathcal{M}_2$ be the minimal affinization (\ref{M_2 in type A_n}). In type $B_n$, let $\mathcal{M}_2$ be one of the minimal affinizations (\ref{M_2 in type B_n 1}), (\ref{M_2 in type B_n 2}), (\ref{M_2 in type B_n 3}).

In type $A_n$, let
\begin{equation}\label{M_1 in type A_n}
\begin{split}
\mathcal{M}_1=\mathcal{M}^{(s)}_{0,\ldots, 0,\underset{i}{k_{i}},0,\ldots,0, \underset{j}{k_{j}}, k_{j+1},\ldots,k_{n}}.
\end{split}
\end{equation}

In type $B_n$, let
\begin{equation}\label{M_1 in type B_n}
\mathcal{M}_1=
\begin{cases}
\mathcal{M}^{(s)}_{0,\ldots,0,\underset{i}{k_{i}},0,\ldots,0,\underset{j}{k_{j}},k_{j+1},\ldots,k_{n}}, & \text{ if $\mathcal{M}_2$ is (\ref{M_2 in type B_n 1})},\\
\mathcal{M}^{(s)}_{0,\ldots,0,\underset{i}{k_{i}},0,\ldots,0,\underset{n}{k_n}}, & \text{ if $\mathcal{M}_2$ is (\ref{M_2 in type B_n 2})},\\
\mathcal{M}^{(s)}_{0,\ldots,0,\underset{i}{k_{i}},0,\ldots,0,\underset{n}{0}}, & \text{ if $\mathcal{M}_2$ is (\ref{M_2 in type B_n 3})}.
\end{cases}
\end{equation}

In type $A_n$, let
\begin{align}\label{M_3 M_4 in type A_n}
\mathcal{M}_3 = \mathcal{M}^{(s-2)}_{0,\ldots, 0,\underset{i}{k_i+1},0,\ldots,0, \underset{j}{k_j}, k_{j+1},\ldots,k_{n}}, \quad
\mathcal{M}_4 = \mathcal{M}^{(s)}_{0,\ldots, 0,\underset{i}{k_i-1},0,\ldots,0,\underset{j}{k_j+1},k_{j+1},\ldots,k_{n}}.
\end{align}

In type $B_n$, let
\begin{equation}\label{M_3 M_4 in type B_n}
\begin{split}
&\mathcal{M}_3 = \begin{cases}
\mathcal{M}^{(s-4)}_{0,\ldots,0,\underset{i}{k_i+1},0,\ldots,0,\underset{j}{k_j},k_{j+1},\ldots,k_{n}}, & \text{ if $\mathcal{M}_2$ is (\ref{M_2 in type B_n 1})},\\
\mathcal{M}^{(s-4)}_{0,\ldots,0,\underset{i}{k_i+1},0,\ldots,0,\underset{n}{k_n}}, & \text{ if $\mathcal{M}_2$ is (\ref{M_2 in type B_n 2})},\\
\mathcal{M}^{(s-4)}_{0,\ldots,0,\underset{i}{k_i+1},0,\ldots,0,\underset{n}{0}}, & \text{ if $\mathcal{M}_2$ is (\ref{M_2 in type B_n 3})},
\end{cases} \\
&\mathcal{M}_4 =
\begin{cases}
\mathcal{M}^{(s)}_{0,\ldots,0,\underset{i}{k_i-1},0,\ldots,0,\underset{j}{k_{j}+1},k_{j+1},\ldots,k_{n}}, & \text{ if $\mathcal{M}_2$ is (\ref{M_2 in type B_n 1})},\\
\mathcal{M}^{(s)}_{0,\ldots,0,\underset{i}{k_i-1},0,\ldots,0,\underset{n}{k_n+2}}, & \text{ if $\mathcal{M}_2$ is (\ref{M_2 in type B_n 2})},\\
\mathcal{M}^{(s)}_{0,\ldots,0,\underset{i}{k_i-1},0,\ldots,0,\underset{n}{1}}, & \text{ if $\mathcal{M}_2$ is (\ref{M_2 in type B_n 3})}.
\end{cases}
\end{split}
\end{equation}

Let
\begin{align*}
&\mathcal{X}_i=\{(i,s+d_i(i+2l-5)): 1 \leq l \leq k_i \},
\end{align*}

and
\begin{align*}
\mathbb{X}_i=\prod_{(i,k)\in \mathcal{X}_i} \mathbb{X}_{i,k}^{i,k+2d_{i}}, \quad \mathbb{Y}_i=\prod_{(i,k)\in \mathcal{X}_i} \mathbb{Y}_{i,k}^{i,k+2d_{i}},
\end{align*}
where $\mathbb{X}_{i,k}^{i,k+2d_{i}}$, $\mathbb{Y}_{i,k}^{i,k+2d_{i}}$ are defined in Section \ref{Section neighbouring points}.

Let
\begin{align}\label{M_5 M_6 in types A_n B_n}
\begin{split}
&\mathcal{M}_5 =L((\prod_{(i,k)\in \mathbb{X}_i}Y_{i, k}) M_1 (M^{(s)}_{0, \ldots, 0, \underset{i}{k_i}, 0, \ldots, 0})^{-1}), \\
&\mathcal{M}_6 =L((\prod_{(i,k)\in \mathbb{Y}_i}Y_{i, k}) M_2 (M^{(s-2d_i)}_{0, \ldots, 0, \underset{i}{k_i}, 0, \ldots, 0})^{-1}),
\end{split}
\end{align}
where $M_1$, $M_2$ are the highest weight monomials of $\mathcal{M}_1$, $\mathcal{M}_2$ respectively.

We have the following theorem.
\begin{theorem}\label{M-systems}
In type $A_n$ (resp. $B_n$), we have the following system of equations
\begin{align}
[\mathcal{M}_1]  [\mathcal{M}_2] = [\mathcal{M}_3] [\mathcal{M}_4] + [\mathcal{M}_5] [\mathcal{M}_6] \label{general form of an equation in S-system}
\end{align}
in the Grothendieck $\mathcal{R}$ of the category $\mathcal{C}$, where $\mathcal{M}_1$ is defined in (\ref{M_1 in type A_n}) (resp. (\ref{M_1 in type B_n})), $\mathcal{M}_2$ is defined in (\ref{M_2 in type A_n}) (resp. (\ref{M_2 in type B_n 1})--(\ref{M_2 in type B_n 3})), $\mathcal{M}_3$, $\mathcal{M}_4$  are defined in (\ref{M_3 M_4 in type A_n}) (resp. (\ref{M_3 M_4 in type B_n})), $\mathcal{M}_5$, $\mathcal{M}_6$  are defined in (\ref{M_5 M_6 in types A_n B_n}).
\end{theorem}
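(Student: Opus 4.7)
The plan is to use the injectivity of the $q$-character homomorphism $\chi_q\colon \mathcal{R}\to \mathbb{Z}\mathcal{P}$ and reduce the theorem to proving the identity
\[
\chi_q(\mathcal{M}_1)\,\chi_q(\mathcal{M}_2) \;=\; \chi_q(\mathcal{M}_3)\,\chi_q(\mathcal{M}_4) \;+\; \chi_q(\mathcal{M}_5)\,\chi_q(\mathcal{M}_6)
\]
in $\mathbb{Z}\mathcal{P}$. Since $\mathcal{M}_1,\mathcal{M}_2,\mathcal{M}_3,\mathcal{M}_4$ are minimal affinizations, they are special and anti-special by Theorem \ref{dual modules are special and antispecial}, and Theorem \ref{irreducible} (to be proved separately) will ensure $\mathcal{M}_5,\mathcal{M}_6$ are also simple. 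In particular each factor on either side of the equation is a genuine simple module, and a classical argument shows that an identity of this shape in $\mathcal{R}$ is equivalent to decomposing the class $[\mathcal{M}_1][\mathcal{M}_2]$ as a sum of two simple tensor product classes.

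The core of the proof is a careful classification of the dominant monomials appearing in $\chi_q(\mathcal{M}_1)\chi_q(\mathcal{M}_2)$. First I would identify $M_1M_2$ with $M_3M_4$ up to explicit factors of the form $A_{i,a}$, and $M_1M_2$ with $M_5M_6$ up to a different set of such factors, using the explicit formulas (\ref{minimal affinizations An I})--(\ref{minimal affinizations Bn II}) together with the definitions of $\mathcal{X}_i,\mathbb{X}_i,\mathbb{Y}_i$ from Section \ref{Section neighbouring points}; this shows that both $M_3M_4$ and $M_5M_6$ lie in $M_1M_2\,\mathcal{Q}^{-}$ and hence are candidate highest monomials of constituents of $\mathcal{M}_1\otimes\mathcal{M}_2$. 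Next I would use the path description of $q$-characters (Theorem \ref{path description of q-characters}) to write $\chi_q(\mathcal{M}_1)$ and $\chi_q(\mathcal{M}_2)$ as sums over non-overlapping tuples of paths. A product of monomials $m(p_1)\cdots m(p_L)\cdot m(p'_1)\cdots m(p'_{L'})$ coming from the two $q$-characters need not be non-overlapping; the crucial combinatorial observation is that among all such concatenations, only two configurations produce dominant monomials, corresponding precisely to $M_3M_4$ (the configuration where the two tuples re-combine without any exchange at the shared corners) and $M_5M_6$ (the configuration where the neighbouring points $\mathbb{X}_{i,k}^{i,k+2d_i}$ or $\mathbb{Y}_{i,k}^{i,k+2d_i}$ get swapped). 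Right-negativity (cf.\ \cite{FM01}, \cite{Her06}) will be used to eliminate all other candidates, since any monomial strictly below $M_3M_4$ or $M_5M_6$ that arises in the product must contain a right-negative factor and hence cannot be dominant.

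Once the only dominant monomials in $\chi_q(\mathcal{M}_1)\chi_q(\mathcal{M}_2)$ are identified as $M_3M_4$ and $M_5M_6$, each with multiplicity one, the decomposition $[\mathcal{M}_1][\mathcal{M}_2]=a[L(M_3M_4)]+b[L(M_5M_6)]$ holds for some non-negative integers $a,b$, and comparing coefficients of these two monomials on both sides forces $a=b=1$. Using the specialness of $\mathcal{M}_3$ and $\mathcal{M}_4$ we identify $L(M_3M_4)$ with $\mathcal{M}_3\otimes\mathcal{M}_4$ at the level of Grothendieck classes (two simple factors with highest weight monomials in general position), and similarly $L(M_5M_6)=\mathcal{M}_5\otimes\mathcal{M}_6$ once Theorem \ref{irreducible} is invoked. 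This yields the desired identity.

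The main obstacle will be the combinatorial step of classifying dominant monomials in the product of two $q$-characters via paths: one must show that any non-overlapping rearrangement of the concatenated tuple that still yields a dominant monomial is forced into one of exactly the two configurations above. This rests on the geometry of upper and lower corners under the map $m$ in (\ref{map sending paths to monomials}) and on the fine structure of $\varphi_{i,k}$, particularly the asymmetric type-$B$ cases involving the short root $\alpha_n$ and the $\varepsilon$-shifted coordinates, where Corollary \ref{Uqsl_2 arguments} will be applied repeatedly to reduce to an $U_{q_i}\widehat{\mathfrak{sl}}_2$-computation.
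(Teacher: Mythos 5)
Your overall strategy (reduce to an identity in $\mathbb{Z}\mathcal{P}$ via injectivity of $\chi_q$, then classify dominant monomials in the products using the path description and right-negativity) is the same as the paper's, but your key combinatorial claim is false, and the argument built on it would not go through. You assert that the only dominant monomials of $\chi_q(\mathcal{M}_1)\chi_q(\mathcal{M}_2)$ are $M_3M_4$ and $M_5M_6$, each with multiplicity one. In fact (Lemma \ref{set of dominant monomials in terms monomials} and Table \ref{dominant monomials in the M-system of type A}) this product contains a whole chain of $k_i+1$ dominant monomials
\begin{align*}
M_1M_2\prod_{0\leq j\leq r}A^{-1}_{i,s+i+2k_{i}-2j-4}, \qquad -1\leq r\leq k_{i}-1,
\end{align*}
of which $M_1M_2=M_3M_4$ is the top one ($r=-1$) and $M_5M_6$ is the bottom one ($r=k_i-1$). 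A concrete counterexample to your claim is the type $A_3$ equation $[1_{-3}1_{-1}][1_{-5}1_{-3}2_{0}]=[1_{-3}2_{0}][1_{-5}1_{-3}1_{-1}]+[2_{-4}2_{-2}2_{0}]$: here $\chi_q(1_{-3}1_{-1})\chi_q(1_{-5}1_{-3}2_0)$ contains the three dominant monomials $1_{-5}1_{-3}^21_{-1}2_0$, $1_{-3}1_{-1}2_{-4}2_0$ and $2_{-4}2_{-2}2_0$. Consequently your deduction $[\mathcal{M}_1][\mathcal{M}_2]=a[L(M_3M_4)]+b[L(M_5M_6)]$ with $a=b=1$ "by comparing coefficients of two monomials" is not available: a priori the left-hand side decomposes into as many as $k_i+1$ simple classes, and for $k_i\geq 2$ your claimed classification actually contradicts the theorem itself, since $\chi_q(\mathcal{M}_3)\chi_q(\mathcal{M}_4)$ alone already has $k_i$ dominant monomials.

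The correct route, which the paper takes, is to classify the dominant monomials of all three products $\chi_q(\mathcal{M}_1)\chi_q(\mathcal{M}_2)$, $\chi_q(\mathcal{M}_3)\chi_q(\mathcal{M}_4)$ ($k_i$ monomials, the chain truncated at $r=k_i-2$) and $\chi_q(\mathcal{M}_5)\chi_q(\mathcal{M}_6)$ (only $M_5M_6$), and then conclude by observing that the two sides of the equation have identical multisets of dominant monomials; equality in $\mathbb{Z}\mathcal{P}$ then follows because an element of $\chi_q(\mathcal{R})$ is determined by its dominant monomials. Note also that this argument never needs to identify the simple constituents of $\mathcal{M}_1\otimes\mathcal{M}_2$, nor does it need Theorem \ref{irreducible} (whose proof in the paper in fact uses the dominant-monomial classification, so invoking it here the way you do risks circularity). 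Finally, your identification of $[L(M_3M_4)]$ with $[\mathcal{M}_3][\mathcal{M}_4]$ via "specialness" is not justified: $\mathcal{M}_3\otimes\mathcal{M}_4$ is not special (its $q$-character has $k_i$ dominant monomials), and its simplicity is precisely the nontrivial content of Theorem \ref{irreducible}, not a consequence of the highest weight monomials being "in general position".
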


We call the systems in Theorem \ref{M-systems} the M-systems of types $A_n$ and $B_n$. Theorem \ref{M-systems} will be proved in Section \ref{proof M-systems}.

\begin{remark}
By M-systems of types $A_n$, $B_n$, the $q$-characters of minimal affinizations of types $A_n$, $B_n$ can be computed recursively using the $q$-characters of Kirillov-Reshetikhin modules.
\end{remark}

\begin{remark}
The equations in the M-systems are different from the equations in the extended T-systems in \cite{MY12b} and the T-systems in \cite{KNS94}, \cite{Her06}. The union of the M-systems of types $A_n$, $B_n$, the dual M-systems of types $A_n$, $B_n$ (defined in Theorem \ref{dual M-systems}), and the T-systems of types $A_n$, $B_n$ is a closed system which contains all minimal affinizations of types $A_n$, $B_n$ (including Kirillov-Reshetikhin modules of types $A_n$, $B_n$).
\end{remark}

\begin{example}\label{eample1}
The following are some equations in the M-system of type $A_3$.
\begin{align*}
&[1_{-1}][1_{-3}2_{0}] = [1_{-3}1_{-1}][2_{0}] + [2_{-2}2_{0}],\\
&[1_{-3}1_{-1}][1_{-5}1_{-3}2_{0}] = [1_{-3}2_{0}][1_{-5}1_{-3}1_{-1}] + [2_{-4}2_{-2}2_{0}],\\
&[2_{-2}][2_{-4}3_{-1}] = [2_{-4}2_{-2}][3_{-1}] + [1_{-3}][3_{-3}3_{-1}],\\
&[2_{-4}2_{-2}][2_{-6} 2_{-4}3_{-1}] = [2_{-4}3_{-1}][ 2_{-6}2_{-4}2_{-2}] + [1_{-5}1_{-3}][3_{-5}3_{-3}3_{-1}],\\
&[1_{-3}][1_{-5}3_{-1}] = [1_{-5}1_{-3}][ 3_{-1}] + [2_{-4}3_{-1}],\\
&[1_{-5}1_{-3}][1_{-7}1_{-5}3_{-1}] = [1_{-5}3_{-1}][ 1_{-7}1_{-5}1_{-3}] + [2_{-6}2_{-4}3_{-1}],\\
&[1_{-5}3_{-1}][1_{-7}2_{-4}3_{-1}] = [1_{-7}1_{-5}3_{-1}][ 2_{-4}3_{-1}] + [ 2_{-6}2_{-4}3_{-1}][3_{-1}],\\
&[1_{-7}1_{-5}3_{-1}][1_{-9}1_{-7}2_{-4}3_{-1}] = [1_{-7}2_{-4}3_{-1}][1_{-9}1_{-7}1_{-5}3_{-1} ] +  [2_{-8}2_{-6}2_{-4}3_{-1}][ 3_{-1}].\\
\end{align*}

The following are some equations in the M-system of type $B_2$.
\begin{align*}
&[1_{-3}][1_{-7}2_{-2}2_{0}] = [1_{-7}1_{-3}][2_{-2}2_{0}] + [2_{-6}2_{-4}2_{-2}2_{0}],\\
&[1_{-7}1_{-3}][1_{-11}1_{-7}2_{-2}2_{0}] = [1_{-7}2_{-2}2_{0}][1_{-11}1_{-7}1_{-3}] +  [2_{-10}2_{-8}2_{-6}2_{-4}2_{-2}2_{0}],\\
&[1_{-1}][1_{-5}2_{0}]=[2_{0}][1_{-5}1_{-1}] + [2_{-4}2_{-2}2_{0}],\\
&[1_{-5}1_{-1}][1_{-9}1_{-5}2_{0}] = [1_{-5}2_{0}][1_{-9}1_{-5}1_{-1}] + [2_{-8}2_{-6}2_{-4}2_{-2}2_{0}].\\
\end{align*}
\end{example}

Moreover, we have the following theorem.
\begin{theorem}\label{irreducible}
The modules in the summands on the right hand side of each equation in Theorem \ref{M-systems} are simple.
\end{theorem}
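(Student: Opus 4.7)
The goal is to show that the two right-hand-side summands $\mathcal{M}_3\otimes\mathcal{M}_4$ and $\mathcal{M}_5\otimes\mathcal{M}_6$ are simple. I would invoke the standard specialness criterion: if the product $\chi_q(V_1)\chi_q(V_2)$ of the $q$-characters of two simple modules contains a unique dominant monomial, then $V_1\otimes V_2\cong L(m_+m_+')$, where $m_+,m_+'$ are the highest monomials. Hence it suffices to prove that $M_3M_4$ (resp.\ $M_5M_6$) is the only dominant monomial in $\chi_q(\mathcal{M}_3)\chi_q(\mathcal{M}_4)$ (resp.\ $\chi_q(\mathcal{M}_5)\chi_q(\mathcal{M}_6)$). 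Since each of $\mathcal{M}_3,\mathcal{M}_4$ is a minimal affinization, Theorem \ref{dual modules are special and antispecial} tells us it is special, and Theorem \ref{path description of q-characters} provides an explicit path description of its $q$-character.

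The central combinatorial tool is the path model: each monomial in $\chi_q(\mathcal{M}_i)$ corresponds to a non-overlapping tuple of paths under the map $m$ of (\ref{map sending paths to monomials}), and its positive/negative factors come from upper/lower corners. A product monomial $m(\underline p)\cdot m(\underline p{\,'})$ is dominant iff every lower corner arising from the paths in $\underline p$ or $\underline p{\,'}$ is cancelled by a matching upper corner from the other tuple. The arithmetic of which upper corners can possibly cancel which lower corners is governed precisely by the neighbouring point sets $\mathbb{X}_{i,k}^{i',k'}$ and $\mathbb{Y}_{i,k}^{i',k'}$ recalled in Section \ref{Section neighbouring points}, together with the non-overlapping constraint.

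I would then carry out a case-by-case analysis according to $(i,j)$ and the Dynkin type. For $\mathcal{M}_3\otimes\mathcal{M}_4$, the spectral shifts in (\ref{M_3 M_4 in type A_n}) and (\ref{M_3 M_4 in type B_n}) are designed so that the ``interface'' between the two factors sits exactly at the boundary case of neighbouring points: any attempt to move below the highest path in either factor produces a lower corner at a position whose only possible cancelling partner would force the paths to overlap, contradicting the non-overlapping property. Thus the only non-overlapping tuple producing a dominant product consists of the highest paths in each factor, giving $M_3M_4$ uniquely. For $\mathcal{M}_5\otimes\mathcal{M}_6$, the highest monomials are obtained from $M_1,M_2$ by replacing the $i$-th block of $k_i$ variables with the neighbouring-point products $\prod_{(i,k)\in\mathbb{X}_i}Y_{i,k}$ and $\prod_{(i,k)\in\mathbb{Y}_i}Y_{i,k}$; these neighbouring-point monomials were constructed precisely to ``close up'' the gap, so the same combinatorial argument on paths—combined, if needed, with the factorisation of $\mathcal{M}_5,\mathcal{M}_6$ into tensor products of smaller minimal affinizations via (\ref{q-characters of Uqsl_2 module})—again forces the only dominant product to be $M_5M_6$.

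The main obstacle will be the type $B_n$ analysis. The paths at the short-root node $n$ involve the $\epsilon$-perturbation and the splitting of $\mathfrak{\varphi}_{i,k}$ into two orientation classes depending on $2n+k-2i\pmod 4$, and the three sub-cases (\ref{M_2 in type B_n 1})--(\ref{M_2 in type B_n 3}) must each be handled. In particular, in case (\ref{M_2 in type B_n 2}) where the $n$-th exponent increases by $2$ and in case (\ref{M_2 in type B_n 3}) where only a single $n$-factor is added, one must verify by direct inspection that the piecewise formulas for $\mathbb{X}_{i,k}^{i',k'}$, $\mathbb{Y}_{i,k}^{i',k'}$ do not accidentally admit extra cancellations between paths of $\mathcal{M}_5$ and $\mathcal{M}_6$. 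Once specialness of both products is established in every case, simplicity follows and we obtain $\mathcal{M}_3\otimes\mathcal{M}_4\cong L(M_3M_4)$ and $\mathcal{M}_5\otimes\mathcal{M}_6\cong L(M_5M_6)$, completing the proof.
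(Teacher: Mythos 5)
Your strategy works for the second summand but breaks down for the first one. The reduction you propose --- ``it suffices to prove that $M_3M_4$ is the only dominant monomial in $\chi_q(\mathcal{M}_3)\chi_q(\mathcal{M}_4)$'' --- rests on a claim that is false whenever $k_i\geq 2$. By the classification in Lemma \ref{set of dominant monomials in terms monomials} (Tables \ref{dominant monomials in the M-system of type A} and \ref{dominant monomials in the M-system of type B}), the product $\chi_q(\mathcal{M}_3)\chi_q(\mathcal{M}_4)$ contains the $k_i$ dominant monomials
\[
M_r=M_3M_4\prod_{0\leq j\leq r}A^{-1}_{i,s+i+2k_i-2j-4},\qquad -1\leq r\leq k_i-2
\]
(in type $A_n$, say), so $\mathcal{M}_3\otimes\mathcal{M}_4$ is not special in general, and no amount of path combinatorics will make these extra dominant monomials disappear. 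A concrete instance is the equation $[1_{-3}1_{-1}][1_{-5}1_{-3}2_0]=[1_{-3}2_0][1_{-5}1_{-3}1_{-1}]+[2_{-4}2_{-2}2_0]$ of Example \ref{eample1}: the product $\chi_q(1_{-5}1_{-3}1_{-1})\,\chi_q(1_{-3}2_0)$ has two dominant monomials.

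What is needed instead, and what the paper does, is to show that none of the extra dominant monomials $M_r$, $0\leq r\leq k_i-2$, is the highest $l$-weight of a composition factor of $\mathcal{M}_3\otimes\mathcal{M}_4$. Concretely, one exhibits a monomial that must occur in $\chi_q(L(M_r))$ --- for $r=0$ the monomial $n_1=M_3M_4A^{-2}_{i,s+i+2k_i-4}$, which lies in $\chi_q(L(M_0))$ by Corollary \ref{Uqsl_2 arguments} --- and shows it cannot be written as $m_1m_2$ with $m_1\in\chi_q(M_3)$ and $m_2\in\chi_q(M_4)$: the only candidate factorization would force $M_3A^{-1}_{i,s+i+2k_i-4}\in\chi_q(M_3)$, which the Frenkel--Mukhin algorithm rules out. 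Hence $\chi_q(L(M_r))\nsubseteq\chi_q(M_3)\chi_q(M_4)$, every composition factor is isomorphic to $L(M_3M_4)$, and since $M_3M_4$ occurs with multiplicity one the tensor product is simple. Your treatment of $\mathcal{M}_5\otimes\mathcal{M}_6$ via uniqueness of the dominant monomial and the path/neighbouring-point combinatorics is consistent with the paper (this is exactly the content of the last column of the tables), but without the additional argument above your proof of simplicity of $\mathcal{M}_3\otimes\mathcal{M}_4$ does not go through.
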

Theorem \ref{irreducible} will be prove in Section \ref{proof irreducible}.

\subsection{The m-systems of types $A_n$, $B_{n}$} \label{m-systems Uqg}
Let
\[
\mathfrak{m}_i = \res(\mathcal{M}_{i})
\]
be the restriction of $\mathcal{M}_{i}$ to $U_q \mathfrak{g}$, where $1\leq i \leq 6$. Let $\chi(M)$ be the character of a $U_q \mathfrak{g}$-module $M$. We obtain a system of equations which we called the m-system of type $A_n$ (resp. $B_n$) from Theorem \ref{M-systems}:
\begin{gather}
\begin{split}
\chi( \mathfrak{m}_{1} ) \chi( \mathfrak{m}_{2} )=\chi( \mathfrak{m}_{3} )\chi( \mathfrak{m}_{4} )+\chi( \mathfrak{m}_{5} ) \chi( \mathfrak{m}_{6}).
\end{split}
\end{gather}

\section{Relation between M-systems and cluster algebras} \label{relation between M-systems and cluster algebras}
In this section, we will show that the equations in the M-system of type $A_{n}$ (resp. $B_{n}$) correspond to mutations in some cluster algebra $\mathscr{A}$ (resp. $\mathscr{A}'$) introduced in \cite{HL13}. Moreover, every minimal affinization in the M-system of type $A_{n}$ (resp. $B_{n}$) corresponds to a cluster variable in the cluster algebra $\mathscr{A}$ (resp. $\mathscr{A}'$).

\subsection{Definition of cluster algebras $\mathscr{A}$ and $\mathscr{A}'$}\label{definition of cluster algebra A}

We recall the definition of the cluster algebras introduced in \cite{HL13} in the following. Let $\widetilde{V}=I \times \mathbb{Z}$ and let $\widetilde{\Gamma}$ be a quiver with the vertex set $\widetilde{V}$ whose arrows are given by $(i,r) \to (j,s)$ if and only if $b_{ij} \neq 0$ and $s = r + b_{ij}$, where $B = (b_{ij})_{i,j \in I} = DC$ is defined in Section \ref{definition of quantum affine algebras}.

It is shown in \cite{HL13} that $\widetilde{\Gamma}$ has two isomorphic components. Let $\Gamma$ be one of the components and let $V$ be the set of vertices of $\Gamma$. Let $\psi$ be a function given by $\psi(i, r) = (i, d_i)$, $(i, r) \in V$. Let $W \subset I \times \mathbb{Z}$ be the image of $V$ under the map $\psi$ and let $G$ be the same quiver as $\Gamma$ but with vertices labelled by $W$. Let $W^-=W \cap (I \times \mathbb{Z}_{\leq 0})$ and let $Q$ be the full subquiver of $G$ with vertex set $W^-$.

Let $\mathbf{z}^-=\{z_{i,r}: (i,r)\in W^{-}\}$ and let $\mathscr{A}$ be the cluster algebra defined by the initial seed $(\mathbf{z}^-, Q)$. For convenience, we denote the quiver $Q$ and the cluster algebra $\mathscr{A}$ in the case of type $B_n$ by $Q'$ and $\mathscr{A}'$ respectively.

In order to have the correspondence between M-systems and cluster algebras, we use the following notations for the cluster variables in the initial seed. In the case of type $A_{n}$, let
\begin{align}\label{at}
{\bf m} =
     \{ m^{(-2k_{i}-i+3)}_{0,\ldots, 0,\underset{i}{k_{i}},0, \ldots, 0}\mid i \text{ is } \text{even}, \ k_{i} \in \mathbb{Z}_{\geq0} \}\cup
     \{ m^{(-2k_{i}-i+2)}_{0,\ldots, 0,\underset{i}{k_{i}},0, \ldots, 0}\mid i \text { is } \text{odd}, \ k_{i} \in \mathbb{Z}_{\geq0} \}.
\end{align}
In the case of type $B_{n}$, let ${\bf m}'={\bf m}_{1}\cup {\bf m}_{2}$, where
\begin{align}\label{bt1}
{\bf m}_{1} =\{ m^{(-2n-2k_{n}+5)}_{0,\ldots 0,\underset{n}{k_{n}}}\mid k_{n}\in \mathbb{Z}_{\geq0} \},
\end{align}
\begin{align}\label{bt2}
{\bf m}_{2} =\{ m^{(-4k_{i}-2i+3)}_{0,\ldots, 0,\underset{i}{k_{i}},0, \ldots, 0}, m^{(-4k_{i}-2i+5)}_{0,\ldots, 0,\underset{i}{k_{i}},0, \ldots, 0} \mid i\in \{1,\ldots,n-1\}, \ k_{i} \in \mathbb{Z}_{\geq0} \}.
\end{align}

The cluster algebra in the case of type $A_{n}$ (resp. $B_n$) is the cluster algebra defined by the initial seed $({\bf m}, Q)$ (resp. $({\bf m}', Q')$). Here we identify ${\bf m}$ (resp. ${\bf m}'$) with ${\bf z}^-$ as follows. Let $(i, r) \in W^-$ and $\delta_{i,n}$ the Kronecker delta. In the case of type $A_n$, we identify $m^{(r-i+1)}_{0,\ldots, 0,\underset{i}{k_{i}},0, \ldots, 0}$ with $z_{i, r}$. In the case of type $B_n$, we identify $m^{(r-2i+2+\delta_{i,n})}_{0,\ldots, 0,\underset{i}{k_{i}},0, \ldots, 0}$ with $z_{i, r}$.

We have the following theorem.
\begin{theorem} \label{minimal affinizations correspond to cluster variables}
Every minimal affinization in the M-system of type $A_n$ (resp. $B_n$) corresponds to a cluster variable in $\mathscr{A}$ (resp. $\mathscr{A}'$) defined in Section \ref{definition of cluster algebra A}.
\end{theorem}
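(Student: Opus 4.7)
The plan is to build an explicit mutation sequence in $\mathscr{A}$ (resp.\ $\mathscr{A}'$), starting from the initial seed $(\mathbf{m},Q)$ (resp.\ $(\mathbf{m}',Q')$), whose successive exchange relations are precisely the equations of the M-system in Theorem~\ref{M-systems}, so that every minimal affinization shows up as a cluster variable along the way. The identification made just before Theorem~\ref{minimal affinizations correspond to cluster variables} already puts the Kirillov--Reshetikhin modules $\mathcal{M}^{(s)}_{0,\ldots,0,\underset{i}{k_i},0,\ldots,0}$ of types $A_n$ and $B_n$ in the initial cluster, so these will serve as the base case.

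For the inductive step I would order the non-KR minimal affinizations of the form (\ref{M_2 in type A_n}) (resp.\ (\ref{M_2 in type B_n 1})--(\ref{M_2 in type B_n 3})) by the number of nonzero entries in $(k_1,\ldots,k_n)$ beyond the leftmost one, and then by $\sum k_j$. Given such an $\mathcal{M}_2$, the M-system equation
\[
[\mathcal{M}_1][\mathcal{M}_2] \;=\; [\mathcal{M}_3][\mathcal{M}_4] + [\mathcal{M}_5][\mathcal{M}_6]
\]
has the shape of a cluster exchange relation with $\mathcal{M}_1,\mathcal{M}_2$ playing the role of the exchange pair. The plan is to exhibit a cluster $\Sigma_{\mathcal{M}_2}$ in $\mathscr{A}$ (resp.\ $\mathscr{A}'$) that contains $\mathcal{M}_1,\mathcal{M}_3,\mathcal{M}_4,\mathcal{M}_5,\mathcal{M}_6$ (all of which are obtained by induction, as each of $\mathcal{M}_3,\mathcal{M}_4,\mathcal{M}_5,\mathcal{M}_6$ is either a KR module or a minimal affinization strictly earlier in the ordering), has an arrow configuration around the vertex occupied by $\mathcal{M}_1$ whose incoming/outgoing monomials match $[\mathcal{M}_3][\mathcal{M}_4]$ and $[\mathcal{M}_5][\mathcal{M}_6]$, and in which the mutation $\mu_{\mathcal{M}_1}$ replaces $\mathcal{M}_1$ with $\mathcal{M}_2$. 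Combining (\ref{general form of an equation in S-system}) with the exchange relation (\ref{exchange relation}) then forces the mutated variable to equal $[\mathcal{M}_2]$, placing $\mathcal{M}_2$ in $\mathscr{A}$ (resp.\ $\mathscr{A}'$).

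Concretely, I would construct $\Sigma_{\mathcal{M}_2}$ from the initial seed by a sequence of mutations at vertices of $Q$ (resp.\ $Q'$) guided by the column $i$ of the highest weight monomial; the structure of the quiver $\Gamma$ defined via $b_{ij}=d_iC_{ij}$ (Section~\ref{definition of cluster algebras of infinite rank} and the paragraphs preceding (\ref{at})) makes the neighbourhood of each vertex $(i,r)$ combinatorially explicit, so the check that the products over incoming and outgoing arrows at the mutated vertex reproduce $[\mathcal{M}_3][\mathcal{M}_4]$ and $[\mathcal{M}_5][\mathcal{M}_6]$ reduces to a shift computation on the spectral parameters. The definitions (\ref{M_3 M_4 in type A_n}), (\ref{M_3 M_4 in type B_n}), (\ref{M_5 M_6 in types A_n B_n}) are tailored so that these shifts are exactly $\pm d_i$ or $\pm 2d_i$, matching the values of $b_{ij}$ in types $A_n$ and $B_n$; this is the coherence check that makes the whole picture work.

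The main obstacle will be the bookkeeping step: producing the mutation sequence that reaches $\Sigma_{\mathcal{M}_2}$ and simultaneously controls the quiver so that, after the sequence, the vertex to be mutated sits in the correct local configuration. In type $B_n$ the three families (\ref{M_2 in type B_n 1})--(\ref{M_2 in type B_n 3}) force separate sub-arguments because the short root index $n$ contributes $d_n=1$ while $d_i=2$ for $i<n$, so the "neighbouring point" formulas $\mathbb{X}_{i,k}^{i',k'}$, $\mathbb{Y}_{i,k}^{i',k'}$ split into the cases displayed in Section~\ref{Section neighbouring points}, and the corresponding mutation sequences have to be presented separately. I would handle these cases by induction on $(i,j,k_i,k_j,\ldots,k_n)$ and rely on the explicit examples in the Appendix as a template; the only nontrivial input beyond bookkeeping is Theorem~\ref{M-systems} itself, which guarantees that the algebraic relation emerging from the mutation is satisfied in $\mathcal{R}$, and hence identifies the new cluster variable with $[\mathcal{M}_2]$.
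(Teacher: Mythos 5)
Your proposal follows essentially the same route as the paper: starting from the initial seed whose cluster variables are identified with the Kirillov--Reshetikhin modules, one constructs explicit column-by-column mutation sequences (the paper's $\seq$ in Sections \ref{mutation sequence in type A_n} and \ref{mutation sequence in type B_n}) and checks that each exchange relation along the way is exactly an equation $m_1'=(m_3m_4+m_5m_6)/m_1$ obtained from the M-system of Theorem \ref{M-systems}, so that each $\mathcal{M}_2$ appears as a cluster variable. Your inductive ordering and the local quiver verification you describe correspond to the paper's recursive mutation of columns and its (likewise only sketched) identification of the incoming/outgoing arrow products with $[\mathcal{M}_3][\mathcal{M}_4]$ and $[\mathcal{M}_5][\mathcal{M}_6]$.
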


We will prove Theorem \ref{minimal affinizations correspond to cluster variables} in Section \ref{proof of Theorem minimal affinizations correspond to cluster variables}.

\subsection{Mutation sequences: type $A_n$ case}\label{mutation sequence in type A_n}
We use the idea of the mutation sequences in \cite{HL13}. In \cite{HL13}, the mutation sequences produce Kirillov-Reshetikhin modules. In the following, the mutation sequences produce minimal affinizations.

When we say that we mutate $``C_{i}"$ of a quiver we mean that we mutate the first vertex in the $i$-th column of the quiver, then we mutate the second vertex in this column, and so on until the vertex at infinity. When we say that we mutate $(C_{i_1}, C_{i_2}, \ldots, C_{i_m} )$, $i_1, \ldots, i_m \in I$, we mean that we first mutate $C_{i_1}$, then we mutate $C_{i_2}$, and so on.
If $k_t = 0$, then ``we mutate $(C_{i_1},C_{i_2},\ldots,C_{i_m})$ $k_{t}$ times" means ``we do not mutate $(C_{i_1},C_{i_2},\ldots,C_{i_m})$".

We use $\emptyset$ to denote the empty mutation sequence and use
\begin{align*}
\prod_{k=1}^{m}(C_{2k}, C_{2k-1}, \ldots, C_{1})
\end{align*}
to denote the mutation sequence
\begin{align*}
(C_{2},C_{1}; C_4, C_3, C_2, C_1; \ldots; C_{2m}, C_{2m-1}, \ldots, C_{1}).
\end{align*}

Let $k_1, k_2, \ldots, k_n \in \mathbb{Z}_{\geq 0}$ and $k_i$ (resp. $k_r$) the first non-zero integer in $k_1, k_2, \ldots, k_n$ from the left (resp. right). Let
\begin{align*}
M_{r}^{(1)} =
\begin{cases}
    \emptyset,  & r=1,2,  \\
    \\
    \prod_{k=1}^{\frac{r-1}{2}}(C_{2k}, C_{2k-1}, C_{2k-2}, \ldots, C_{1}),  & r \equiv 1 \pmod2, r>1,  \\
    \\
     \prod_{k=1}^{\frac{r-2}{2}}(C_{2k},C_{2k-1},C_{2k-2},\ldots,C_{1}) , & r\equiv 0\pmod2, r>2.
\end{cases}
\end{align*}

Let $\seq$ be the mutation sequence: first we mutate $M_r^{(1)}$ starting from the initial quiver $Q$, then we mutate $(C_{r-1},C_{r-2},\ldots,C_{1})$ $k_{r}$ times, and then we mutate $(C_{r-2},C_{r-3},\ldots,C_{1})$ $k_{r-1}$ times; continue this procedure, we mutate $(C_{t-1},C_{t-2},\ldots,C_{1})$ $k_{t}$ times, $t=r-2, r-3, \ldots, i+1$. After we mutate $Q$ following the mutation sequence $\seq$, we obtain the minimal affinization (\ref{M_2 in type A_n}) at the $k_i$-th vertex of $i$-th column.

\subsection{Mutation sequences: type $B_n$ case}\label{mutation sequence in type B_n}
In the following, we define mutation sequences which produce the minimal affinizations (\ref{M_2 in type B_n 1}), (\ref{M_2 in type B_n 2}), (\ref{M_2 in type B_n 3}).

Let $k_1, k_2, \ldots, k_n \in \mathbb{Z}_{\geq 0}$ and $k_i$ (resp. $k_r$) the first non-zero integer in $k_1, k_2, \ldots, k_n$ from the left (resp. right). Let
\begin{align*}
N_{n}^{(1)} =
\begin{cases}
    \emptyset,   & n=2,  \\
    \\
    \prod_{k=0}^{\frac{n-3}{2}}(C_{2n-2k-1}, C_{2n-2k}, \ldots, C_{2n-1}),   & n \equiv 1 \pmod2, n\geq3, \\
    \\
    \prod_{k=0}^{\frac{n-4}{2}}(C_{2n-2k-2}, C_{2n-2k-1},\ldots, C_{2n-1}), & n\equiv 0  \pmod 2, n > 2,
     \end{cases}
\end{align*}

\begin{align*}
N_{n}^{(2)} =
\begin{cases}
    \emptyset,   &  n=2,3, \\
    \\
    \prod_{k=0}^{\frac{n-5}{2}}(C_{2k+2}, C_{2k+1}, \ldots, C_{1}),   & n \equiv 1 \pmod2, n>3, \\
    \\
    \prod_{k=0}^{\frac{n-4}{2}}(C_{2k+1}, C_{2k},\ldots, C_{1}), & n\equiv 0  \pmod 2, n>2,
     \end{cases}
\end{align*}

\begin{align*}
N_{n,r}^{(3)} = \begin{cases}
    \emptyset,    &    r=1,  \\
    \emptyset, & n \equiv 0 \pmod 2, r=2, \\
    \prod_{k=0}^{\frac{r-3}{2}}(C_{2n-2k-1}, C_{2n-2k}, \ldots, C_{2n-1}), & n \equiv 1 \pmod 2,  r\equiv1 \pmod 2,  r>1,\\
    \prod_{k=0}^{\frac{r-2}{2}}(C_{2n-2k-1}, C_{2n-2k}, \ldots, C_{2n-1}), & n \equiv 1 \pmod 2, r\equiv0 \pmod 2,  r\geq2,\\
   \prod_{k=0}^{\frac{r-4}{2}}(C_{2n-2k-2}, C_{2n-2k-1},\ldots, C_{2n-1}), & n \equiv 0 \pmod 2, r\equiv0 \pmod 2,       r>2, \\
    \prod_{k=0}^{\frac{r-3}{2}}(C_{2n-2k-2}, C_{2n-2k-1},\ldots, C_{2n-1}), & n \equiv 0 \pmod 2, r\equiv1 \pmod 2, r>1.
\end{cases}
\end{align*}

For $k_{n} \neq 0$ is even and $n$ is odd (resp. even), let $\seq$ be the mutation sequence: first we mutate $N_{n}^{(1)}$ starting from the initial quiver $Q'$, then we mutate $(C_{n+1},C_{n+2},\ldots,C_{2n-1})$ $\frac{k_{n}}{2}$ times,  and then we mutation $(C_{n+2}, C_{n+3}, \ldots, C_{2r-1})$ $k_{n-1}$ times; continue this procedure, we mutate $(C_{2n-t+1},C_{2n-t+2},\ldots,C_{2n-1})$ $k_{t}$ times, $t=n-2, n-3, \ldots,i+1$.

For $k_n \neq 0$ is odd, let $\seq$ be the mutation sequence: first we mutate $N_{n}^{(2)}$ starting from the initial quiver $Q'$, then we mutate $(C_{n-1},C_{n-2},\ldots,C_{1})$ $\frac{k_{n}+1}{2}$ times, and then we mutate $(C_{n-2},C_{n-3},\ldots,C_{1})$ $k_{n-1}$ times; continue this procedure, we mutate $(C_{t-1},C_{t-2},\ldots,C_{1})$ $k_{t}$ times, $t=n-2, n-3, \ldots,i+1$.

For $k_n=0$, let $k_r$ be the first non-zero integer from right in $k_1, k_2, \ldots, k_n$ and let $\seq$ be the mutation sequence: first we mutate $N_{n,r}^{(3)}$ starting from the initial quiver $Q'$, then we mutate $(C_{2n-r+1},C_{2n-r+2},\ldots,C_{2n-1})$ $k_{r}$ times, and then we mutation $(C_{2n-r+2}, C_{2n-r+3}, \ldots, C_{2n-1})$ $k_{r-1}$ times; continue this procedure, we mutate $(C_{2n-r-t+1},C_{2n-r-t+2},\ldots,C_{2n-1})$ $k_{t}$ times, $t=r-2, r-3, \ldots,i+1$.

After we mutate $Q'$ following the mutation sequence $\seq$, we obtain one of the minimal affinizations (\ref{M_2 in type B_n 1}), (\ref{M_2 in type B_n 2}), (\ref{M_2 in type B_n 3}) (the minimal affinization we obtain depends on $k_1, k_2, \ldots, k_n$) at the $k_i$-th vertex of $i$-th column.

\subsection{Proof of Theorem \ref{minimal affinizations correspond to cluster variables}} \label{proof of Theorem minimal affinizations correspond to cluster variables}

It suffices to prove that the equations in the M-system of type $A_n$ (resp. $B_{n}$) correspond to the mutations defined in Section \ref{mutation sequence in type A_n} (resp. Section \ref{mutation sequence in type B_n}).

Let $\mathfrak{M}$ be the set of minimal affinizations of type $A_n$ (resp. $B_n$). Let
\begin{align*}
\mathfrak{I}=\{ m^{(s)}_{k_1, k_2, \ldots,k_{n}}: s\in \mathbb{Z}, k_{1}, k_{2}, \ldots, k_{n} \in \mathbb{Z}_{\geq 0} \}.
\end{align*}

We define a map
\begin{equation}\label{S-s}
\begin{split}
\psi: \mathfrak{M}& \longrightarrow \mathfrak{I} \\
\mathcal{M}^{(s)}_{k_1, k_2, \ldots,k_{n}} & \mapsto m^{(s)}_{k_1, k_2, \ldots,k_{n}}.
\end{split}
\end{equation}
We apply the map $\psi$ defined by (\ref{S-s}) to the equations $[\mathcal{M}_1][\mathcal{M}_2]=[\mathcal{M}_3][\mathcal{M}_4]+[\mathcal{M}_5][\mathcal{M}_6]$ in the M-system for type $A_n$ (resp. $B_n$). Then we obtain a new system of equations:
\begin{align}
m_1m_2=m_3m_4+m_5m_6, \label{mutation equations 1}
\end{align}
where $m_i= \psi(\mathcal{M}_i)$, $1 \leq i \leq 6$.
For each equation in (\ref{mutation equations 1}), we define $m'_1=m_2$. Then we obtain a set of equations:
\begin{equation}\label{mutated equations}
\begin{split}
m'_1=\frac{m_3 m_4 + m_5 m_6}{m_1}.
\end{split}
\end{equation}
The above set of equations for type $A_n$ (resp. $B_n$) is the set of the mutation equations corresponding to the mutations in $\seq$ defined in Sections \ref{mutation sequence in type A_n} (resp. \ref{mutation sequence in type B_n}).

\section{The dual M-systems of types $A_n$ and $B_{n}$} \label{dual M system section}
In this section, we study the dual M-systems of types $A_n$ and $B_{n}$.

\subsection{The dual M-systems of types $A_n$, $B_{n}$}

\begin{lemma}[{Lemma 4.10, \cite{Her07}}] \label{map iota}
In the case of type $A_{n}$, let $\iota: \mathbb{Z}\mathcal{P}\rightarrow \mathbb{Z}\mathcal{P}$ be a homomorphism of rings such that $Y_{i, aq^{s}}\mapsto Y_{n-i+1, aq^{n-s+1}}^{-1}$ for all $i \in I$, $a\in \mathbb{C}^{\times}, s\in \mathbb{Z}$. Then
\begin{align*}
\chi_{q}(\widetilde{\mathcal M}_{k_{1},\ldots,k_{n}}^{(s)})=\iota(\chi_{q}(\mathcal{M}_{k_{1},\ldots,k_{n}}^{(s)})).
\end{align*}
In the case of type $B_{n}$, let $\iota: \mathbb{Z}\mathcal{P}\rightarrow \mathbb{Z}\mathcal{P}$ be a homomorphism of rings such that $Y_{i, aq^{s}} \mapsto Y_{i, aq^{4n-s-2}}^{-1}$ for all $i \in I$, $a\in \mathbb{C}^{\times}, s\in \mathbb{Z}$. Then
\begin{align*}
\chi_{q}(\widetilde{\mathcal M}_{k_{1},\ldots,k_{n}}^{(s)})=\iota(\chi_{q}(\mathcal{M}_{k_{1},\ldots,k_{n}}^{(s)})).
\end{align*}
\end{lemma}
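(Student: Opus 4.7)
The plan is to exploit that both $\mathcal{M}^{(s)}_{k_{1},\ldots,k_{n}}$ and $\widetilde{\mathcal{M}}^{(s)}_{k_{1},\ldots,k_{n}}$ are special and anti-special by Theorem \ref{dual modules are special and antispecial}, so that their $q$-characters are determined (via the Frenkel--Mukhin algorithm) by the unique dominant monomial, or dually by the unique anti-dominant monomial. Since $\iota$ inverts every $Y$-variable, it sends dominant monomials to anti-dominant ones and interchanges $\mathcal{Q}^{+}$ with $\mathcal{Q}^{-}$. My goal is to show that $\iota$ intertwines the Frenkel--Mukhin procedure for $\chi_q(\mathcal{M}^{(s)}_{k_{1},\ldots,k_{n}})$ starting from $M^{(s)}_{k_{1},\ldots,k_{n}}$ with the one for $\chi_q(\widetilde{\mathcal{M}}^{(s)}_{k_{1},\ldots,k_{n}})$ starting from $\widetilde{M}^{(s)}_{k_{1},\ldots,k_{n}}$.

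The crux is a pair of explicit identities. A direct computation from the definition of $A_{i,a}$, using the Cartan entries in each type, yields
\[
\iota(A_{i,\,aq^{s}}) = A_{n-i+1,\,aq^{n-s+1}}^{-1} \ (\text{type } A_n), \qquad \iota(A_{i,\,aq^{s}}) = A_{i,\,aq^{4n-s-2}}^{-1} \ (\text{type } B_n).
\]
In type $B_n$ this requires a short case split at nodes $n-1$ and $n$, accounting for $d_{n-1}=2$, $d_n=1$ and the asymmetric entries $C_{n-1,n}=-1$, $C_{n,n-1}=-2$. Substituting the explicit formula \eqref{minimal affinizations An I} (resp. \eqref{minimal affinizations Bn I}) into $\iota$ and collecting indices then shows that $\iota(M^{(s)}_{k_{1},\ldots,k_{n}})$ coincides with the unique anti-dominant monomial $\widetilde{m}_{-}$ of $\widetilde{\mathcal{M}}^{(s)}_{k_{1},\ldots,k_{n}}$, which one identifies from \eqref{minimal affinizations An II} (resp. \eqref{minimal affinizations Bn II}) by descending through $A^{-1}$-chains until only inverse $Y$-variables remain.

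With these in hand, apply $\iota$ to the inclusion $\mathscr{M}(\mathcal{M}^{(s)}_{k_{1},\ldots,k_{n}}) \subset M^{(s)}_{k_{1},\ldots,k_{n}}\mathcal{Q}^{-}$ to obtain $\iota(\chi_q(\mathcal{M}^{(s)}_{k_{1},\ldots,k_{n}})) \subset \widetilde{m}_{-}\mathcal{Q}^{+}$; the analogous identity $\iota(m_{-}) = \widetilde{M}^{(s)}_{k_{1},\ldots,k_{n}}$, combined with the anti-specialness of $\mathcal{M}^{(s)}_{k_{1},\ldots,k_{n}}$, shows that the unique dominant monomial of $\iota(\chi_q(\mathcal{M}^{(s)}_{k_{1},\ldots,k_{n}}))$ is $\widetilde{M}^{(s)}_{k_{1},\ldots,k_{n}}$ with multiplicity one. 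Moreover, the first identity above says that $\iota$ intertwines the restrictions $\beta_i$ to $U_{q_i}\widehat{\mathfrak{sl}}_2$-characters, up to the node permutation ($i \leftrightarrow n-i+1$ in type $A_n$, identity in type $B_n$) and a spectral shift; hence $\iota(\chi_q(\mathcal{M}^{(s)}_{k_{1},\ldots,k_{n}}))$ inherits $\mathfrak{sl}_2$-consistency at every node from $\chi_q(\mathcal{M}^{(s)}_{k_{1},\ldots,k_{n}})$. Specialness of $\widetilde{\mathcal{M}}^{(s)}_{k_{1},\ldots,k_{n}}$ guarantees that the Frenkel--Mukhin algorithm applied to $\widetilde{M}^{(s)}_{k_{1},\ldots,k_{n}}$ returns $\chi_q(\widetilde{\mathcal{M}}^{(s)}_{k_{1},\ldots,k_{n}})$, and $\iota(\chi_q(\mathcal{M}^{(s)}_{k_{1},\ldots,k_{n}}))$ satisfies the same defining property, so the two agree. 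The hardest part will be the type-$B_n$ bookkeeping: verifying the $A$-map identity at the boundary nodes and the $\mathfrak{sl}_2$-consistency check at the short node $n$, where the spectral shift $q^{4n-s-2}$ has to pair correctly with $q_n = q$ rather than $q^{2}$.
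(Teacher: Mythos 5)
The paper does not actually prove this statement: it is quoted verbatim as Lemma 4.10 of \cite{Her07}, so there is no in-paper argument to compare against. Judged on its own terms, your strategy is viable and the two computational claims at its heart are correct: one checks directly that $\iota(A_{i,aq^{s}})=A_{n-i+1,aq^{n-s+1}}^{-1}$ in type $A_n$ and $\iota(A_{i,aq^{s}})=A_{i,aq^{4n-s-2}}^{-1}$ in type $B_n$ (including the boundary nodes $n-1,n$ with $C_{n-1,n}=-1$, $C_{n,n-1}=-2$), and that $\iota$ carries the highest monomial $M^{(s)}_{k_1,\ldots,k_n}$ to the unique anti-dominant monomial of $\widetilde{\mathcal M}^{(s)}_{k_1,\ldots,k_n}$ and the anti-dominant monomial of $\mathcal M^{(s)}_{k_1,\ldots,k_n}$ to $\widetilde M^{(s)}_{k_1,\ldots,k_n}$.

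The one genuine gap is the closing sentence: ``$\iota(\chi_q(\mathcal M))$ satisfies the same defining property, so the two agree.'' The Frenkel--Mukhin output is characterized by \emph{minimality} among $\mathfrak{sl}_2$-consistent characters containing the given dominant monomial, and you have not shown $\iota(\chi_q(\mathcal M))$ is minimal --- only that it is consistent with a unique dominant monomial. The correct way to close is: the identity $\iota(A_{i,a})=A_{\nu(i),a'}^{-1}$ shows $\iota$ preserves the Frenkel--Mukhin characterization of $\operatorname{Im}\chi_q$ as $\bigcap_i\bigl(\mathbb{Z}[Y_{j,a}^{\pm1}]_{j\neq i}\otimes\mathbb{Z}[Y_{i,b}(1+A_{i,bq_i}^{-1})]_b\bigr)$, so $\iota(\chi_q(\mathcal M^{(s)}_{k_1,\ldots,k_n}))$ lies in $\operatorname{Im}\chi_q$ and hence is a $\mathbb{Z}$-linear combination of $q$-characters of simple modules; since by Theorem \ref{dual modules are special and antispecial} it has the single dominant monomial $\widetilde M^{(s)}_{k_1,\ldots,k_n}$ with coefficient one, and $\widetilde{\mathcal M}^{(s)}_{k_1,\ldots,k_n}$ is special, the difference $\iota(\chi_q(\mathcal M^{(s)}_{k_1,\ldots,k_n}))-\chi_q(\widetilde{\mathcal M}^{(s)}_{k_1,\ldots,k_n})$ is an element of $\operatorname{Im}\chi_q$ with no dominant monomial, hence zero. (A shorter route altogether: $\iota$ is, up to a spectral shift $\tau_b$, the substitution $Y_{i,a}\mapsto Y_{\nu(i),aq^{-rh^{\vee}}}^{-1}$ computing $\chi_q$ of the dual module, and the dual of $\mathcal M^{(s)}_{k_1,\ldots,k_n}$ is a module of the form $\widetilde{\mathcal M}^{(s')}_{k_1,\ldots,k_n}$; this is essentially how the statement arises in \cite{Her07}.)
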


\begin{theorem}\label{dual M-systems}
We have a system of equations
\begin{align}
[\widetilde{\mathcal{M}}_1]  [\widetilde{\mathcal{M}}_2] = [\widetilde{\mathcal{M}}_3] [\widetilde{\mathcal{M}}_4] + [\widetilde{\mathcal{M}}_5] [\widetilde{\mathcal{M}}_6]
\end{align}
in the Grothendieck $\mathcal{R}$ of the category $\mathcal{C}$, where $[\mathcal{M}_1]  [\mathcal{M}_2] = [\mathcal{M}_3] [\mathcal{M}_4] + [\mathcal{M}_5] [\mathcal{M}_6]$ are equations in the M-system of type $A_n$ (resp. $B_n$). Moreover, the modules in the
summands on the right hand side of each equation in the system are simple.
\end{theorem}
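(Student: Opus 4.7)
The plan is to transport the M-system identity of Theorem \ref{M-systems} through the ring involution $\iota$ of Lemma \ref{map iota} and then recognize each image as the $q$-character of a simple module. Since $\chi_q:\mathcal R\to\mathbb Z\mathcal P$ is an injective ring homomorphism and $\iota$ is a ring endomorphism of $\mathbb Z\mathcal P$, applying $\iota$ to the identity $\chi_q(\mathcal M_1)\chi_q(\mathcal M_2)=\chi_q(\mathcal M_3)\chi_q(\mathcal M_4)+\chi_q(\mathcal M_5)\chi_q(\mathcal M_6)$ yields
\[
\iota(\chi_q(\mathcal M_1))\iota(\chi_q(\mathcal M_2))=\iota(\chi_q(\mathcal M_3))\iota(\chi_q(\mathcal M_4))+\iota(\chi_q(\mathcal M_5))\iota(\chi_q(\mathcal M_6))
\]
in $\mathbb Z\mathcal P$. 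It then remains to identify each image $\iota(\chi_q(\mathcal M_i))$ with the $q$-character of the claimed simple module $\widetilde{\mathcal M}_i$, and to verify simplicity of the summands $\widetilde{\mathcal M}_3\otimes\widetilde{\mathcal M}_4$ and $\widetilde{\mathcal M}_5\otimes\widetilde{\mathcal M}_6$.

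For $i\in\{1,2,3,4\}$, each $\mathcal M_i=\mathcal M^{(s_i)}_{k_1^{(i)},\ldots,k_n^{(i)}}$ is a minimal affinization, so Lemma \ref{map iota} directly gives $\iota(\chi_q(\mathcal M_i))=\chi_q(\widetilde{\mathcal M}_i)$ for the dual minimal affinization $\widetilde{\mathcal M}_i:=\widetilde{\mathcal M}^{(s_i)}_{k_1^{(i)},\ldots,k_n^{(i)}}$. For $i\in\{5,6\}$, the simple module $\mathcal M_i=L(m_i)$ is defined in (\ref{M_5 M_6 in types A_n B_n}) by an explicit monomial $m_i$ built from a minimal-affinization highest monomial and a corner product $\prod Y_{i,k}$. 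I would define $\widetilde{\mathcal M}_i$ to be the simple module whose highest $l$-weight monomial is obtained from the unique anti-dominant monomial of $\chi_q(\mathcal M_i)$ (existence uses that $\mathcal M_i$ is anti-special in the relevant class) via the transformation underlying $\iota$: in type $A_n$ send $j_t\mapsto (n-j+1)_{n-t+1}$, in type $B_n$ send $j_t\mapsto j_{4n-t-2}$. The crucial verification is that $\iota(\chi_q(\mathcal M_i))=\chi_q(\widetilde{\mathcal M}_i)$ for $i=5,6$.

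To establish this, I would extend the proof of Lemma \ref{map iota} (Lemma 4.10 of \cite{Her07}) from minimal affinizations to the slightly larger class containing $\mathcal M_5$ and $\mathcal M_6$. The cleanest route is to exhibit $\iota$ as being induced by a monoidal automorphism of the category of finite-dimensional $U_q\widehat{\mathfrak g}$-modules (a shifted Chevalley-type duality composed with a spectral reflection), under which every simple $L(m)$ in the relevant class is sent to a simple module. Alternatively, one can argue combinatorially using the path description of Theorem \ref{path description of q-characters}: in type $A_n$, $\iota$ corresponds to a reflection of paths of the form $(r,y)\mapsto(n+1-r,n+1-y+c)$ for an appropriate shift $c$; in type $B_n$, an analogous reflection in the time coordinate. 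Both symmetries carry non-overlapping tuples of paths to non-overlapping tuples, so the formula of Theorem \ref{path description of q-characters} for $\mathcal M_i$ is transported to the same formula for $\widetilde{\mathcal M}_i$. Once $\iota(\chi_q(\mathcal M_i))=\chi_q(\widetilde{\mathcal M}_i)$ is established for all $i$, the simplicity of $\widetilde{\mathcal M}_3\otimes\widetilde{\mathcal M}_4$ and $\widetilde{\mathcal M}_5\otimes\widetilde{\mathcal M}_6$ follows from Theorem \ref{irreducible} together with the fact that the categorical involution sends simple tensor products to simple tensor products.

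The main obstacle is precisely the identification $\iota(\chi_q(\mathcal M_i))=\chi_q(\widetilde{\mathcal M}_i)$ for $i=5,6$, since these modules are not minimal affinizations and Lemma \ref{map iota} does not apply verbatim. Once this step is handled (either through the categorical interpretation of $\iota$ or through the path-reflection argument), the remainder of the proof is a purely formal application of $\iota$ to the already-established M-system identity.
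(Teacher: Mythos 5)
Your proposal is the paper's proof: apply the ring homomorphism $\iota$ of Lemma \ref{map iota} to each equation of the M-system (using injectivity of $\chi_q$), identify the images with $q$-characters of the dual modules, and deduce simplicity of the summands from Theorem \ref{irreducible}. The one place you diverge is in declaring the identification $\iota(\chi_q(\mathcal{M}_i))=\chi_q(\widetilde{\mathcal{M}}_i)$ for $i=5,6$ to be the "main obstacle'' on the grounds that $\mathcal{M}_5,\mathcal{M}_6$ are not minimal affinizations. That premise is false: a direct computation of the monomial $(\prod_{(i',k')\in\mathbb{X}_i}Y_{i',k'})\,M_1\,(M^{(s)}_{0,\ldots,0,k_i,0,\ldots,0})^{-1}$ shows that the node-$i$ block of $M_1$ is simply replaced by a block of the same length at node $i-1$ (e.g.\ in type $A_n$ with $i>1$ one gets $\prod_{l=0}^{k_i-1}(i-1)_{s+i-2+2l}$ times the part of $M_1$ supported on nodes $j,\ldots,n$), so $M_5$ and likewise $M_6$ are again of the form $M^{(s')}_{l_1,\ldots,l_n}$ — indeed in the paper's examples they are visibly Kirillov--Reshetikhin modules or minimal affinizations. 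Hence Lemma \ref{map iota} applies verbatim to all six factors and none of the extra machinery (a categorical realization of $\iota$, or a path-reflection argument) is needed for that step. Where some such input genuinely is needed — and where both the paper's one-line proof and your proposal are terse — is the final simplicity claim: passing from the simplicity of $\mathcal{M}_3\otimes\mathcal{M}_4$ and $\mathcal{M}_5\otimes\mathcal{M}_6$ (Theorem \ref{irreducible}) to that of $\widetilde{\mathcal{M}}_3\otimes\widetilde{\mathcal{M}}_4$ and $\widetilde{\mathcal{M}}_5\otimes\widetilde{\mathcal{M}}_6$ requires knowing that $\iota$ carries the $q$-character of a simple module in this family to the $q$-character of a simple module (equivalently, that the dominant monomials of $\chi_q(\widetilde{\mathcal{M}}_3)\chi_q(\widetilde{\mathcal{M}}_4)$ correspond under $\iota$ to the anti-dominant monomials of $\chi_q(\mathcal{M}_3)\chi_q(\mathcal{M}_4)$, which can be controlled by anti-speciality as in Theorem \ref{dual modules are special and antispecial}); your "categorical involution'' remark is exactly the right ingredient here, and it deserves to be made explicit rather than left as an aside.
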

We call the systems in Theorem \ref{dual M-systems} dual M-systems of types $A_n$, $B_n$.

\begin{proof}
The dual M-system of type $A_n$ (resp. $B_n$) is obtained by applying $\iota$ defined in Lemma \ref{map iota} to both sides of every equation of the M-system of type $A_n$ (resp. $B_n$). The simplicity of the modules in the summands on the right hand side of each equation in the system follows from Theorem \ref{irreducible}.
\end{proof}

\begin{example}\label{eample1}
The following are some equations in the dual M-system of type $A_3$.
\begin{align*}
&[1_{1}][2_{0}1_{3}] = [1_{1}1_{3}][2_{0}] + [2_{0}2_{2}],\\
&[1_{1}1_{3}][2_{0}1_{3}1_{5}] = [2_{0}1_{3}][1_{1}1_{3}1_{5}] + [2_{0}2_{2}2_{4}],\\
&[2_{2}][3_{1}2_{4}] = [2_{2}2_{4}][3_{1}] + [1_{3}][3_{1}3_{3}],\\
&[2_{2}2_{4}][3_{1}2_{4}2_{6}] = [3_{1}2_{4}][ 2_{2}2_{4}2_{6}] + [1_{3}1_{5}][3_{1}3_{3}3_{5}],\\
&[1_{3}][3_{1}1_{5}] = [1_{3}1_{5}][3_{1}] + [3_{1}2_{4}],\\
&[1_{3}1_{5}][3_{1}1_{5}1_{7}] = [3_{1}1_{5}][ 1_{3}1_{5}1_{7}] + [3_{1}2_{4}2_{6}],\\
&[3_{1}1_{5}][3_{1}2_{4}1_{7}] = [3_{1}1_{5}1_{7}][ 3_{1}2_{4}] + [3_{1}2_{4}2_{6}][3_{1}],\\
&[3_{1}1_{5}1_{7}][3_{1}2_{4}1_{7}1_{9}] = [3_{1}2_{4}1_{7}][3_{1}1_{5}1_{7}1_{9}] +  [3_{1}2_{4}2_{6}2_{8}][ 3_{1}].\\
\end{align*}

The following are some equations in the M-system of type $B_2$.
\begin{align*}
&[1_{3}][2_{0}2_{2}1_{7}] = [1_{3}1_{7}][2_{0}2_{2}] + [2_{0}2_{2}2_{4}2_{6}],\\
&[1_{3}1_{7}][2_{0}2_{2}1_{7}1_{11}] = [2_{0}2_{2}1_{7}][1_{3}1_{7}1_{11}] +  [2_{0}2_{2}2_{4}2_{6}2_{8}2_{10}],\\
&[1_{1}][2_{0}1_{5}]=[2_{0}][1_{1}1_{5}] + [2_{0}2_{2}2_{4}],\\
&[1_{1}1_{5}][2_{0}1_{5}1_{9}] = [2_{0}1_{5}][1_{1}1_{5}1_{9}] + [2_{0}2_{2}2_{4}2_{6}2_{8}].\\
\end{align*}
\end{example}

\subsection{The dual m-systems of types $A_n$, $B_{n}$} \label{dual m-systems Uqg}
Let
\[
\widetilde{\mathfrak{m}}_i = \res(\widetilde{\mathcal{M}}_{i})
\]
be the restriction of $\widetilde{\mathcal{M}}_{i}$ to $U_q \mathfrak{g}$, where $1\leq i \leq 6$. We obtain a system of equations which we called the dual m-system of type $A_n$ (resp. $B_n$) from Theorem \ref{dual M-systems}:
\begin{gather}
\begin{split}
\chi( \widetilde{\mathfrak{m}}_{1} ) \chi( \widetilde{\mathfrak{m}}_{2} )=\chi( \widetilde{\mathfrak{m}}_{3} )\chi( \widetilde{\mathfrak{m}}_{4} )+\chi( \widetilde{\mathfrak{m}}_{5} ) \chi( \widetilde{\mathfrak{m}}_{6}).
\end{split}
\end{gather}

\subsection{Relation between dual M-systems and cluster algebras}
The following cluster algebra $\widetilde{\mathscr{A}}$ (resp. $\widetilde{\mathscr{A}}'$) is dual to the cluster algebra $\mathscr{A}$ (resp. $\mathscr{A}'$) defined in \cite{HL13}, see Section \ref{definition of cluster algebra A}.

Let $W^+=W \cap (I \times \mathbb{Z}_{\geq 0})$ and let $\widetilde{Q}$ be the full subquiver of $G$ with vertex set $W^+$, see Section \ref{definition of cluster algebra A}. Let $\mathbf{z}^+=\{z_{i,r}: (i,r)\in W^{+}\}$ and let $\widetilde{\mathscr{A}}$ be the cluster algebra defined by the initial seed $(\mathbf{z}^+, \widetilde{Q})$. For convenience, we denote this cluster algebra in the case of type $B_n$ by $\widetilde{\mathscr{A}}'$.

By similar arguments in Section \ref{relation between M-systems and cluster algebras}, we have the following theorem.
\begin{theorem}\label{minimal affinizations correspond to cluster variablesII}
Every equation in the dual M-system of type $A_n$ (resp. $B_{n}$) corresponds to a mutation equation in the cluster algebra $\widetilde{\mathscr{A}}$ (resp. $\widetilde{\mathscr{A}}'$). Every minimal affinization in the dual M-system of type $A_n$ (resp. $B_{n}$) corresponds to a cluster variable of the cluster algebra $\widetilde{\mathscr{A}}$ (resp. $\widetilde{\mathscr{A}}'$).
\end{theorem}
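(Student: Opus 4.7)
The plan is to imitate the proof of Theorem \ref{minimal affinizations correspond to cluster variables} given in Section \ref{proof of Theorem minimal affinizations correspond to cluster variables}, using the fact that, by Lemma \ref{map iota} and Theorem \ref{dual M-systems}, the dual M-system is obtained from the M-system by applying the involution $\iota$. Parallel to this, the ``positive'' cluster algebra $\widetilde{\mathscr{A}}$ (resp. $\widetilde{\mathscr{A}}'$) defined on $W^+$ is obtained from $\mathscr{A}$ (resp. $\mathscr{A}'$) on $W^-$ by the reflection of indices which is also built into the definition of $G$. Hence the two sides should match in a way that is essentially formal.

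First I would set up the analogue of the map $\psi$ from (\ref{S-s}). Let $\widetilde{\mathfrak{M}}$ denote the set of dual minimal affinizations $\widetilde{\mathcal{M}}^{(s)}_{k_1,\ldots,k_n}$, and let $\widetilde{\mathfrak{I}}=\{\widetilde{m}^{(s)}_{k_1,\ldots,k_n}\}$ be a set of formal symbols. Choose an identification of the initial cluster $\mathbf{z}^+=\{z_{i,r}:(i,r)\in W^+\}$ with a subset $\widetilde{\mathbf{m}}\subset\widetilde{\mathfrak{I}}$ by, in type $A_n$, identifying $\widetilde{m}^{(2k_i+i-3)}_{0,\ldots,\underset{i}{k_i},\ldots,0}$ ($i$ even) and $\widetilde{m}^{(2k_i+i-2)}_{0,\ldots,\underset{i}{k_i},\ldots,0}$ ($i$ odd) with the corresponding $z_{i,r}$, and similarly in type $B_n$ by negating every superscript in (\ref{bt1})--(\ref{bt2}). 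Define
\begin{equation*}
\widetilde{\psi}:\widetilde{\mathfrak{M}}\longrightarrow\widetilde{\mathfrak{I}},\qquad
\widetilde{\mathcal{M}}^{(s)}_{k_1,\ldots,k_n}\longmapsto\widetilde{m}^{(s)}_{k_1,\ldots,k_n}.
\end{equation*}

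Next I would construct mutation sequences $\widetilde{\seq}$ on the initial seeds $(\widetilde{\mathbf{m}},\widetilde{Q})$ and $(\widetilde{\mathbf{m}}',\widetilde{Q}')$ which play the role of the sequences $\seq$ from Sections \ref{mutation sequence in type A_n} and \ref{mutation sequence in type B_n}. Because $\widetilde{Q}$ is, as an abstract quiver with column labelling, identical to $Q$ but obtained by reflecting the $\mathbb{Z}$-coordinate, one may take $\widetilde{\seq}$ to be precisely the sequences $\seq$ but performed on the columns of $\widetilde{Q}$. Running $\widetilde{\seq}$ will produce, at the $k_i$-th vertex of the $i$-th column, the formal symbol $\widetilde{\psi}(\widetilde{\mathcal{M}}_2)$ for the appropriate $\widetilde{\mathcal{M}}_2$ (the dualization of (\ref{M_2 in type A_n}) or of (\ref{M_2 in type B_n 1})--(\ref{M_2 in type B_n 3})). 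Applying $\widetilde{\psi}$ to each equation $[\widetilde{\mathcal{M}}_1][\widetilde{\mathcal{M}}_2]=[\widetilde{\mathcal{M}}_3][\widetilde{\mathcal{M}}_4]+[\widetilde{\mathcal{M}}_5][\widetilde{\mathcal{M}}_6]$ from Theorem \ref{dual M-systems} then yields an identity of the form
\begin{equation*}
\widetilde{m}_1\widetilde{m}_2 = \widetilde{m}_3\widetilde{m}_4+\widetilde{m}_5\widetilde{m}_6,\qquad \widetilde{m}_1'=\frac{\widetilde{m}_3\widetilde{m}_4+\widetilde{m}_5\widetilde{m}_6}{\widetilde{m}_1},
\end{equation*}
which I would check directly, vertex by vertex in the mutated quiver, to be precisely the exchange relation (\ref{exchange relation}) at the corresponding step of $\widetilde{\seq}$. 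This simultaneously gives the correspondence of equations and the fact that each $\widetilde{\psi}(\widetilde{\mathcal{M}})$ is a cluster variable, yielding both statements of the theorem.

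The main obstacle will be the bookkeeping in aligning the spectral shifts induced by $\iota$ with the grading on $W^+$: in type $A_n$ the formula $Y_{i,aq^s}\mapsto Y_{n-i+1,aq^{n-s+1}}^{-1}$ reverses both the column index and the sign of $s$, while in type $B_n$ only the sign of $s$ is reversed. One must verify that the chosen identification $\widetilde{\mathbf{m}}\leftrightarrow\mathbf{z}^+$ intertwines these two symmetries, and that the direction in which $\widetilde{\seq}$ traverses $W^+$ is consistent with the orientation of $\widetilde{Q}$. Once this dictionary is fixed, the verification of each individual exchange relation is identical to the corresponding verification in Section \ref{relation between M-systems and cluster algebras}, because $\iota$ preserves the multiplicative structure of highest-weight monomials and therefore commutes with the map $\widetilde{\psi}$.
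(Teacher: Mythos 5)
Your proposal is correct and follows exactly the route the paper intends: the paper's own justification is simply ``by similar arguments in Section \ref{relation between M-systems and cluster algebras},'' i.e.\ transport the map $\psi$, the identification of the initial cluster, and the mutation sequences $\seq$ from $(\mathbf{z}^-,Q)$ to $(\mathbf{z}^+,\widetilde{Q})$ via the involution $\iota$ of Lemma \ref{map iota} and Theorem \ref{dual M-systems}. Your write-up is in fact more explicit than the paper about the needed bookkeeping (the sign reversal of $s$ and, in type $A_n$, the reflection $i\mapsto n-i+1$), but it is the same argument.
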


\section{Connection with the Hernandez-Leclerc conjecture} \label{section Hernandez-Leclerc conjecture}
In this section, we show that our results imply that the Hernandez-Leclerc conjecture (Conjecture \ref{Hernandez-Leclerc conjecture 1}) is true for minimal affinizations of types $A_n$ and $B_n$.

Let us recall the definition of $\mathcal{C}_{\ell}$, see \cite{HL10}. Let $I$ be the set of vertices of the Dynkin diagram of $\mathfrak{g}$. The Drinfeld polynomials of a simple $U_q \widehat{\mathfrak{g}}$-module $S$ is an $I$-tuple of polynomials $\pi_S = (\pi_{i,S}(u); i\in I)$ in one indeterminate $u$ with coefficients in $\mathbb{C}$ and constant term $1$. Let $S$ be a simple $U_q \widehat{\mathfrak{g}}$-module with Drinfeld polynomials $\pi_{i,S}(u)=\prod_{k=1}^{n_i}(1-ua_{k}^{(i)})$, $i\in I$. Then the highest weight monomial of $S$ is $m_S=\prod_{i \in I} \prod_{k=1}^{n_i} Y_{i,a_k^{(i)}}$.

Let $I = I_0 \sqcup I_1$ be a partition of $I$ such that every edge connects a vertex of $I_0$ with a vertex of $I_1$. For $i \in I$, let $\xi_i = 0$ if $i \in I_0$ and $\xi_i=1$ if $i \in I_1$.

Let $\mathcal{C}_{\ell}$ ($\ell \in \mathbb{Z}_{\geq 0}$) be the full subcategory of $\mathcal{C}$ whose objects $V$ satisfy: for every composition factor $S$ of $V$ and every $i\in I$, the roots of the Drinfeld polynomial $\pi_{i,S}(u)$ belong to $\{q^{-2k-\xi_i}| 0 \leq k \leq \ell \}$.

\begin{remark}
The category $\mathcal{C}_{\ell}$ used in \cite{HL13} is slightly different. It can be defined as follows. Let $\mathcal{C}_{\ell}$ ($\ell \in \mathbb{Z}_{\leq 0}$) be the full subcategory of $\mathcal{C}$ whose objects $V$ satisfy: for every composition factor $S$ of $V$ and every $i\in I$, the roots of the Drinfeld polynomial $\pi_{i,S}(u)$ belong to $\{q^{2k+\xi_i}| \ell \leq k \leq 0 \}$. In this paper, we also use this definition of $\mathcal{C}_{\ell}$. The minimal affinizations corresponding to the cluster variables obtained from the mutations defined in Sections \ref{mutation sequence in type A_n} and \ref{mutation sequence in type B_n} are in $\mathcal{C}_{\ell}$.
\end{remark}

Conjecture \ref{Hernandez-Leclerc conjecture 1} has been proved in \cite{HL13} for Kirillov-Reshetikhin modules in all types. We have the following theorem.
\begin{theorem} \label{minimal affinizations are real prime simple and correspond to cluster variables}
Minimal affinizations of type $A_n$ (resp. $B_n$) are simple, real, prime and they correspond to cluster variables in $\mathscr{A}$, $\widetilde{\mathscr{A}}$ (resp. $\mathscr{A}'$, $\widetilde{\mathscr{A}'}$). Therefore Conjecture \ref{Hernandez-Leclerc conjecture 1} is true for minimal affinizations in types $A_n$, $B_n$.
\end{theorem}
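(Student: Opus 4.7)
The theorem makes four assertions about each minimal affinization $M$ in types $A_n$, $B_n$: (a) $M$ is simple, (b) $M$ is real, (c) $M$ is prime, and (d) $M$ is a cluster variable in $\mathscr{A}$, $\widetilde{\mathscr{A}}$ (type $A_n$) or $\mathscr{A}'$, $\widetilde{\mathscr{A}'}$ (type $B_n$). Of these, (a) is immediate from Theorem \ref{dual modules are special and antispecial}, since special modules are irreducible; (c) is the main theorem of \cite{CMY13}, establishing primeness of minimal affinizations in every Dynkin type; and (d) is exactly Theorem \ref{minimal affinizations correspond to cluster variables} together with its dual Theorem \ref{minimal affinizations correspond to cluster variablesII}. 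Once (b) is in hand, the simple, real, prime modules $M$ are cluster variables in the relevant cluster algebras, which is precisely the content of Conjecture \ref{Hernandez-Leclerc conjecture 1} restricted to this family.

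The substantive work is therefore (b): every minimal affinization $M = L(m_+)$ of type $A_n$ or $B_n$ is real, i.e., $M\otimes M$ is simple. My plan is to prove this by showing that $\chi_q(M\otimes M)=\chi_q(M)^2$ has a unique dominant monomial, namely $m_+^2$, which forces $M\otimes M\cong L(m_+^2)$. Using the path description in Theorem \ref{path description of q-characters}, every monomial of $\chi_q(M)$ is indexed by a non-overlapping tuple of paths in $\overline{\varphi}_{(i_t,k_t)_{1\le t\le L}}$; under the map (\ref{map sending paths to monomials}) upper corners produce positive factors $Y_{j,l}$ and lower corners produce $Y_{j,l}^{-1}$. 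Consequently, a product $m\cdot m'$ of two such monomials is dominant only when every lower corner appearing in one of the two tuples is cancelled by an upper corner of the other at the same position $(j,l)$. Combining the non-overlapping property with a careful analysis of the neighbouring-point sets $\mathbb{X}_{i,k}^{i',k'}$, $\mathbb{Y}_{i,k}^{i',k'}$ from Section \ref{Section neighbouring points}, I would show that these cancellations force both tuples to coincide with the highest paths $p^+_{i_t,k_t}$, so that $m_+^2$ is the only dominant monomial in $\chi_q(M)^2$.

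The hard part will be the type $B_n$ case, because the $\varepsilon$-perturbed paths through the short-root strand and the reorganisation of corners via the map $\tau$ make the direct cancellation bookkeeping substantially more delicate than in type $A_n$; in particular, the two sub-strands of a $B_n$-path interact in ways that have no $A_n$ analogue. A fallback plan that avoids the intricate case analysis is to induct on the total length $k_1+\cdots+k_n$ through the M-system identity $[\mathcal{M}_1][\mathcal{M}_2]=[\mathcal{M}_3][\mathcal{M}_4]+[\mathcal{M}_5][\mathcal{M}_6]$: the base case consists of Kirillov-Reshetikhin modules, whose realness follows from the monoidal categorification of the KR subcategory established in \cite{HL13} (their squares are cluster monomials and hence simple), while Theorem \ref{irreducible} forces the right-hand side of the M-system to decompose into precisely two simple factors, from which one can propagate realness along the sequences of Sections \ref{mutation sequence in type A_n} and \ref{mutation sequence in type B_n}. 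Once (b) is established, assembling it with (a), (c) and (d) yields Conjecture \ref{Hernandez-Leclerc conjecture 1} for the minimal affinizations of types $A_n$ and $B_n$.
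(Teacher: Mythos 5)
Your proposal follows essentially the same route as the paper: simplicity is by definition, primeness is quoted from \cite{CMY13}, the cluster-variable statement is Theorems \ref{minimal affinizations correspond to cluster variables} and \ref{minimal affinizations correspond to cluster variablesII}, and realness is established exactly as in the paper's Theorem \ref{minimal affinizations of types AB are real}, namely by using the path description and right-negativity to show that $M^{(s)2}_{k_1,\ldots,k_n}$ is the unique dominant monomial of $\chi_q(\mathcal{M}^{(s)}_{k_1,\ldots,k_n})^2$. The paper likewise writes out only the type $A_n$ case of this cancellation argument and declares the $B_n$ and dual cases similar, so your primary plan is a faithful match and the M-system induction fallback is not needed.
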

By Theorem \ref{minimal affinizations correspond to cluster variables} and Theorem \ref{minimal affinizations correspond to cluster variablesII}, every minimal affinization in the M-system of type $A_n$ (resp. $B_n$) corresponds to a cluster variable in $\mathscr{A}$ (resp. $\mathscr{A}'$), every minimal affinization in the dual M-system of type $A_n$ (resp. $B_n$) corresponds to a cluster variable in $\widetilde{\mathscr{A}}$ (resp. $\widetilde{\mathscr{A}}'$). By the results in \cite{CMY13}, minimal affinizations of all Dynkin types are prime. By definition, minimal affinizations are simple. Therefore to prove Theorem \ref{minimal affinizations are real prime simple and correspond to cluster variables}, we only need to show that minimal affinizations of types $A_n$ and $B_n$ are real. We have the following theorem.

\begin{theorem} \label{minimal affinizations of types AB are real}
Minimal affinizations of types $A_n$ and $B_n$ are real.
\end{theorem}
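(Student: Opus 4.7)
The plan is to prove realness of each minimal affinization $M = L(m)$ of type $A_n$ or $B_n$ by establishing the identity $\chi_q(L(m))^{2} = \chi_q(L(m^{2}))$ in $\mathbb{Z}\mathcal{P}$; equivalently, by showing that $M \otimes M$ is simple and hence isomorphic to $L(m^{2})$. Since $\chi_q$ is an injective ring homomorphism, the equality will give $[M]^{2} = [L(m^{2})]$ in $\mathcal{R}$, which is equivalent to $M \otimes M \cong L(m^{2})$ being simple.

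First I would verify that $L(m^{2})$ is special, i.e.\ that $m^{2}$ is the unique dominant monomial of $\chi_q(L(m^{2}))$. The idea is to adapt the method used for Theorem \ref{dual modules are special and antispecial} (cf.\ \cite{Her07}): any other candidate dominant monomial has the form $m^{2} N$ with $N \in \mathcal{Q}^{-}$, and by tracking right-negativity of products of the $A_{i,a}^{-1}$ and controlling when factors can cancel the pairs of $Y_{i,a}$ coming from the two copies of $m$, one rules out dominant monomials other than $m^{2}$ itself.

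Once $L(m^{2})$ is known to be special, the Frenkel--Mukhin algorithm produces the full $q$-character $\chi_q(L(m^{2}))$ starting from $m^{2}$. The next step is to match this output against $\chi_q(M)^{2}$. Here I would use the path description of Theorem \ref{path description of q-characters}: monomials of $\chi_q(M)^{2}$ are indexed by ordered pairs of non-overlapping $L$-tuples of paths (with $L$ the length of $M$), and one checks combinatorially that this set of pairs is in bijection with the set of $2L$-tuples of paths with top corners $m^{2}$ produced by Frenkel--Mukhin, with matching multiplicities. This yields $\chi_q(L(m^{2})) = \chi_q(M)^{2}$ and hence realness.

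The main obstacle is the first step, showing $L(m^{2})$ is special. The difficulty is that $m^{2}$ does not itself have the string form (\ref{minimal affinizations An I})--(\ref{minimal affinizations Bn II}) of a minimal-affinization monomial, so Theorem \ref{dual modules are special and antispecial} cannot be invoked directly; one must carry out a fresh case analysis that distinguishes types $A_n$ and $B_n$ and uses the ``neighbouring points'' bookkeeping of Section \ref{Section neighbouring points} to control cancellations between the two copies of $m$. The subsequent matching with $\chi_q(M)^{2}$ via paths should then reduce to largely combinatorial verifications.
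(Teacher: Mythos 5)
Your overall target --- showing $M\otimes M$ is simple by controlling dominant monomials --- is the right one, but the route you choose has a genuine gap at its central step. The decisive fact one needs is that the \emph{product} $\chi_q(M)\chi_q(M)$ contains a unique dominant monomial, namely $m^2$: this immediately makes $M\otimes M$ special, hence simple. Your step of proving $L(m^2)$ special is strictly weaker than this, since $\mathscr{M}(L(m^2))$ is only a subset of the set of monomials of $\chi_q(M)^2$; even with $L(m^2)$ special, the tensor product could a priori contain a further composition factor $L(m')$ for some dominant monomial $m'$ of $\chi_q(M)^2$ not occurring in $\chi_q(L(m^2))$. You recognize this and add the identity $\chi_q(L(m^2))=\chi_q(M)^2$ as a second step, but that is exactly where the argument is not supportable as stated: the Frenkel--Mukhin algorithm does not output ``tuples of paths,'' and the path description of Theorem \ref{path description of q-characters} is a theorem about minimal affinizations (more generally, snake modules) of types $A_n$, $B_n$. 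The module $L(m^2)$ is not a minimal affinization --- its highest weight monomial has squared variables --- so no path description of $\chi_q(L(m^2))$ is available, and the asserted bijection between pairs of non-overlapping $L$-tuples and the Frenkel--Mukhin output for $m^2$ has no starting point; it would itself require a new combinatorial theorem.

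The paper avoids all of this by working directly in the product: every monomial of $\chi_q(M)^2$ is a product $\mathbf{m}\mathbf{m}'$ of two monomials, each of which is given by Theorem \ref{path description of q-characters} as a non-overlapping tuple of paths, and one shows using right-negativity together with the non-overlapping property (working from the last path backwards: any lower corner $h_b^{-1}$ of a non-highest path would have to be cancelled by the other factor, which forces that factor off its highest paths and produces a negative letter that cannot be cancelled) that dominance forces $\mathbf{m}=\mathbf{m}'=M^{(s)}_{k_{1},k_{2},\ldots,k_{n}}$. If you redirect your right-negativity and cancellation analysis from $\chi_q(L(m^2))$ to the product $\chi_q(M)\chi_q(M)$, both of your intermediate steps become unnecessary and the proof closes.
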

\begin{proof}
The theorem follows from the following facts:
\begin{enumerate}[(1)]
\item $\chi_{q}(\mathcal{M}^{(s)}_{k_{1},k_{2},\ldots,k_{n}})\chi_{q}(\mathcal{M}^{(s)}_{k_{1},k_{2},\ldots,k_{n}})$, $s \in \mathbb{Z}, \ k_1, \ldots, k_n \in \mathbb{Z}_{\geq 0}$, has only one dominant monomial $M^{(s)}_{k_{1},k_{2},\ldots,k_{n}}M^{(s)}_{k_{1},k_{2},\ldots,k_{n}}$;

\item $\chi_{q}(\widetilde{\mathcal{M}}^{(s)}_{k_{1},k_{2},\ldots,k_{n}})\chi_{q}(\widetilde{\mathcal{M}}^{(s)}_{k_{1},k_{2},\ldots,k_{n}})$, $s \in \mathbb{Z}, \ k_1, \ldots, k_n \in \mathbb{Z}_{\geq 0}$, has only one dominant monomial $\widetilde{M}^{(s)}_{k_{1},k_{2},\ldots,k_{n}}\widetilde{M}^{(s)}_{k_{1},k_{2},\ldots,k_{n}}$.
\end{enumerate}
We will prove (1) in the case of type $A_{n}$. The other cases are similar.

Let $k_{i_1}, \ldots, k_{i_r}$ be the non-zero integers in $k_1, \ldots, k_n$. Then the length of $\mathcal{M}^{(s)}_{k_{1},k_{2},\ldots,k_{n}}$ is $L=k_{i_{1}}+k_{i_{2}}+\cdots+k_{i_{r}}$. Let $\mathbf{m}=\prod^{L}_{t=1}m(p_{t})$ (resp. $\mathbf{m}'=\prod^{L}_{t=1}m(p'_{t})$) be a monomial in the first (resp. the second) $\chi_{q}(\mathcal{M}^{(s)}_{k_{1},k_{2},\ldots,k_{n}})$ in $\chi_{q}(\mathcal{M}^{(s)}_{k_{1},k_{2},\ldots,k_{n}})\chi_{q}(\mathcal{M}^{(s)}_{k_{1},k_{2},\ldots,k_{n}})$, where $m$ is the map defined by (\ref{map sending paths to monomials}), $(p_{1},\ldots,p_{L})\in \overline{\varphi}_{(c_{t},d_{t})_{1\leq t\leq L}}$ (resp. $(p'_{1},\ldots,p'_{L})\in \overline{\varphi}_{(c_{t},d_{t})_{1\leq t\leq L}}$) is a tuple of non-overlapping paths, $d_1, \ldots, d_L$ are some integers determined by $s, k_1, k_2, \ldots, k_n$, and
\begin{align*}
&c_{1}=c_{2}=\cdots=c_{k_{i_{1}}}=i_{1}, \ c_{k_{i_{1}}+1}=c_{k_{i_{1}}+2}=\cdots=c_{k_{i_{1}}+k_{i_{2}}}=i_{2}, \ \ldots, \\
&\ldots, \ c_{k_{i_{1}}+k_{i_{2}}+\cdots+k_{i_{r-1}}+1}=c_{k_{i_{1}}+k_{i_{2}}+\cdots+k_{i_{r-1}}+2}=\cdots=c_{k_{i_{1}}+k_{i_{2}}+\cdots+k_{i_{r-1}}+k_{i_{r}}}=i_{r}.
\end{align*}

Suppose that $\mathbf{m} \mathbf{m}'$ is dominant. If $p_{L}\neq p^{+}_{c_{L},d_{L}}$ in $\mathbf{m}$, then $\mathbf{m} \mathbf{m}'$ is right-negative and hence not dominant. Therefore $p_{L}= p^{+}_{c_{L},d_{L}}$. By Theorem \ref{path description of q-characters}, we have $p_{u}=p^{+}_{c_{u},d_{u}}$, $\sum_{\ell=1}^{r-1}k_{i_{\ell}}< u<L$. Similarly, for $\sum_{\ell=1}^{r-1}k_{i_{\ell}}< u\leq L$, we have $p'_{u}=p'^{+}_{c_{u},d_{u}}$.

Suppose that $p_{\sum_{\ell=1}^{r-1}k_{i_{\ell}}}\neq p^{+}_{c_{\sum_{\ell=1}^{r-1}k_{i_{\ell}}},d_{\sum_{\ell=1}^{r-1}k_{i_{\ell}}}}$. Then $m(p_{\sum_{\ell=1}^{r-1}k_{i_{\ell}}})$ has some negative factor $h_b^{-1}$, where $(h, b) \in C_{p_{\sum_{\ell=1}^{r-1}k_{i_{\ell}}}}^-$. By Theorem \ref{path description of q-characters}, $\mathbf{m}$ has the factor $h_b^{-1}$. Therefore $h_b^{-1}$ is cancelled by $\mathbf{m}'$. It follows that ${\bf m}' \neq M^{(s)}_{k_{1},k_{2},\ldots,k_{n}}$ since $h_b$ is not in $M^{(s)}_{k_{1},k_{2},\ldots,k_{n}}$. But then $\mathbf{m} \mathbf{m}'$ has one of the factors
\begin{align}
1_{h+b-1}^{-1}, \ 2_{h+b-2}^{-1}, \ \ldots, \ (h-2)_{b+2}^{-1}, \ (h-1)_{b+1}^{-1}, \ (h+1)_{b+1}^{-1}, \ (h+2)_{b+2}^{-1}, \ \ldots, \ n_{b+n-h}^{-1}.
\end{align}
This contradicts the assumption that $\mathbf{m} \mathbf{m}'$ is dominant. Therefore $p_{\sum_{\ell=1}^{r-1}k_{i_{\ell}}}= p^{+}_{c_{\sum_{\ell=1}^{r-1}k_{i_{\ell}}},d_{\sum_{\ell=1}^{r-1}k_{i_{\ell}}}}$. By Theorem \ref{path description of q-characters}, we have $p_{t}=p^{+}_{c_{t},d_{t}}$, $\sum_{\ell=1}^{r-2}k_{i_{\ell}}<t\leq \sum_{\ell=1}^{r-1}k_{i_{\ell}}$. By the same arguments, we have $p_{t}=p^{+}_{c_{t},d_{t}}$ for $1 \leq t\leq\sum_{\ell=1}^{r-2}k_{i_{\ell}}$. Therefore ${\bf m} = M^{(s)}_{k_{1},k_{2},\ldots,k_{n}}$.

By the same arguments, we have ${\bf m}' = M^{(s)}_{k_{1},k_{2},\ldots,k_{n}}$. Therefore the only dominant monomial in $\chi_{q}(\mathcal{M}^{(s)}_{k_{1},k_{2},\ldots,k_{n}})\chi_{q}(\mathcal{M}^{(s)}_{k_{1},k_{2},\ldots,k_{n}})$ is $M^{(s)}_{k_{1},k_{2},\ldots,k_{n}}M^{(s)}_{k_{1},k_{2},\ldots,k_{n}}$.
\end{proof}

\section{Proof of Theorem \ref{M-systems}} \label{proof M-systems}
In this section, we will prove Theorem \ref{M-systems}.

\subsection{Classification of dominant monomials}
First we classify all dominant monomials in each summand on the left and right hand sides of every equation in Theorem \ref{M-systems}. We have the following lemma.
\begin{lemma} \label{set of dominant monomials in terms monomials}
Let $[\mathcal{M}_1]  [\mathcal{M}_2] = [\mathcal{M}_3] [\mathcal{M}_4] + [\mathcal{M}_5] [\mathcal{M}_6]$ be any equation in the M-system of type $A_n$ (resp. $B_n$) in Theorem \ref{M-systems}. Let $M_i$ be the highest $l$-weight monomial of $\mathcal{M}_i$, $i \in \{1, 2, \ldots, 6\}$. The dominant monomials in each summand on the left and right hand sides of $[\mathcal{M}_1]  [\mathcal{M}_2] = [\mathcal{M}_3] [\mathcal{M}_4] + [\mathcal{M}_5] [\mathcal{M}_6]$ are given in Table \ref{dominant monomials in the M-system of type A} (resp. Table \ref{dominant monomials in the M-system of type B}).
\end{lemma}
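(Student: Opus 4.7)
The plan is to classify dominant monomials in each product $\chi_q(\mathcal{M}_i)\chi_q(\mathcal{M}_j)$ appearing in the equation by parameterizing monomials with the path description of Theorem \ref{path description of q-characters}. Concretely, any monomial in $\chi_q(\mathcal{M}_i)\chi_q(\mathcal{M}_j)$ has the form $\mathbf{m}\mathbf{m}'$, where $\mathbf{m}=\prod_{t=1}^{L}m(p_t)$ comes from a non-overlapping tuple $(p_1,\ldots,p_L)\in \overline{\varphi}_{(c_t,d_t)_{1\le t\le L}}$ recording a monomial in $\chi_q(\mathcal{M}_i)$, and analogously for $\mathbf{m}'$. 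The goal is then to enumerate which pairs of tuples produce a product $\mathbf{m}\mathbf{m}'$ with no negative factor, and to match the resulting list with the entries of Tables \ref{dominant monomials in the M-system of type A} and \ref{dominant monomials in the M-system of type B}.

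First I would dispose of the one-sided case. By Theorem \ref{dual modules are special and antispecial}, each $\chi_q(\mathcal{M}_i)$ is special, so its unique dominant monomial is the highest $l$-weight $M_i$. Hence whenever one of $\mathbf{m},\mathbf{m}'$ happens to be dominant on its own, it is forced to equal the corresponding highest monomial, and the pair $(M_i,M_j)$ contributes the ``diagonal'' dominant monomial $M_iM_j$. The interesting case is when both $\mathbf{m}$ and $\mathbf{m}'$ individually carry negative factors that cancel against each other in the product.

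For this mixed case I would run the column-by-column argument in the proof of Theorem \ref{minimal affinizations of types AB are real}, from right to left. If the rightmost path $p_L$ in $\mathbf{m}$ is not the highest path $p^+_{c_L,d_L}$, then $m(p_L)$ contains a lower-corner factor $Y_{h,b}^{-1}$ which is right-negative; since every subsequent path in $\mathbf{m}$ lies strictly below $p_L$ and therefore contributes only factors $Y_{h',b'}$ with $b'>b$, this $Y_{h,b}^{-1}$ can only be cancelled by a matching upper-corner factor in some path of $\mathbf{m}'$. Using the explicit geometry of the sets $\varphi_{c,d}$ (and, in type $B_n$, the neighbouring-point assignments $(\mathbb{X},\mathbb{Y})$ from Section \ref{Section neighbouring points}), such a cancellation forces $p_L$ and the corresponding path in $\mathbf{m}'$ into a finite number of configurations, each producing one of the products $M_3M_4$ or $M_5M_6$ at the rightmost position. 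An induction leftward, in which each step uses right-negativity together with the non-overlapping property to rule out any deviation from $p^+$ that is not already paired up with a complementary deviation in the other tuple, then completes the enumeration and reproduces exactly the lists in the two tables.

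The main obstacle will be the bookkeeping in type $B_n$: the quiver $\varphi_{n,l}$ splits into two cases depending on whether $l\equiv 1$ or $3 \pmod 4$ (with the half-integer $\varepsilon$-shift near the last node), and the neighbouring-point sets $\mathbb{X}_{i,k}^{i',k'},\mathbb{Y}_{i,k}^{i',k'}$ have six subcases governed by whether $i=n$, $i'=n$, and the distance $k'-k$. One therefore needs to treat the three families (\ref{M_2 in type B_n 1})--(\ref{M_2 in type B_n 3}) and several parity subcases separately to verify that the list of dominant monomials closes under the cancellation mechanism and contains no extra terms. Crucially, this must be done without invoking Theorem \ref{irreducible} or Theorem \ref{M-systems} themselves, since those results will be deduced from this lemma by standard $q$-character and Frenkel-Mukhin arguments once the classification is in hand.
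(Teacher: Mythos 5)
Your overall strategy --- parameterize monomials of each factor by non-overlapping tuples of paths, then use right-negativity and the non-overlapping property to force almost all paths to be highest, exactly as in the proof of Theorem \ref{minimal affinizations of types AB are real} --- is the same as the paper's, and your warning about the type $B_n$ bookkeeping is apt. However, your case division has a genuine gap. You split into (i) one of $\mathbf{m},\mathbf{m}'$ dominant on its own, which you claim contributes only the diagonal monomial $M_iM_j$, and (ii) both $\mathbf{m}$ and $\mathbf{m}'$ individually carrying negative factors that cancel against each other. Neither case captures what actually produces the nontrivial entries of the tables: the paper first shows (block by block from the right, via right-negativity) that $\mathbf{m}=M_1$ exactly, and the non-diagonal dominant monomials then come from $\mathbf{m}'\neq M_2$, i.e.\ from a \emph{non-dominant} $\mathbf{m}'$ whose negative letters are cancelled by the positive letters of the dominant factor $M_1$. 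That configuration falls through your dichotomy, so followed literally your enumeration would return only $M_1M_2$ for the left-hand side.

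Relatedly, your claim that the cancellations force ``one of the products $M_3M_4$ or $M_5M_6$'' undercounts the answer. For $\chi_q(\mathcal{M}_1)\chi_q(\mathcal{M}_2)$ the dominant monomials form a whole chain $M_1M_2\prod_{0\leq j\leq r}A^{-1}_{i,s+i+2k_i-2j-4}$ with $-1\leq r\leq k_i-1$ (so $k_i+1$ of them, obtained by lowering the paths of $\mathbf{m}'$ in the $i$-th block consecutively from the top), while $\chi_q(\mathcal{M}_3)\chi_q(\mathcal{M}_4)$ carries the same chain only for $-1\leq r\leq k_i-2$ and $\chi_q(\mathcal{M}_5)\chi_q(\mathcal{M}_6)$ carries only the final term $M_5M_6=M_1M_2\prod_{0\leq j\leq k_i-1}A^{-1}_{i,s+i+2k_i-2j-4}$. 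The M-system identity is precisely this multiplicity count, so recovering only the two extreme monomials is not enough: you need the full chain, the fact that only consecutive-from-the-top lowerings stay dominant, and the resulting off-by-one between the ranges on the two sides.
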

We will prove Lemma \ref{set of dominant monomials in terms monomials} in Section \ref{proof classification of dominant monomials}.

In Table \ref{dominant monomials in the M-system of type A} and Table \ref{dominant monomials in the M-system of type B}, $M\prod_{0\leq j\leq r}A^{-1}_{i,s} = M$ for $r = -1$, $s \in \mathbb{Z}$.

\begin{table}[H] \resizebox{.6\width}{.6\height}{
\begin{tabular}{|c|c|c|c|}
\hline %
$\mathcal{M}_2$ & Dominant monomials of $\chi_q(\mathcal{M}_1)\chi_q(\mathcal{M}_2)$ & Dominant monomials of $\chi_q(\mathcal{M}_3)\chi_q(\mathcal{M}_4)$ & Dominant monomials of $\chi_q(\mathcal{M}_5)\chi_q(\mathcal{M}_6)$ \\
\hline %
$\mathcal{M}^{(s-2)}_{k_{1},k_{2}+1,k_{3},\ldots,k_{n}}$ & $\substack{M_1M_2\prod_{0\leq j\leq r}A^{-1}_{1,s+2k_{1}-2j-3},\\ -1\leq r\leq k_{1}-1}$ & $\substack{M_3M_4\prod_{0\leq j\leq r}A^{-1}_{1,s+2k_{1}-2j-3},\\ -1\leq r\leq k_{1}-2}$ & $\substack{M_5M_6}$ \\
\hline %
$\mathcal{M}^{(s-2)}_{0,\ldots, 0,\underset{i}{k_{i}},k_{i+1}+1,k_{i+2},\ldots,k_{n}}$ & $\substack{M_1M_2\prod_{0\leq j\leq r}A^{-1}_{i,s+i+2k_{i}-2j-4},\\ -1\leq r\leq k_{i}-1}$ & $\substack{M_3M_4\prod_{0\leq j\leq r}A^{-1}_{i,s+i+2k_{i}-2j-4}, \\-1\leq r\leq k_{i}-2}$ & $ \substack{M_5M_6}$ \\
\hline %
$\mathcal{M}^{(s-2)}_{k_{1},0,\ldots,0,k_{j}+1,k_{j+1},\ldots,k_{n}}$ & $\substack{M_1M_2\prod_{0\leq j\leq r}A^{-1}_{1,s+2k_{1}-2j-3}, \\-1\leq r\leq k_{1}-1 }$  & $\substack{M_3M_4\prod_{0\leq j\leq r}A^{-1}_{1,s+2k_{1}-2j-3},\\ -1\leq r\leq k_{1}-2}$ & $\substack{M_5M_6}$ \\
\hline %
$\mathcal{M}^{(s-2)}_{0,\ldots, 0,\underset{i}{k_{i}},0,\ldots,0,\underset{j}{k_{j}+1},k_{j+1},\ldots,k_{n}}$ & $\substack{M_1M_2\prod_{0\leq j\leq r}A^{-1}_{i,s+i+2k_{i}-2j-4},\\ -1\leq r\leq k_{i}-1}$  & $ \substack{M_3M_4\prod_{0\leq j\leq r}A^{-1}_{i,s+i+2k_{i}-2j-4}, \\-1\leq r\leq k_{i}-2} $ & $ \substack{M_5M_6}$ \\
\hline %
\end{tabular} }\vskip 0.05in
\caption{Classification of dominant monomials in the M-system of type $A_{n}$.}
\label{dominant monomials in the M-system of type A}
\end{table}

\begin{table}[H] \resizebox{.6\width}{.6\height}{
\begin{tabular}{|c|c|c|c|}
\hline %
$\mathcal{M}_2$ & Dominant monomials of $\chi_q(\mathcal{M}_1)\chi_q(\mathcal{M}_2)$ & Dominant monomials of $\chi_q(\mathcal{M}_3)\chi_q(\mathcal{M}_4)$ & Dominant monomials of $\chi_q(\mathcal{M}_5)\chi_q(\mathcal{M}_6)$ \\
\hline %
$\mathcal{M}_{k_{1},k_{2}+1,k_{3},\ldots,k_{n}}^{(s-4)}$ & $\substack{M_1M_2\prod_{0 \leq j \leq r}A^{-1}_{1,s+4k_{1}-4j-6}, \\-1 \leq r \leq k_{1}-1}$ & $\substack{M_3M_4\prod_{0 \leq j \leq r}A^{-1}_{1,s+4k_{1}-4j-6},\\ -1 \leq r \leq k_{1}-2}$ & $ \substack{M_5M_6}$ \\
\hline %
$\mathcal{M}_{0,\ldots,0,\underset{i}{k_{i}},k_{i+1}+1,k_{i+2},\ldots,k_{n}}^{(s-4)}$ & $\substack{M_1M_2\prod_{0 \leq j \leq r}A^{-1}_{i,s+2i+4k_{i}-4j-8}, \\-1 \leq r \leq k_{i}-1}$ & $\substack{M_3M_4\prod_{0 \leq j \leq r}A^{-1}_{i,s+2i+4k_{i}-4j-8}, \\-1 \leq r \leq k_{i}-2}$ & $\substack{M_5M_6}$ \\
\hline %
$\mathcal{M}_{0,\ldots,0,k_{n-1},k_{n}+2}^{(s-4)}$ & $\substack{M_1M_2\prod_{0 \leq j \leq r}A^{-1}_{n-1,s+2n+4k_{n-1}-4j-10}, \\-1 \leq r \leq k_{n-1}-1}$  & $\substack{M_3M_4\prod_{0 \leq j \leq r}A^{-1}_{n-1,s+2n+4k_{n-1}-4j-10},\\ -1 \leq r \leq k_{n-1}-2}$ & $\substack{M_5M_6} $ \\
\hline %
$\mathcal{M}_{0,\ldots,0,k_{n-1},1}^{(s-4)}$ & $\substack{M_1M_2\prod_{0 \leq j \leq r}A^{-1}_{n-1,s+2n+4k_{n-1}-4j-10}, \\-1 \leq r \leq k_{n-1}-1}$  & $\substack{M_3M_4\prod_{0 \leq j \leq r}A^{-1}_{n-1,s+2n+4k_{n-1}-4j-10},\\ -1 \leq r \leq k_{n-1}-2}$ & $\substack{M_5M_6} $ \\
\hline %
$\mathcal{M}_{k_{1},0,\ldots,0,k_{j}+1,k_{j+1},\ldots,k_{n}}^{(s-4)}$ & $\substack{M_1M_2\prod_{0 \leq j \leq r}A^{-1}_{1,s+4k_{1}-4j-6},\\ -1 \leq r \leq k_{1}-1}$  & $ \substack{M_3M_4\prod_{0 \leq j \leq r}A^{-1}_{1,s+4k_{1}-4j-6},\\ -1 \leq r \leq k_{1}-2} $ & $ \substack{M_5M_6}$\\
\hline %
$\mathcal{M}_{k_{1},0,\ldots,0,k_{n}+2}^{(s-4)}$ & $\substack{M_1M_2\prod_{0 \leq j \leq r}A^{-1}_{1,s+4k_{1}-4j-6}, \\-1 \leq r \leq k_{1}-1}$  & $ \substack{M_3M_4 \prod_{0 \leq j \leq r}A^{-1}_{1,s+4k_{1}-4j-6},\\ -1 \leq r \leq k_{1}-2} $ & $\substack{M_5M_6}$\\
\hline %
$\mathcal{M}_{k_{1},0,\ldots,0,1}^{(s-4)}$ & $\substack{M_1M_2\prod_{0 \leq j \leq r}A^{-1}_{1,s+4k_{1}-4j-6}, \\-1 \leq r \leq k_{1}-1}$  & $ \substack{M_3M_4 \prod_{0 \leq j \leq r}A^{-1}_{1,s+4k_{1}-4j-6},\\ -1 \leq r \leq k_{1}-2} $ & $\substack{M_5M_6}$\\
\hline %
$\mathcal{M}_{0,\ldots,0,\underset{i}{k_{i}},0,\ldots,0,k_{j}+1,k_{j+1},\ldots,k_{n}}^{(s-4)}$ & $\substack{M_1M_2\prod_{0 \leq j \leq r}A^{-1}_{i,s+2i+4k_{i}-4j-8},\\ -1 \leq r \leq k_{i}-1}$  & $ \substack{M_3M_4\prod_{0 \leq j \leq r}A^{-1}_{i,s+2i+4k_{i}-4j-8}, \\-1 \leq r \leq k_{i}-2} $ & $\substack{M_5M_6}$\\
\hline %
$\mathcal{M}_{0,\ldots,0,\underset{i}{k_{i}},0,\ldots,0,k_{n}+2}^{(s-4)}$ & $\substack{M_1M_2\prod_{0 \leq j \leq r}A^{-1}_{i,s+2i+4k_{i}-4j-8},\\ -1 \leq r \leq k_{i}-1}$  & $ \substack{M_3M_4\prod_{0 \leq j \leq r}A^{-1}_{i,s+2i+4k_{i}-4j-8},\\ -1 \leq r \leq k_{i}-2}$ & $\substack{M_5M_6}$\\
\hline %
$\mathcal{M}_{0,\ldots,0,\underset{i}{k_{i}},0,\ldots,0,1}^{(s-4)}$ & $\substack{M_1M_2\prod_{0 \leq j \leq r}A^{-1}_{i,s+2i+4k_{i}-4j-8},\\ -1 \leq r \leq k_{i}-1}$  & $ \substack{M_3M_4\prod_{0 \leq j \leq r}A^{-1}_{i,s+2i+4k_{i}-4j-8},\\ -1 \leq r \leq k_{i}-2}$ & $\substack{M_5M_6}$\\
\hline %
\end{tabular}}
\caption{Classification of dominant monomials in the M-system of type $B_{n}$.}
\label{dominant monomials in the M-system of type B}
\end{table}

\subsection{Proof of Theorem \ref{M-systems}}
By Table \ref{dominant monomials in the M-system of type A} and Table \ref{dominant monomials in the M-system of type B}, the dominant monomials in the $q$-characters of the left hand side and of the right hand side of every equation in Theorem \ref{M-systems} are the same. Therefore  Theorem \ref{M-systems} is true.

\subsection{Proof of Lemma \ref{set of dominant monomials in terms monomials}} \label{proof classification of dominant monomials}
We will prove the case of the $4$-th line of the second column of Table \ref{dominant monomials in the M-system of type A} and the case of $3$-rd line of the second column of Table \ref{dominant monomials in the M-system of type B}. The other cases are similar.

\begin{proof}[\bf Proof of the case of the $4$-th line of the second column of Table \ref{dominant monomials in the M-system of type A}.]
We have
\begin{align*}
M_1 & = M^{(s)}_{0,\ldots,0,\underset{i}{k_{i}},0,\ldots,0,\underset{j}{k_{j}},k_{j+1},\ldots,k_{n}}), \\
M_2 & = M^{(s-2)}_{0,\ldots,0,\underset{i}{k_{i}},0,\ldots,0,\underset{j}{k_{j}+1},k_{j+1},\ldots,k_{n}}.
\end{align*}

Let $L=k_{i}+k_{j}+\cdots+k_{n}$. Let ${\bf m}=\prod^{L}_{t=1}m(p_{t})$ be a monomial in $\chi_{q}(M_1)$, where $(p_{1},\ldots,p_{L})\in \overline{\varphi}_{(c_{t},d_{t})_{1\leq t\leq L}}$ is a tuple of non-overlapping paths, $d_1, \ldots, d_L$ are some integers determined by $s, k_1, k_2, \ldots, k_n$, and
\begin{align*}
&c_{1}=c_{2}=\cdots=c_{k_{i}}=i, \ c_{k_{i}+1}=c_{k_{i}+2}=\cdots=c_{k_{i}+k_{j}}=j, \ \ldots, \\
&\ldots, \ c_{k_{i}+k_{j}+\cdots+k_{n-1}+1}=c_{k_{i}+k_{j}+\cdots+k_{n-1}+2}=\cdots=c_{k_{i}+k_{j}+\cdots+k_{n}}=n.
\end{align*}
Let ${\bf m}'=\prod^{L+1}_{u=1}m(p'_{u})$ be a monomial in $\chi_{q}(M_2)$, where $(p'_{1},\ldots,p'_{L+1})\in \overline{\varphi}_{(c'_{u},d'_{u})_{1\leq u\leq L+1}}$ is a tuple of non-overlapping paths, $d'_{1}, \ldots, d_{L+1}'$ are some integers determined by $s, k_1, k_2, \ldots, k_n$, and
\begin{align*}
&c'_{1}=c'_{2}=\cdots=c'_{k_{i}}=i, \ c'_{k_{i}+1}=c'_{k_{i}+2}=\cdots=c'_{k_{i}+k_{j}}=c'_{k_{i}+k_{j}+1}=j, \ \ldots, \\
&\ldots, \ c'_{k_{i}+k_{j}+\cdots+k_{n-1}+2}=c'_{k_{i}+k_{j}+\cdots+k_{n-1}+3}=\cdots=c'_{k_{i}+k_{j}+\cdots+k_{n}+1}=n.
\end{align*}

Suppose that ${\bf m} {\bf m}'$ is dominant. The length of $M_1$ is $L$ and the length of $M_2$ is $L+1$. By the same arguments as the arguments in the proof of Theorem \ref{minimal affinizations of types AB are real}, we have $p'_{u}=p'^{+}_{c'_{u},d'_{u}}$, $k_{i}< u \leq L+1$ and $p_{t}=p^{+}_{c_{t},d_{t}}$, $1 \leq t\leq L$. Therefore ${\bf m} = M_1$.

If $p'_{k_i} = p'^+_{c_{k_{i}},d_{k_{i}}}$, then $p'_{k_i - \ell} = p'^+_{c_{k_i - \ell},d_{k_i - \ell}}$ $(1\leq \ell \leq k_i-1)$. Therefore ${\bf m} {\bf m}' = M_1 M_2$. If $p'_{k_i} = p'^+_{c_{k_{i}},d_{k_{i}}} A^{-1}_{i,s+i+2k_{i}-4}$, then $p'_{k_i - \ell} \in \{ p'^+_{c_{k_i - \ell},d_{k_i - \ell}}, p'^+_{c_{k_i - \ell},d_{k_i - \ell}} A^{-1}_{i, s+i+2k_{i}-2\ell-4} \}$ $(1\leq \ell \leq k_i-1)$. Therefore ${\bf m} {\bf m}'$ is one of the dominant monomials $M_1 M_2 \prod_{0\leq j\leq r}A^{-1}_{i, s+i+2k_{i}-2j-4}$, $0\leq r\leq k_{i}-1$. If $p'_{k_i} \not\in \{ p'^+_{c_{k_{i}},d_{k_{i}}}, p'^+_{c_{k_{i}},d_{k_{i}}} A^{-1}_{i, s+i+2k_{i}-4}\}$, then by the same arguments as the arguments in the proof of Theorem \ref{minimal affinizations of types AB are real}, ${\bf m} {\bf m}'$ is not dominant which contradicts our assumption.
\end{proof}

\begin{proof}[\bf Proof of the case of the $3$-rd line of the second column of Table \ref{dominant monomials in the M-system of type B}.]
We have
\begin{align*}
M_1 = M_{0,\ldots,0,k_{n-1},k_{n}}^{(s)}), \quad M_2 = M_{0,\ldots,0,k_{n-1},k_{n}+2}^{(s-4)}).
\end{align*}

Let $L=k_{n-1}+k_{n}$. Let ${\bf m}=\prod^{L}_{t=1}m(p_{t})$ be a monomial in $\chi_{q}(M_1)$, where $(p_{1},\ldots,p_{L})\in \overline{\varphi}_{(c_{t},d_{t})_{1\leq t\leq L}}$ is a tuple of non-overlapping paths, $d_1, \ldots, d_L$ are some integers determined by $s, k_{n-1}, k_{n}$, and
\begin{align*}
&c_{1}=c_{2}=\cdots=c_{k_{n-1}}=n-1, \ c_{k_{n-1}+1}=c_{k_{n-1}+2}=\cdots=c_{k_{n-1}+k_{n}}=n.
\end{align*}
Let ${\bf m}'=\prod^{L+2}_{u=1}m(p'_{u})$ be a monomial in $\chi_{q}(M_2)$, where
\begin{align*}
(p'_{1},\ldots,p'_{L+2})\in \overline{\varphi}_{(c'_{u},d'_{u})_{1\leq u\leq L+2}}
\end{align*}
is a tuple of non-overlapping paths, $d'_{1}, \ldots, d_{L+2}'$ are some integers determined by $s, k_{n-1}, k_{n}$, and
\begin{align*}
&c'_{1}=c'_{2}=\cdots=c'_{k_{n-1}}=n-1, \ c'_{k_{n-1}+1}=c'_{k_{n-1}+2}=\cdots=c'_{k_{n-1}+k_{n}+2}=n.
\end{align*}
Suppose that ${\bf m} {\bf m}'$ is dominant. The length of $M_1$ is $L$ and the length of $M_2$ is $L+2$. By the same arguments as the arguments in the proof of Theorem \ref{minimal affinizations of types AB are real}, we have $p'_{u}=p'^{+}_{c'_{u},d'_{u}}$, $k_{n-1} < u \leq L+2$ and $p_{t}=p^{+}_{c_{t},d_{t}}$, $1 \leq t\leq L$. Therefore ${\bf m} = M_1$.

If $p'_{k_n-1} = p'^+_{c_{k_n-1},d_{k_n-1}}$, then $p'_{k_{n-1} - \ell} = p'^+_{c_{k_{n-1} - \ell},d_{k_{n-1} - \ell}}$ $(1\leq \ell \leq k_{n-1}-1)$. Therefore ${\bf m} {\bf m}' = M_1 M_2$. If $p'_{k_{n-1}} = p'^+_{c_{k_{n-1}},d_{k_{n-1}}} A^{-1}_{n-1, s+2n+4k_{n-1}-10}$, then
\begin{gather}
\begin{align*}
p'_{k_{n-1} - \ell} \in \{ p'^+_{c_{k_{n-1} - \ell},d_{k_{n-1} - \ell}}, p'^+_{c_{k_{n-1} - \ell},d_{k_{n-1} - \ell}} A^{-1}_{n-1, s+2n+4k_{n-1}-4\ell-10} \},  \quad 1\leq \ell \leq k_{n-1}-1.
\end{align*}
\end{gather}
Therefore ${\bf m} {\bf m}'$ is one of the dominant monomials $M_1 M_2 \prod_{0 \leq j \leq r}A^{-1}_{n-1, s+2n+4k_{n-1}-4j-10}$, $0 \leq r \leq k_{n-1}-1$. If $p'_{k_{n-1}} \not\in \{ p'^+_{c_{k_{n-1}},d_{k_{n-1}}},  p'^+_{c_{k_{n-1}},d_{k_{n-1}}} A^{-1}_{n-1, s+2n+4k_{n-1}-10}\}$, then by the same arguments as the arguments in the proof of Theorem \ref{minimal affinizations of types AB are real}, ${\bf m} {\bf m}'$ is not dominant which contradicts our assumption.
\end{proof}

\section{Proof of Theorem \ref{irreducible}}  \label{proof irreducible}
In this section, we prove Theorem \ref{irreducible}.

By Lemma \ref{set of dominant monomials in terms monomials}, we have the following result.
\begin{corollary}
The modules in the second summand on the right hand side of every equation of the M-system are special. In particular, they are simple.
\end{corollary}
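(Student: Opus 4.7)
The plan is to extract the corollary as an immediate consequence of Lemma~\ref{set of dominant monomials in terms monomials}. A quick scan of the third column of Table~\ref{dominant monomials in the M-system of type A} and Table~\ref{dominant monomials in the M-system of type B} shows that, in every one of the cases (both in type $A_n$ and in type $B_n$), the listed dominant monomials of $\chi_q(\mathcal{M}_5)\chi_q(\mathcal{M}_6)$ reduce to the single entry $M_5 M_6$. So the tables already tell us that the full product has exactly one dominant monomial, namely the highest weight monomial $M_5 M_6$ itself.

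From there the argument is essentially formal. Since $\chi_q$ is a ring homomorphism on the Grothendieck ring $\mathcal{R}$ (Section 2.3), the $q$-character of the tensor product $\mathcal{M}_5\otimes\mathcal{M}_6$ is precisely $\chi_q(\mathcal{M}_5)\chi_q(\mathcal{M}_6)$. Hence $\mathscr{M}(\mathcal{M}_5\otimes\mathcal{M}_6)$ contains exactly one dominant monomial. By the definition of a special module recalled in Section 2.3, this means that $\mathcal{M}_5\otimes\mathcal{M}_6$ is special, which is the first assertion of the corollary.

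For the ``in particular, they are simple'' clause I would then cite the general fact, also recorded in Section 2.3, that every special $U_q\widehat{\mathfrak{g}}$-module is irreducible. Applied to $\mathcal{M}_5\otimes\mathcal{M}_6$, this yields $\mathcal{M}_5\otimes\mathcal{M}_6\cong L(M_5M_6)$, proving the second assertion.

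There is no real obstacle at this stage: once Lemma~\ref{set of dominant monomials in terms monomials} is in hand the corollary is a one-line consequence of the third column of the two tables. The substantive work is upstream, inside the proof of Lemma~\ref{set of dominant monomials in terms monomials}, where the non-overlapping path description of Theorem~\ref{path description of q-characters} and a right-negativity argument are used to rule out any dominant monomial other than $M_5 M_6$; that step, however, has already been carried out by the time the corollary is stated.
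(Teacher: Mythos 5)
Your proposal is correct and is exactly the paper's argument: the authors derive the corollary directly from Lemma \ref{set of dominant monomials in terms monomials}, whose tables show that $M_5M_6$ is the unique dominant monomial of $\chi_q(\mathcal{M}_5)\chi_q(\mathcal{M}_6)$, so that $\mathcal{M}_5\otimes\mathcal{M}_6$ is special and hence simple by the fact recalled in Section 2.3. Your write-up only makes explicit the routine steps (multiplicativity of $\chi_q$ and the special-implies-irreducible implication) that the paper leaves implicit.
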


Therefore in order to prove Theorem \ref{irreducible}, we only need to prove that the modules in the first summand on the right hand side of every equation of the M-system are simple. We will prove that in the case of type $A_n$,
\begin{align} \label{one case of type An proof irreducible}
 \mathcal{M}^{(s-2)}_{0,\ldots, 0,\underset{i}{k_{i}+1},0,\ldots,0,\underset{j}{k_{j}},k_{j+1},\ldots,k_{n}} \otimes \mathcal{M}^{(s)}_{0,\ldots, 0,\underset{i}{k_{i}-1},0,\ldots,0,\underset{j}{k_{j}+1},k_{j+1}\ldots,k_{n}} ,
\end{align}
where $2< i+1< j \leq n$, is simple (this is $\mathcal{M}_3 \otimes \mathcal{M}_4$ which corresponds to the $4$-th line of the third column of Table \ref{dominant monomials in the M-system of type A}), and in the case of type $B_n$,
\begin{align} \label{one case of type Bn proof irreducible}
\mathcal{M}_{0,\ldots,0,k_{n-1}+1,k_{n}}^{(s-4)} \otimes \mathcal{M}_{0,\ldots,0,k_{n-1}-1,k_{n}+2}^{(s)} 
\end{align}
is simple (this is $\mathcal{M}_3 \otimes \mathcal{M}_4$ which corresponds to the $3$-rd line of the third column of Table \ref{dominant monomials in the M-system of type B}). The other cases are similar.

The following is the proof of the fact that (\ref{one case of type An proof irreducible}) is simple. Let
\begin{align*}
M_3 = M^{(s-2)}_{0,\ldots, 0,\underset{i}{k_{i}+1},0,\ldots,0,\underset{j}{k_{j}},k_{j+1},\ldots,k_{n}}, \quad  M_4 = M^{(s)}_{0,\ldots, 0,\underset{i}{k_{i}-1},0,\ldots,0,\underset{j}{k_{j}+1},k_{j+1},\ldots,k_{n}}.
\end{align*}

By Lemma \ref{set of dominant monomials in terms monomials}, the dominant monomials in $\chi_q(M_3) \chi_q(M_4)$ are
\begin{align*}
M_r = M_3 M_4 \prod_{0\leq j\leq r}A^{-1}_{i,s+i+2k_{i}-2j-4}, \quad -1\leq r\leq k_{i}-2.
\end{align*}

We need to show that $\chi_{q}(M_{r})\nsubseteq \chi_q(M_3) \chi_q(M_4)$ for $0 \leq r \leq k_{i}-2$. We will prove the case of $r=0$. The other cases are similar. By Corollary \ref{Uqsl_2 arguments}, the monomial $n_{1} = M_3 M_4 A^{-2}_{i,s+i+2k_{i}-4}$ is in $\chi_{q}(M_{0})$.

Suppose that $n_{1}\in \chi_{q}(M_3)\chi_{q}(M_4)$. Then $n_1=m_{1}m_{2}$, where $m_1 \in \chi_{q}(M_3)$, $m_2 \in \chi_{q}(M_4)$. Since $n_1=M_3 M_4 A^{-2}_{i,s+i+2k_{i}-4}$, by the expressions $M_3$ and $M_4$ we must have
\begin{align*}
m_1 = M_3 A^{-1}_{i,s+i+2k_{i}-4}, \quad m_{2} = M_4 A^{-1}_{i,s+i+2k_{i}-4}.
\end{align*}
But by the Frenkel-Mukhin algorithm, $M_3 A^{-1}_{i,s+i+2k_{i}-4}$ is not in $\chi_{q}(M_3)$. This is a contradiction. Therefore $n_{1} \not\in \chi_{q}(M_3)\chi_{q}(M_4)$ and hence $\chi_{q}(M_{0})\nsubseteq \chi_q(M_3) \chi_q(M_4)$.

The following is the proof of the fact that (\ref{one case of type Bn proof irreducible}) is simple. Let
\begin{align*}
 M_3 = M_{0,\ldots,0,k_{n-1}+1,k_{n}}^{(s-4)}, \quad M_4 = M_{0,\ldots,0,k_{n-1}-1,k_{n}+2}^{(s)}.
\end{align*}

By Lemma \ref{set of dominant monomials in terms monomials}, the dominant monomials in $\chi_q(M_3) \chi_q(M_4)$ are
\begin{align*}
M_r =M_3 M_4 \prod_{0 \leq j \leq r}A^{-1}_{n-1,s+2n+4k_{n-1}-4j-10}, \quad -1\leq r \leq k_{n-1}-2.
\end{align*}

We need to show that $\chi_{q}(M_{r})\nsubseteq \chi_q(M_3) \chi_q(M_4)$ for $0 \leq r \leq k_{n-1}-2$. We will prove the case of $r=0$. The other cases are similar. By Corollary \ref{Uqsl_2 arguments}, the monomial $n_{1} = M_3 M_4 A^{-2}_{n-1,s+2n+4k_{n-1}-10}$ is in $\chi_{q}(M_{0})$.

Suppose that $n_{1}\in \chi_{q}(M_3)\chi_{q}(M_4)$. Then $n_1=m_{1}m_{2}$, where $m_1 \in \chi_{q}(M_3)$, $m_2 \in \chi_{q}(M_4)$. Since $n_1=M_3 M_4 A^{-2}_{n-1,s+2n+4k_{n-1}-10}$, by the expressions $M_3$ and $M_4$ we must have
\begin{align*}
m_1 =M_3 A^{-1}_{n-1,s+2n+4k_{n-1}-10}, \quad m_{2} = M_4 A^{-1}_{n-1,s+2n+4k_{n-1}-10}.
\end{align*}
But by the Frenkel-Mukhin algorithm, $M_3 A^{-1}_{n-1,s+2n+4k_{n-1}-10}$ is not in $\chi_{q}(M_3)$. This is a contradiction. Hence $\chi_{q}(M_{0})\nsubseteq \chi_q(M_3) \chi_q(M_4)$.

\section*{Acknowledgement}
The authors are very grateful to the anonymous referees for their comments and suggestions that have been helpful to improve the quality of this paper. J.-R. Li would like to express his gratitude to Professor Vyjayanthi Chari for telling him a reference about the fact that all minimal affinizations are prime. J.-R. Li is supported by ERC AdG Grant 247049, the PBC Fellowship Program of Israel for Outstanding Post-Doctoral Researchers from China and India. The authors are supported by the National Natural Science Foundation of China (no. 11371177, 11401275), and the Fundamental Research Funds for the Central Universities of China (no. lzujbky-2015-78).

\section*{Appendix}
In this section, we give some examples of mutation sequences. The initial quivers in this section are the initial quivers in \cite{HL13}. The mutation sequences in this section are similar to the mutation sequences given in \cite{HL13}. In \cite{HL13}, the mutation sequences produce  Kirillov-Reshetikhin modules. In the following, the mutation sequences produce minimal affinizations. A box at a vertex indicates that a mutation has been performed at the vertex. Figure \ref{mutation sequence 11} and Figure \ref{mutation sequence 21211} are examples of mutation sequences of type $A_3$. Figure \ref{mutation sequence B11} and Figure \ref{mutation sequence B33} are examples of mutation sequences of type $B_2$.

\begin{figure}[H]
\resizebox{.6\width}{.6\height}{
\begin{minipage}[t]{.10\linewidth}
\small\small\begin{xy}
(25,50)*+{t_{0,1,0}^{(-1)}}="a";%
(10,40)*+{t_{1,0,0}^{(-1)}}="b"; (40,40)*+{t_{0,0,1}^{(-3)}}="c";%
(25,30)*+{t_{0,2,0}^{(-3)}}="d";%
(10,20)*+{t_{2,0,0}^{(-3)}}="e"; (40,20)*+{t_{0,0,2}^{(-5)}}="f";%
(25,10)*+{t_{0,3,0}^{(-5)}}="g";%
(10,0)*+{t_{3,0,0}^{(-5)}}="h"; (40,0)*+{t_{0,0,3}^{(-7)}}="i";%
(10,-15)*+{\vdots}="j"; (25,-15)*+{\vdots}="k";(40,-15)*+{\vdots}="m";%
(25,-25)*+{(a)}="n";%
{\ar "a";"b"};{\ar "a";"c"};%
{\ar "b";"d"};{\ar "c";"d"};%
{\ar "d";"a"};{\ar "d";"e"};{\ar "d";"f"};%
{\ar "e";"b"};{\ar "e";"g"};{\ar "f";"c"};{\ar "f";"g"};%
{\ar "g";"d"};{\ar "g";"h"};{\ar "g";"i"};%
{\ar "h";"e"};{\ar "i";"f"};%
{\ar "j";"h"};{\ar "k";"g"};{\ar "m";"i"};%
{\ar "h";"k"};{\ar "i";"k"};%
\end{xy}
\end{minipage}
\begin{minipage}[t]{.10\linewidth}
\begin{xy}
(25,50)*+{t_{0,1,0}^{(-1)}}="a";%
(10,40)*+{\fbox{$ t_{1,1,0}^{(-3)}$}} ="b"; (40,40)*+{t_{0,0,1}^{(-3)}}="c";%
(25,30)*+{t_{0,2,0}^{(-3)}}="d";%
(10,20)*+{t_{2,0,0}^{(-3)}}="e"; (40,20)*+{t_{0,0,2}^{(-5)}}="f";%
(25,10)*+{t_{0,3,0}^{(-5)}}="g";%
(10,0)*+{t_{3,0,0}^{(-5)}}="h"; (40,0)*+{t_{0,0,3}^{(-7)}}="i";%
(10,-15)*+{\vdots}="j"; (25,-15)*+{\vdots}="k";(40,-15)*+{\vdots}="m";%
(25,-25)*+{(b)}="n";%
{\ar "a";"c"};%
{\ar "b";"a"};{\ar "b";"e"};{\ar "c";"d"};%
{\ar "d";"b"};{\ar "d";"f"};%
{\ar "e";"g"};{\ar "f";"c"};{\ar "f";"g"};%
{\ar "g";"d"};{\ar "g";"h"};{\ar "g";"i"};%
{\ar "h";"e"};{\ar "i";"f"};%
{\ar "j";"h"};{\ar "k";"g"};{\ar "m";"i"};%
{\ar "h";"k"};{\ar "i";"k"};%
\end{xy}
\end{minipage}
\begin{minipage}[t]{.10\linewidth}
\begin{xy}
(25,50)*+{t_{0,1,0}^{(-1)}}="a";%
(10,40)*+{t_{1,1,0}^{(-3)}}="b"; (40,40)*+{t_{0,0,1}^{(-3)}}="c";%
(25,30)*+{t_{0,2,0}^{(-3)}}="d";%
(10,20)*+{\fbox{$t_{2,1,0}^{(-5)}$}}="e"; (40,20)*+{t_{0,0,2}^{(-5)}}="f";%
(25,10)*+{t_{0,3,0}^{(-5)}}="g";%
(10,0)*+{t_{3,0,0}^{(-5)}}="h"; (40,0)*+{t_{0,0,3}^{(-7)}}="i";%
(10,-15)*+{\vdots}="j"; (25,-15)*+{\vdots}="k";(40,-15)*+{\vdots}="m";%
(25,-25)*+{(c)}="n";%
{\ar "a";"c"};%
{\ar "b";"a"};{\ar "b";"g"};{\ar "c";"d"};%
{\ar "d";"b"};{\ar "d";"f"};%
{\ar "e";"b"};{\ar "e";"h"};{\ar "f";"c"};{\ar "f";"g"};%
{\ar "g";"d"};{\ar "g";"e"};{\ar "g";"i"};%
{\ar "i";"f"};%
{\ar "j";"h"};{\ar "k";"g"};{\ar "m";"i"};%
{\ar "h";"k"};{\ar "i";"k"};%
\end{xy}
\end{minipage}
}
\end{figure}

\begin{figure}[H]
\resizebox{.6\width}{.6\height}{
\begin{minipage}[t]{.10\linewidth}
\begin{xy}
(25,50)*+{t_{0,1,0}^{(-1)}}="a";%
(10,40)*+{t_{1,1,0}^{(-3)}} ="b"; (40,40)*+{t_{0,0,1}^{(-3)}}="c";%
(25,30)*+{t_{0,2,0}^{(-3)}}="d";%
(10,20)*+{t_{2,1,0}^{(-5)}}="e"; (40,20)*+{t_{0,0,2}^{(-5)}}="f";%
(25,10)*+{t_{0,3,0}^{(-5)}}="g";%
(10,0)*+{\fbox{$t_{3,1,0}^{(-7)}$}}="h"; (40,0)*+{t_{0,0,3}^{(-7)}}="i";%
(10,-15)*+{\vdots}="j"; (25,-15)*+{\vdots}="k";(40,-15)*+{\vdots}="m";%
(25,-25)*+{(d)}="n";%
{\ar "a";"c"};%
{\ar "b";"a"};{\ar "b";"g"};{\ar "c";"d"};%
{\ar "d";"b"};{\ar "d";"f"};%
{\ar "e";"b"};{\ar "f";"c"};{\ar "f";"g"};%
{\ar "g";"d"};{\ar "g";"e"};{\ar "g";"i"};%
{\ar "h";"e"};{\ar "i";"f"};%
{\ar "h";"j"};{\ar "k";"g"};{\ar "m";"i"};%
{\ar "e";"k"};{\ar "k";"h"};{\ar "i";"k"};%
\end{xy}
\end{minipage}
\begin{minipage}[t]{.10\linewidth}
\begin{xy}
(10,40)*+{\ldots}="g";
\end{xy}
\end{minipage}
\begin{minipage}[t]{.25\linewidth}
\begin{xy}
(25,50)*+{t_{0,1,0}^{(-1)}}="a";%
(10,40)*+{\fbox{$t_{1,2,0}^{(-5)}$}} ="b"; (40,40)*+{t_{0,0,1}^{(-3)}}="c";%
(25,30)*+{t_{0,2,0}^{(-3)}}="d";%
(10,20)*+{t_{2,1,0}^{(-5)}}="e"; (40,20)*+{t_{0,0,2}^{(-5)}}="f";%
(25,10)*+{t_{0,3,0}^{(-5)}}="g";%
(10,0)*+{t_{3,1,0}^{(-7)}}="h"; (40,0)*+{t_{0,0,3}^{(-7)}}="i";%
(10,-15)*+{\vdots}="j"; (25,-15)*+{\vdots}="k";(40,-15)*+{\vdots}="m";%
(25,-25)*+{(e)}="n";%
{\ar "a";"c"};%
{\ar "a";"b"};{\ar "b";"e"};{\ar "b";"d"};{\ar "c";"d"};%
{\ar "d";"a"};{\ar "d";"f"};%
{\ar "e";"a"};{\ar "f";"c"};{\ar "f";"g"};%
{\ar "g";"b"};{\ar "g";"i"};%
{\ar "h";"e"};{\ar "i";"f"};%
{\ar "h";"j"};{\ar "k";"g"};{\ar "m";"i"};%
{\ar "e";"k"};{\ar "k";"h"};{\ar "i";"k"};%
\end{xy}
\end{minipage}
\begin{minipage}[t]{.25\linewidth}
\begin{xy}
(25,50)*+{t_{0,1,0}^{(-1)}}="a";%
(10,40)*+{t_{1,2,0}^{(-5)}} ="b"; (40,40)*+{t_{0,0,1}^{(-3)}}="c";%
(25,30)*+{t_{0,2,0}^{(-3)}}="d";%
(10,20)*+{\fbox{$t_{2,2,0}^{(-7)}$}}="e"; (40,20)*+{t_{0,0,2}^{(-5)}}="f";%
(25,10)*+{t_{0,3,0}^{(-5)}}="g";%
(10,0)*+{t_{3,1,0}^{(-7)}}="h"; (40,0)*+{t_{0,0,3}^{(-7)}}="i";%
(10,-15)*+{\vdots}="j"; (25,-15)*+{\vdots}="k";(40,-15)*+{\vdots}="m";%
(25,-25)*+{(f)}="n";%
{\ar "a";"c"};%
{\ar "a";"e"};{\ar "b";"d"};{\ar "b";"k"};{\ar "c";"d"};%
{\ar "d";"a"};{\ar "d";"f"};%
{\ar "e";"b"};{\ar "f";"c"};{\ar "f";"g"};%
{\ar "g";"b"};{\ar "g";"i"};%
{\ar "e";"h"};{\ar "i";"f"};%
{\ar "h";"a"};{\ar "k";"g"};{\ar "m";"i"};%
{\ar "j";"h"};{\ar "h";"k"};{\ar "i";"k"};{\ar "k";"e"};%
\end{xy}
\end{minipage}
\begin{minipage}[t]{.10\linewidth}
\begin{xy}
(10,40)*+{\ldots}="g";
\end{xy}
\end{minipage}
}
\caption{The mutation sequence $(C_1, C_1)$.} \label{mutation sequence 11}
\end{figure}

\begin{figure}[H]
\resizebox{.6\width}{.6\height}{
\begin{minipage}[t]{.25\linewidth}
\begin{xy}
(25,50)*+{\fbox{$t_{0,1,0}^{(-3)}$}}="a";%
(10,40)*+{t_{1,0,0}^{(-1)}}="b"; (40,40)*+{t_{0,0,1}^{(-3)}}="c";%
(25,30)*+{t_{0,2,0}^{(-3)}}="d";%
(10,20)*+{t_{2,0,0}^{(-3)}}="e"; (40,20)*+{t_{0,0,2}^{(-5)}}="f";%
(25,10)*+{t_{0,3,0}^{(-5)}}="g";%
(10,0)*+{t_{3,0,0}^{(-5)}}="h"; (40,0)*+{t_{0,0,3}^{(-7)}}="i";%
(10,-15)*+{\vdots}="j"; (25,-15)*+{\vdots}="k";(40,-15)*+{\vdots}="m";%
(25,-25)*+{(a)}="n";%
{\ar "b";"a"};{\ar "c";"a"};%
{\ar "a";"d"};{\ar "d";"e"};{\ar "d";"f"};%
{\ar "e";"b"};{\ar "e";"g"};{\ar "f";"c"};{\ar "f";"g"};%
{\ar "g";"d"};{\ar "g";"h"};{\ar "g";"i"};%
{\ar "h";"e"};{\ar "i";"f"};
{\ar "j";"h"};{\ar "k";"g"};{\ar "m";"i"};%
{\ar "h";"k"};{\ar "i";"k"};%
\end{xy}
\end{minipage}
\begin{minipage}[t]{.25\linewidth}
\begin{xy}
(25,50)*+{t_{0,1,0}^{(-3)}}="a";%
(10,40)*+{t_{1,1,0}^{(-3)}} ="b"; (40,40)*+{t_{0,0,1}^{(-3)}}="c";%
(25,30)*+{\fbox{$t_{0,2,0}^{(-5)}$}}="d";%
(10,20)*+{t_{2,0,0}^{(-3)}}="e"; (40,20)*+{t_{0,0,2}^{(-5)}}="f";%
(25,10)*+{t_{0,3,0}^{(-5)}}="g";%
(10,0)*+{t_{3,0,0}^{(-5)}}="h"; (40,0)*+{t_{0,0,3}^{(-7)}}="i";%
(10,-15)*+{\vdots}="j"; (25,-15)*+{\vdots}="k";(40,-15)*+{\vdots}="m";%
(25,-25)*+{(b)}="n";%
{\ar "a";"e"};{\ar "a";"f"};%
{\ar "b";"a"};{\ar "c";"a"};%
{\ar "d";"a"};{\ar "d";"g"};%
{\ar "e";"b"};{\ar "e";"d"};{\ar "f";"c"};{\ar "f";"d"};%
{\ar "g";"h"};{\ar "g";"i"};%
{\ar "h";"e"};{\ar "i";"f"};%
{\ar "j";"h"};{\ar "k";"g"};{\ar "m";"i"};%
{\ar "h";"k"};{\ar "i";"k"};%
\end{xy}
\end{minipage}
\begin{minipage}[t]{.25\linewidth}
\begin{xy}
(25,50)*+{t_{0,1,0}^{(-3)}}="a";%
(10,40)*+{t_{1,0,0}^{(-1)}}="b"; (40,40)*+{t_{0,0,1}^{(-3)}}="c";%
(25,30)*+{t_{0,2,0}^{(-5)}}="d";%
(10,20)*+{t_{2,0,0}^{(-3)}}="e"; (40,20)*+{t_{0,0,2}^{(-5)}}="f";%
(25,10)*+{\fbox{$t_{0,3,0}^{(-7)}$}}="g";%
(10,0)*+{t_{3,0,0}^{(-5)}}="h"; (40,0)*+{t_{0,0,3}^{(-7)}}="i";%
(10,-15)*+{\vdots}="j"; (25,-15)*+{\vdots}="k";(40,-15)*+{\vdots}="m";%
(25,-25)*+{(c)}="n";%
{\ar "a";"e"};{\ar "a";"f"};%
{\ar "b";"a"};{\ar "c";"a"};%
{\ar "d";"a"};{\ar "g";"d"};{\ar "d";"h"};{\ar "d";"i"};%
{\ar "e";"b"};{\ar "e";"d"};{\ar "f";"c"};{\ar "f";"d"};%
{\ar "h";"g"};{\ar "i";"g"};%
{\ar "h";"e"};{\ar "i";"f"};%
{\ar "j";"h"};{\ar "g";"k"};{\ar "m";"i"};%
\end{xy}
\end{minipage}
\begin{minipage}[t]{.10\linewidth}
\begin{xy}
(10,40)*+{\ldots}="g";
\end{xy}
\end{minipage}
}
\end{figure}

\begin{figure}[H]
\resizebox{.6\width}{.6\height}{
\begin{minipage}[t]{.25\linewidth}
\begin{xy}
(25,50)*+{t_{0,1,0}^{(-3)}}="a";%
(10,40)*+{\fbox{$t_{1,0,0}^{(-3)}$}} ="b"; (40,40)*+{t_{0,0,1}^{(-3)}}="c";%
(25,30)*+{t_{0,2,0}^{(-5)}}="d";%
(10,20)*+{t_{2,0,0}^{(-3)}}="e"; (40,20)*+{t_{0,0,2}^{(-5)}}="f";%
(25,10)*+{t_{0,3,0}^{(-7)}}="g";%
(10,0)*+{t_{3,0,0}^{(-5)}}="h"; (40,0)*+{t_{0,0,3}^{(-70)}}="i";%
(10,-15)*+{\vdots}="j"; (25,-15)*+{\vdots}="k";(40,-15)*+{\vdots}="m";%
(25,-25)*+{(d)}="n";%
{\ar "a";"f"};%
{\ar "a";"b"};{\ar "c";"a"};%
{\ar "d";"a"};{\ar "g";"d"};{\ar "d";"h"};{\ar "d";"i"};%
{\ar "b";"e"};{\ar "e";"d"};{\ar "f";"c"};{\ar "f";"d"};%
{\ar "h";"g"};{\ar "i";"g"};%
{\ar "h";"e"};{\ar "i";"f"};%
{\ar "j";"h"};{\ar "k";"g"};{\ar "m";"i"};%
{\ar "g";"j"};{\ar "g";"m"};%
\end{xy}
\end{minipage}
\begin{minipage}[t]{.25\linewidth}
\begin{xy}
(25,50)*+{t_{0,1,0}^{(-3)}}="a";%
(10,40)*+{t_{1,0,0}^{(-3)}}="b"; (40,40)*+{t_{0,0,1}^{(-3)}}="c";%
(25,30)*+{t_{0,2,0}^{(-5)}}="d";%
(10,20)*+{\fbox{$t_{2,0,0}^{(-5)}$}}="e"; (40,20)*+{t_{0,0,2}^{(-5)}}="f";%
(25,10)*+{t_{0,3,0}^{(-7)}}="g";%
(10,0)*+{t_{3,0,0}^{(-5)}}="h"; (40,0)*+{t_{0,0,3}^{(-7)}}="i";%
(10,-15)*+{\vdots}="j"; (25,-15)*+{\vdots}="k";(40,-15)*+{\vdots}="m";%
(25,-25)*+{(e)}="n";%
{\ar "a";"f"};%
{\ar "a";"b"};{\ar "c";"a"};{\ar "b";"d"};%
{\ar "d";"a"};{\ar "g";"d"};{\ar "d";"i"};%
{\ar "e";"b"};{\ar "d";"e"};{\ar "f";"c"};{\ar "f";"d"};%
{\ar "h";"g"};{\ar "i";"g"};%
{\ar "e";"h"};{\ar "i";"f"};%
{\ar "j";"h"};{\ar "k";"g"};{\ar "m";"i"};%
{\ar "g";"j"};{\ar "g";"m"};%
\end{xy}
\end{minipage}
\begin{minipage}[t]{.25\linewidth}
\begin{xy}
(25,50)*+{t_{0,1,0}^{(-3)}}="a";%
(10,40)*+{t_{1,0,0}^{(-3)}} ="b"; (40,40)*+{t_{0,0,1}^{(-3)}}="c";%
(25,30)*+{t_{0,2,0}^{(-5)}}="d";%
(10,20)*+{t_{2,0,0}^{(-5)}}="e"; (40,20)*+{t_{0,0,2}^{(-5)}}="f";%
(25,10)*+{t_{0,3,0}^{(-7)}}="g";%
(10,0)*+{\fbox{$t_{3,0,0}^{(-7)}$}}="h"; (40,0)*+{t_{0,0,3}^{(-7)}}="i";%
(10,-15)*+{\vdots}="j"; (25,-15)*+{\vdots}="k";(40,-15)*+{\vdots}="m";%
(25,-25)*+{(f)}="n";%
{\ar "a";"f"};%
{\ar "a";"b"};{\ar "c";"a"};{\ar "b";"d"};%
{\ar "d";"a"};{\ar "g";"d"};{\ar "d";"i"};%
{\ar "e";"b"};{\ar "d";"e"};{\ar "f";"c"};{\ar "f";"d"};{\ar "e";"g"};%
{\ar "g";"h"};{\ar "i";"g"};%
{\ar "h";"e"};{\ar "i";"f"};%
{\ar "h";"j"};{\ar "k";"g"};{\ar "m";"i"};%
{\ar "g";"m"};%
\end{xy}
\end{minipage}
\begin{minipage}[t]{.10\linewidth}
\begin{xy}
(10,40)*+{\ldots}="g";
\end{xy}
\end{minipage}
}
\end{figure}

\begin{figure}[H]
\resizebox{.6\width}{.6\height}{
\begin{minipage}[t]{.25\linewidth}
\begin{xy}
(25,50)*+{\fbox{$t_{0,1,1}^{(-5)}$}}="a";%
(10,40)*+{t_{1,0,0}^{(-3)}}="b"; (40,40)*+{t_{0,0,1}^{(-3)}}="c";%
(25,30)*+{t_{0,2,0}^{(-5)}}="d";%
(10,20)*+{t_{2,0,0}^{(-5)}}="e"; (40,20)*+{t_{0,0,2}^{(-5)}}="f";%
(25,10)*+{t_{0,3,0}^{(-7)}}="g";%
(10,0)*+{t_{3,0,0}^{(-7)}}="h"; (40,0)*+{t_{0,0,3}^{(-7)}}="i";%
(10,-15)*+{\vdots}="j"; (25,-15)*+{\vdots}="k";(40,-15)*+{\vdots}="m";%
(25,-25)*+{(g)}="n";%
{\ar "b";"a"};{\ar "a";"c"};{\ar "c";"b"};%
{\ar "a";"d"};{\ar "g";"d"};{\ar "d";"i"};%
{\ar "e";"b"};{\ar "d";"e"};{\ar "f";"a"};{\ar "e";"g"};%
{\ar "g";"h"};{\ar "i";"g"};%
{\ar "h";"e"};{\ar "i";"f"};%
{\ar "j";"h"};{\ar "g";"k"};{\ar "m";"i"};%
{\ar "h";"k"};{\ar "g";"m"};%
\end{xy}
\end{minipage}
\begin{minipage}[t]{.25\linewidth}
\begin{xy}
(25,50)*+{t_{0,1,1}^{(-5)}}="a";%
(10,40)*+{t_{1,0,0}^{(-3)}} ="b";
(40,40)*+{t_{0,0,1}^{(-3)}}="c";%
(25,30)*+{\fbox{$t_{0,2,1}^{(-7)}$}}="d";%
(10,20)*+{t_{2,0,0}^{(-5)}}="e"; (40,20)*+{t_{0,0,2}^{(-5)}}="f";%
(25,10)*+{t_{0,3,0}^{(-5)}}="g";%
(10,0)*+{t_{3,0,0}^{(-7)}}="h"; (40,0)*+{t_{0,0,3}^{(-7)}}="i";%
(10,-15)*+{\vdots}="j"; (25,-15)*+{\vdots}="k";(40,-15)*+{\vdots}="m";%
(25,-25)*+{(h)}="n";%
{\ar "a";"i"};{\ar "a";"e"};%
{\ar "b";"a"};{\ar "a";"c"};{\ar "c";"b"};%
{\ar "d";"a"};{\ar "d";"g"};{\ar "i";"d"};%
{\ar "e";"b"};{\ar "e";"d"};{\ar "f";"a"};%
{\ar "g";"h"};%
{\ar "h";"e"};{\ar "i";"f"};%
{\ar "j";"h"};{\ar "g";"k"};{\ar "m";"i"};%
{\ar "h";"k"};{\ar "g";"m"};%
\end{xy}
\end{minipage}
\begin{minipage}[t]{.25\linewidth}
\begin{xy}
(25,50)*+{t_{0,1,1}^{(-5)}}="a";%
(10,40)*+{t_{1,0,0}^{(-3)}}="b"; (40,40)*+{t_{0,0,1}^{(-3)}}="c";%
(25,30)*+{t_{0,2,1}^{(-7)}}="d";%
(10,20)*+{t_{2,0,0}^{(-5)}}="e"; (40,20)*+{t_{0,0,2}^{(-5)}}="f";%
(25,10)*+{\fbox{$t_{0,3,1}^{(-9)}$}}="g";%
(10,0)*+{t_{3,0,0}^{(-7)}}="h"; (40,0)*+{t_{0,0,3}^{(-7)}}="i";%
(10,-15)*+{\vdots}="j"; (25,-15)*+{\vdots}="k";(40,-15)*+{\vdots}="m";%
(25,-25)*+{(i)}="n";%
{\ar "a";"i"};{\ar "a";"e"};%
{\ar "b";"a"};{\ar "a";"c"};{\ar "c";"b"};%
{\ar "d";"a"};{\ar "d";"h"};{\ar "g";"d"};{\ar "i";"d"};%
{\ar "e";"b"};{\ar "e";"d"};{\ar "f";"a"};%
{\ar "h";"g"};%
{\ar "h";"e"};{\ar "i";"f"};%
{\ar "j";"h"};{\ar "g";"k"};{\ar "m";"i"};%
{\ar "d";"m"};{\ar "m";"g"};%
\end{xy}
\end{minipage}
\begin{minipage}[t]{.10\linewidth}
\begin{xy}
(10,40)*+{\ldots}="g";
\end{xy}
\end{minipage}
}
\end{figure}

\begin{figure}[H]
\resizebox{.6\width}{.6\height}{
\begin{minipage}[t]{.25\linewidth}
\begin{xy}
(25,50)*+{t_{0,1,1}^{(-5)}}="a";%
(10,40)*+{\fbox{$t_{1,0,1}^{(-5)}$}} ="b"; (40,40)*+{t_{0,0,1}^{(-3)}}="c";%
(25,30)*+{t_{0,2,1}^{(-7)}}="d";%
(10,20)*+{t_{2,0,0}^{(-5)}}="e"; (40,20)*+{t_{0,0,2}^{(-5)}}="f";%
(25,10)*+{t_{0,3,1}^{(-9)}}="g";%
(10,0)*+{t_{3,0,0}^{(-7)}}="h"; (40,0)*+{t_{0,0,3}^{(-7)}}="i";%
(10,-15)*+{\vdots}="j"; (25,-15)*+{\vdots}="k";(40,-15)*+{\vdots}="m";(40,-22)*+{}="p";%
(25,-25)*+{(j)}="n";%
{\ar "a";"i"};%
{\ar "a";"b"};{\ar "b";"e"};{\ar "b";"c"};%
{\ar "d";"a"};{\ar "d";"h"};{\ar "g";"d"};{\ar "i";"d"};%
{\ar "e";"d"};{\ar "f";"a"};%
{\ar "h";"g"};%
{\ar "h";"e"};{\ar "i";"f"};%
{\ar "j";"h"};{\ar "k";"g"};{\ar "m";"i"};%
{\ar "d";"m"};{\ar "g";"j"};{\ar "m";"g"};{\ar "g";"p"};%
\end{xy}
\end{minipage}
\begin{minipage}[t]{.25\linewidth}
\begin{xy}
(25,50)*+{t_{0,1,1}^{(-5)}}="a";%
(10,40)*+{t_{1,0,1}^{(-5)}}="b"; (40,40)*+{t_{0,0,1}^{(-3)}}="c";%
(25,30)*+{t_{0,2,1}^{(-7)}}="d";%
(10,20)*+{\fbox{$t_{2,0,1}^{(-7)}$}}="e"; (40,20)*+{t_{0,0,2}^{(-5)}}="f";%
(25,10)*+{t_{0,3,1}^{(-9)}}="g";%
(10,0)*+{t_{3,0,0}^{(-7)}}="h"; (40,0)*+{t_{0,0,3}^{(-7)}}="i";%
(10,-15)*+{\vdots}="j"; (25,-15)*+{\vdots}="k";(40,-15)*+{\vdots}="m";(40,-22)*+{}="p";%
(25,-25)*+{(k)}="n";%
{\ar "a";"i"};%
{\ar "a";"b"};{\ar "e";"b"};{\ar "b";"c"};{\ar "b";"d"};%
{\ar "d";"a"};{\ar "g";"d"};{\ar "i";"d"};%
{\ar "d";"e"};{\ar "e";"h"};{\ar "f";"a"};%
{\ar "h";"g"};%
{\ar "i";"f"};%
{\ar "j";"h"};{\ar "k";"g"};{\ar "m";"i"};%
{\ar "d";"m"};{\ar "g";"j"};{\ar "m";"g"};{\ar "g";"p"};%
\end{xy}
\end{minipage}
\begin{minipage}[t]{.25\linewidth}
\begin{xy}
(25,50)*+{t_{0,1,1}^{(-5)}}="a";%
(10,40)*+{t_{1,0,1}^{(-5)}} ="b"; (40,40)*+{t_{0,0,1}^{(-3)}}="c";%
(25,30)*+{t_{0,2,1}^{(-7)}}="d";%
(10,20)*+{t_{2,0,1}^{(-7)}}="e"; (40,20)*+{t_{0,0,2}^{(-5)}}="f";%
(25,10)*+{t_{0,3,1}^{(-9)}}="g";%
(10,0)*+{\fbox{$t_{3,0,1}^{(-9)}$}}="h"; (40,0)*+{t_{0,0,3}^{(-7)}}="i";%
(10,-15)*+{\vdots}="j"; (25,-15)*+{\vdots}="k";(40,-15)*+{\vdots}="m";(40,-22)*+{}="p";%
(25,-25)*+{(l)}="n";%
{\ar "a";"i"};%
{\ar "a";"b"};{\ar "e";"b"};{\ar "b";"c"};{\ar "b";"d"};%
{\ar "d";"a"};{\ar "g";"d"};{\ar "i";"d"};%
{\ar "d";"e"};{\ar "e";"g"};{\ar "h";"e"};{\ar "f";"a"};%
{\ar "g";"h"};%
{\ar "i";"f"};%
{\ar "j";"h"};{\ar "k";"g"};{\ar "m";"i"};%
{\ar "d";"m"};{\ar "m";"g"};{\ar "g";"p"};%
\end{xy}
\end{minipage}
\begin{minipage}[t]{.10\linewidth}
\begin{xy}
(10,40)*+{\ldots}="g";
\end{xy}
\end{minipage}
}
\end{figure}

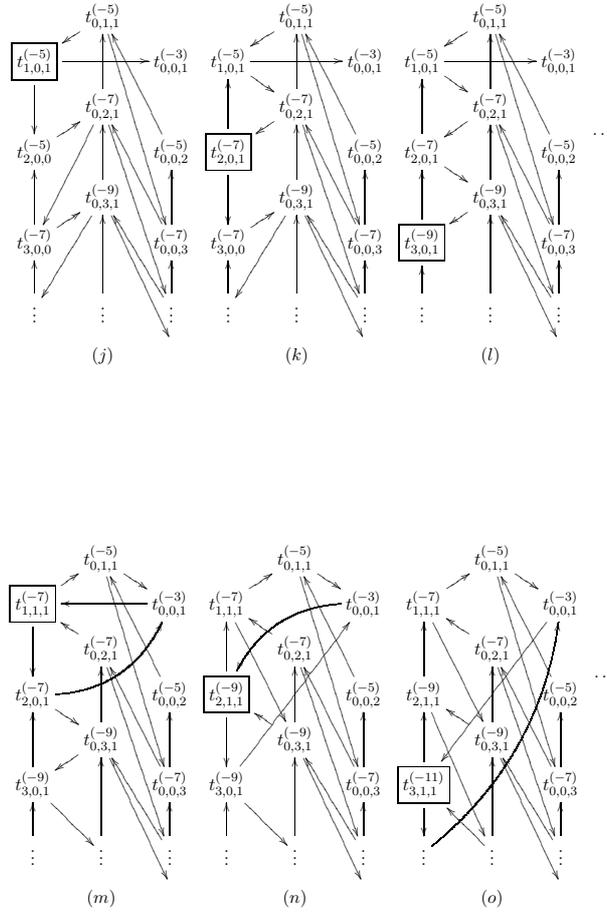
\begin{figure}[H]
\resizebox{.6\width}{.6\height}{
\begin{minipage}[t]{.25\linewidth}
\begin{xy}
(25,50)*+{t_{0,1,1}^{(-5)}}="a";%
(10,40)*+{\fbox{$t_{1,1,1}^{(-7)}$}}="b"; (40,40)*+{t_{0,0,1}^{(-3)}}="c";%
(25,30)*+{t_{0,2,1}^{(-7)}}="d";%
(10,20)*+{t_{2,0,1}^{(-7)}}="e"; (40,20)*+{t_{0,0,2}^{(-5)}}="f";%
(25,10)*+{t_{0,3,1}^{(-9)}}="g";%
(10,0)*+{t_{3,0,1}^{(-9)}}="h"; (40,0)*+{t_{0,0,3}^{(-7)}}="i";%
(10,-15)*+{\vdots}="j"; (25,-15)*+{\vdots}="k";(40,-15)*+{\vdots}="m";(40,-22)*+{}="p";%
(25,-25)*+{(m)}="n";%
{\ar "a";"i"};{\ar "a";"c"};%
{\ar "b";"a"};{\ar "b";"e"};{\ar "c";"b"};{\ar "d";"b"};%
{\ar "g";"d"};{\ar "i";"d"};%
{\ar "e";"g"};{\ar@/_1.6pc/"e";"c"};{\ar "h";"e"};{\ar "f";"a"};%
{\ar "g";"h"};%
{\ar "i";"f"};%
{\ar "j";"h"};{\ar "k";"g"};{\ar "m";"i"};%
{\ar "d";"m"};{\ar "h";"k"};{\ar "m";"g"};{\ar "g";"p"};%
\end{xy}
\end{minipage}
\begin{minipage}[t]{.25\linewidth}
\begin{xy}
(25,50)*+{t_{0,1,1}^{(-5)}}="a";%
(10,40)*+{t_{1,1,1}^{(-7)}} ="b"; (40,40)*+{t_{0,0,1}^{(-3)}}="c";%
(25,30)*+{t_{0,2,1}^{(-7)}}="d";%
(10,20)*+{\fbox{$t_{2,1,1}^{(-9)}$}}="e"; (40,20)*+{t_{0,0,2}^{(-5)}}="f";%
(25,10)*+{t_{0,3,1}^{(-9)}}="g";%
(10,0)*+{t_{3,0,1}^{(-9)}}="h"; (40,0)*+{t_{0,0,3}^{(-7)}}="i";%
(10,-15)*+{\vdots}="j"; (25,-15)*+{\vdots}="k";(40,-15)*+{\vdots}="m";(40,-22)*+{}="p";%
(25,-25)*+{(n)}="n";%
{\ar "a";"i"};{\ar "a";"c"};%
{\ar "b";"a"};{\ar "b";"g"};{\ar "e";"b"};{\ar@/_1.6pc/"c";"e"};{\ar "d";"b"};{\ar "e";"h"};%
{\ar "g";"d"};{\ar "g";"e"};{\ar "i";"d"};%
{\ar "f";"a"};%
{\ar "h";"c"};%
{\ar "i";"f"};%
{\ar "j";"h"};{\ar "k";"g"};{\ar "m";"i"};%
{\ar "d";"m"};{\ar "h";"k"};{\ar "m";"g"};{\ar "g";"p"};%
\end{xy}
\end{minipage}
\begin{minipage}[t]{.25\linewidth}
\begin{xy}
(25,50)*+{t_{0,1,1}^{(-5)}}="a";%
(10,40)*+{t_{1,1,1}^{(-7)}}="b"; (40,40)*+{t_{0,0,1}^{(-3)}}="c";%
(25,30)*+{t_{0,2,1}^{(-7)}}="d";%
(10,20)*+{t_{2,1,1}^{(-9)}}="e"; (40,20)*+{t_{0,0,2}^{(-5)}}="f";%
(25,10)*+{t_{0,3,1}^{(-9)}}="g";%
(10,0)*+{\fbox{$t_{3,1,1}^{(-11)}$}}="h"; (40,0)*+{t_{0,0,3}^{(-7)}}="i";%
(10,-15)*+{\vdots}="j"; (25,-15)*+{\vdots}="k";(40,-15)*+{\vdots}="m";(40,-22)*+{}="p";%
(25,-25)*+{(o)}="n";%
{\ar "a";"i"};{\ar "a";"c"};%
{\ar "b";"a"};{\ar "b";"g"};{\ar "e";"b"};{\ar "c";"h"};{\ar "d";"b"};{\ar "h";"e"};%
{\ar "g";"d"};{\ar "g";"e"};{\ar "i";"d"};%
{\ar "f";"a"};%
{\ar "i";"f"};%
{\ar "h";"j"};{\ar "k";"g"};{\ar "m";"i"};%
{\ar "d";"m"};{\ar "k";"h"};{\ar@/_1.6pc/"j";"c"};{\ar "e";"k"};{\ar "m";"g"};{\ar "g";"p"};%
\end{xy}
\end{minipage}
\begin{minipage}[t]{.10\linewidth}
\begin{xy}
(10,40)*+{\ldots}="g";
\end{xy}
\end{minipage}
}
\caption{The mutation sequence $(C_2, C_1, C_2, C_1, C_1)$.} \label{mutation sequence 21211}
\end{figure}

\begin{figure}[H]
\resizebox{.6\width}{.6\height}{
\begin{minipage}[t]{.25\linewidth}
\begin{xy}
(15,60)*+{t_{0,1}^{(-1)}}="1";%
(15,50)*+{t_{0,2}^{(-3)}}="2"; (30,50)*+{t_{1,0}^{(-1)}}="3";%
(0,40)*+{t_{1,0}^{(-3)}}="4"; (15,40)*+{t_{0,3}^{(-5)}}="5";%
(15,30)*+{t_{0,4}^{(-7)}}="6"; (30,30)*+{t_{2,0}^{(-5)}}="7";%
(0,20)*+{t_{2,0}^{(-7)}}="8"; (15,20)*+{t_{0,5}^{(-9)}}="9";%
(15,10)*+{t_{0,6}^{(-11)}}="10"; (30,10)*+{t_{3,0}^{(-9)}}="11";%
(0,0)*+{t_{3,0}^{(-11)}}="12";(15,0)*+{t_{0,7}^{(-13)}}="13";%
(0,-15)*+{\vdots}="14";(15,-15)*+{\vdots}="15";(30,-15)*+{\vdots}="16";%
(15,-25)*+{(a)}="n";%
{\ar "2";"1"};{\ar "5";"2"};{\ar "6";"5"};{\ar "9";"6"};{\ar "10";"9"};{\ar "13";"10"};{\ar "15";"13"};%
{\ar "12";"8"};{\ar "8";"4"};{\ar "11";"7"};{\ar "7";"3"};{\ar "16";"11"};{\ar "14";"12"};%
{\ar "2";"4"};{\ar "4";"6"};{\ar "6";"8"};{\ar "8";"10"};{\ar "10";"12"};{\ar "12";"15"};%
{\ar "1";"3"};{\ar "3";"5"};{\ar "5";"7"};{\ar "7";"9"};{\ar "9";"11"};{\ar "11";"13"};{\ar "13";"16"};%
\end{xy}
\end{minipage}
\begin{minipage}[t]{.25\linewidth}
\begin{xy}
(16,60)*+{t_{0,1}^{(-1)}}="1";%
(15,50)*+{t_{0,2}^{(-3)}}="2"; (30,50)*+{\fbox{$t_{1,1}^{(-5)}$}}="3";%
(0,40)*+{t_{1,0}^{(-3)}}="4"; (15,40)*+{t_{0,3}^{(-5)}}="5";%
(15,30)*+{t_{0,4}^{(-7)}}="6"; (30,30)*+{t_{2,0}^{(-5)}}="7";%
(0,20)*+{t_{2,0}^{(-7)}}="8"; (15,20)*+{t_{0,5}^{(-9)}}="9";%
(15,10)*+{t_{0,6}^{(-11)}}="10"; (30,10)*+{t_{3,0}^{(-9)}}="11";%
(0,0)*+{t_{3,0}^{(-11)}}="12";(15,0)*+{t_{0,7}^{(-13)}}="13";%
(0,-15)*+{\vdots}="14";(15,-15)*+{\vdots}="15";(30,-15)*+{\vdots}="16";%
(15,-25)*+{(b)}="n";%
{\ar "2";"1"};{\ar "5";"2"};{\ar "6";"5"};{\ar "9";"6"};{\ar "10";"9"};{\ar "13";"10"};{\ar "15";"13"};%
{\ar "12";"8"};{\ar "8";"4"};{\ar "11";"7"};{\ar "3";"7"};{\ar "16";"11"};{\ar "14";"12"};%
{\ar "2";"4"};{\ar "4";"6"};{\ar "6";"8"};{\ar "8";"10"};{\ar "10";"12"};{\ar "12";"15"};%
{\ar "3";"1"};{\ar "5";"3"};{\ar "7";"9"};{\ar "9";"11"};{\ar "11";"13"};{\ar "13";"16"};%
{\ar@/_2pc/"1";"5"};%
\end{xy}
\end{minipage}
\begin{minipage}[t]{.25\linewidth}
\begin{xy}
(15,60)*+{t_{0,1}^{(-1)}}="1";%
(15,50)*+{t_{0,2}^{(-3)}}="2"; (30,50)*+{t_{1,1}^{(-5)}}="3";%
(0,40)*+{t_{1,0}^{(-3)}}="4"; (15,40)*+{t_{0,3}^{(-5)}}="5";%
(15,30)*+{t_{0,4}^{(-7)}}="6"; (30,30)*+{\fbox{$t_{2,1}^{(-9)}$}}="7";%
(0,20)*+{t_{2,0}^{(-7)}}="8"; (15,20)*+{t_{0,5}^{(-9)}}="9";%
(15,10)*+{t_{0,6}^{(-11)}}="10"; (30,10)*+{t_{3,0}^{(-9)}}="11";%
(0,0)*+{t_{3,0}^{(-11)}}="12";(15,0)*+{t_{0,7}^{(-13)}}="13";%
(0,-15)*+{\vdots}="14";(15,-15)*+{\vdots}="15";(30,-15)*+{\vdots}="16";%
(15,-25)*+{(c)}="n";%
{\ar "2";"1"};{\ar "5";"2"};{\ar "6";"5"};{\ar "9";"6"};{\ar "10";"9"};{\ar "13";"10"};{\ar "15";"13"};%
{\ar "12";"8"};{\ar "8";"4"};{\ar "7";"11"};{\ar "7";"3"};{\ar "16";"11"};{\ar "14";"12"};%
{\ar "2";"4"};{\ar "4";"6"};{\ar "6";"8"};{\ar "8";"10"};{\ar "10";"12"};{\ar "12";"15"};%
{\ar "3";"1"};{\ar "5";"3"};{\ar "9";"7"};{\ar "11";"13"};{\ar "13";"16"};%
{\ar@/_2pc/"1";"5"};{\ar "3";"9"};%
\end{xy}
\end{minipage}
}
\end{figure}

\begin{figure}[H]
\resizebox{.6\width}{.6\height}{
\begin{minipage}[t]{.25\linewidth}
\begin{xy}
(15,60)*+{t_{0,1}^{(-1)}}="1";%
(15,50)*+{t_{0,2}^{(-3)}}="2"; (30,50)*+{t_{1,1}^{(-5)}}="3";%
(0,40)*+{t_{1,0}^{(-3)}}="4"; (15,40)*+{t_{0,3}^{(-5)}}="5";%
(15,30)*+{t_{0,4}^{(-7)}}="6"; (30,30)*+{t_{2,1}^{(-9)}}="7";%
(0,20)*+{t_{2,0}^{(-7)}}="8"; (15,20)*+{t_{0,5}^{(-9)}}="9";%
(15,10)*+{t_{0,6}^{(-11)}}="10"; (30,10)*+{\fbox{$t_{3,1}^{(-13)}$}}="11";%
(0,0)*+{t_{3,0}^{(-11)}}="12";(15,0)*+{t_{0,7}^{(-13)}}="13";%
(0,-15)*+{\vdots}="14";(15,-15)*+{\vdots}="15";(30,-15)*+{\vdots}="16";%
(15,-25)*+{(d)}="n";%
{\ar "2";"1"};{\ar "5";"2"};{\ar "6";"5"};{\ar "9";"6"};{\ar "10";"9"};{\ar "13";"10"};{\ar "15";"13"};%
{\ar "12";"8"};{\ar "8";"4"};{\ar "11";"7"};{\ar "7";"3"};{\ar "11";"16"};{\ar "14";"12"};%
{\ar "2";"4"};{\ar "4";"6"};{\ar "6";"8"};{\ar "8";"10"};{\ar "10";"12"};{\ar "12";"15"};%
{\ar "3";"1"};{\ar "5";"3"};{\ar "9";"7"};{\ar "13";"11"};%
{\ar@/_2pc/"1";"5"};{\ar "3";"9"};{\ar "7";"13"};%
\end{xy}
\end{minipage}
\begin{minipage}[t]{.25\linewidth}
\begin{xy}
(10,70)*+{\ldots}="g";
\end{xy}
\end{minipage}
\begin{minipage}[t]{.25\linewidth}
\begin{xy}
(15,60)*+{t_{0,1}^{(-1)}}="1";%
(15,50)*+{t_{0,2}^{(-3)}}="2"; (30,50)*+{\fbox{$t_{1,3}^{(-9)}$}}="3";%
(0,40)*+{t_{1,0}^{(-3)}}="4"; (15,40)*+{t_{0,3}^{(-5)}}="5";%
(15,30)*+{t_{0,4}^{(-7)}}="6"; (30,30)*+{t_{2,1}^{(-9)}}="7";%
(0,20)*+{t_{2,0}^{(-7)}}="8"; (15,20)*+{t_{0,5}^{(-9)}}="9";%
(15,10)*+{t_{0,6}^{(-11)}}="10"; (30,10)*+{t_{3,1}^{(-13)}}="11";%
(0,0)*+{t_{3,0}^{(-11)}}="12";(15,0)*+{t_{0,7}^{(-13)}}="13";%
(0,-15)*+{\vdots}="14";(15,-15)*+{\vdots}="15";(30,-15)*+{\vdots}="16";(15,-22)*+{}="17";%
(15,-25)*+{(e)}="n";%
{\ar "2";"1"};{\ar "5";"2"};{\ar "6";"5"};{\ar "9";"6"};{\ar "10";"9"};{\ar "13";"10"};{\ar "15";"13"};%
{\ar "12";"8"};{\ar "8";"4"};{\ar "11";"7"};{\ar "3";"7"};{\ar "16";"11"};{\ar "14";"12"};%
{\ar "2";"4"};{\ar "4";"6"};{\ar "6";"8"};{\ar "8";"10"};{\ar "10";"12"};{\ar "12";"15"};%
{\ar "1";"3"};{\ar "3";"5"};{\ar "13";"11"};%
{\ar "9";"3"};{\ar "7";"13"};{\ar "7";"1"};{\ar@/_2pc/"5";"9"};{\ar "11";"17"};%
\end{xy}
\end{minipage}
\begin{minipage}[t]{.25\linewidth}
\begin{xy}
(15,60)*+{t_{0,1}^{(-1)}}="1";%
(15,50)*+{t_{0,2}^{(-3)}}="2"; (30,50)*+{t_{1,3}^{(-9)}}="3";%
(0,40)*+{t_{1,0}^{(-3)}}="4"; (15,40)*+{t_{0,3}^{(-5)}}="5";%
(15,30)*+{t_{0,4}^{(-7)}}="6"; (30,30)*+{\fbox{$t_{2,3}^{(-13)}$}}="7";%
(0,20)*+{t_{2,0}^{(-7)}}="8"; (15,20)*+{t_{0,5}^{(-9)}}="9";%
(15,10)*+{t_{0,6}^{(-11)}}="10"; (30,10)*+{t_{3,1}^{(-13)}}="11";%
(0,0)*+{t_{3,0}^{(-11)}}="12";(15,0)*+{t_{0,7}^{(-13)}}="13";%
(0,-15)*+{\vdots}="14";(15,-15)*+{\vdots}="15";(30,-15)*+{\vdots}="16";(15,-22)*+{}="17";%
(15,-25)*+{(f)}="n";%
{\ar "2";"1"};{\ar "5";"2"};{\ar "6";"5"};{\ar "9";"6"};{\ar "10";"9"};{\ar "13";"10"};{\ar "15";"13"};%
{\ar "12";"8"};{\ar "8";"4"};{\ar "7";"11"};{\ar "7";"3"};{\ar "16";"11"};{\ar "14";"12"};%
{\ar "2";"4"};{\ar "4";"6"};{\ar "6";"8"};{\ar "8";"10"};{\ar "10";"12"};{\ar "12";"15"};%
{\ar "3";"5"};%
{\ar "9";"3"};{\ar "13";"7"};{\ar "1";"7"};{\ar@/_2pc/"5";"9"};{\ar "3";"13"};{\ar "11";"1"};{\ar "11";"17"};%
\end{xy}
\end{minipage}
\begin{minipage}[t]{.25\linewidth}
\begin{xy}
(10,70)*+{\ldots}="g";
\end{xy}
\end{minipage}
}
\caption{The mutation sequence $(C_1, C_1)$.} \label{mutation sequence B11}
\end{figure}

\begin{figure}[H]
\resizebox{.6\width}{.6\height}{
\begin{minipage}[t]{.25\linewidth}
\begin{xy}
(15,60)*+{t_{0,1}^{(-1)}}="1";%
(15,50)*+{t_{0,2}^{(-3)}}="2"; (30,50)*+{t_{1,0}^{(-1)}}="3";%
(0,40)*+{\fbox{$t_{1,2}^{(-7)}$}}="4"; (15,40)*+{t_{0,3}^{(-5)}}="5";%
(15,30)*+{t_{0,4}^{(-7)}}="6"; (30,30)*+{t_{2,0}^{(-5)}}="7";%
(0,20)*+{(t_{2,0}^{(-7)})}="8"; (15,20)*+{t_{0,5}^{(-9)}}="9";%
(15,10)*+{t_{0,6}^{-11}}="10"; (30,10)*+{t_{3,0}^{(-9)}}="11";%
(0,0)*+{t_{3,0}^{(-11)}}="12";(15,0)*+{t_{0,7}^{(-13)}}="13";%
(0,-15)*+{\vdots}="14";(15,-15)*+{\vdots}="15";(30,-15)*+{\vdots}="16";%
(15,-25)*+{(a)}="n";%
{\ar "2";"1"};{\ar "5";"2"};{\ar "6";"5"};{\ar "9";"6"};{\ar "10";"9"};{\ar "13";"10"};{\ar "15";"13"};%
{\ar "12";"8"};{\ar "4";"8"};{\ar "11";"7"};{\ar "7";"3"};{\ar "16";"11"};%
{\ar "4";"2"};{\ar "6";"4"};{\ar "8";"10"};{\ar "10";"12"};{\ar "12";"15"};{\ar "14";"12"};%
{\ar "1";"3"};{\ar "3";"5"};{\ar "5";"7"};{\ar "7";"9"};{\ar "9";"11"};{\ar "11";"13"};{\ar "13";"16"};%
{\ar@/_1.5pc/"2";"6"};%
\end{xy}
\end{minipage}
\begin{xy}
(15,60)*+{t_{0,1}^{(-1)}}="1";%
(15,50)*+{t_{0,2}^{(-3)}}="2"; (30,50)*+{t_{1,0}^{(-1)}}="3";%
(0,40)*+{t_{1,2}^{(-7)}}="4"; (15,40)*+{t_{0,3}^{(-5)}}="5";%
(15,30)*+{t_{0,4}^{(-7)}}="6"; (30,30)*+{t_{2,0}^{(-5)}}="7";%
(0,20)*+{\fbox{$t_{2,2}^{(-11)}$}}="8"; (15,20)*+{t_{0,5}^{(-9)}}="9";%
(15,10)*+{t_{0,6}^{(-11)}}="10"; (30,10)*+{t_{3,0}^{(-9)}}="11";%
(0,0)*+{t_{3,0}^{(-11)}}="12";(15,0)*+{t_{0,7}^{(-13)}}="13";%
(0,-15)*+{\vdots}="14";(15,-15)*+{\vdots}="15";(30,-15)*+{\vdots}="16";%
(15,-25)*+{(b)}="n";%
{\ar "2";"1"};{\ar "5";"2"};{\ar "6";"5"};{\ar "9";"6"};{\ar "10";"9"};{\ar "13";"10"};{\ar "15";"13"};%
{\ar "8";"12"};{\ar "8";"4"};{\ar "11";"7"};{\ar "7";"3"};{\ar "16";"11"};%
{\ar "4";"2"};{\ar "6";"4"};{\ar "10";"8"};{\ar "12";"15"};{\ar "14";"12"};%
{\ar "1";"3"};{\ar "3";"5"};{\ar "5";"7"};{\ar "7";"9"};{\ar "9";"11"};{\ar "11";"13"};{\ar "13";"16"};%
{\ar@/_1.5pc/"2";"6"};{\ar "4";"10"};%
\end{xy}
\begin{minipage}[t]{.25\linewidth}
\begin{xy}
(15,60)*+{t_{0,1}^{(-1)}}="1";%
(15,50)*+{t_{0,2}^{(-3)}}="2"; (30,50)*+{t_{1,0}^{(-1)}}="3";%
(0,40)*+{t_{1,2}^{(-7)}}="4"; (15,40)*+{t_{0,3}^{(-5)}}="5";%
(15,30)*+{t_{0,4}^{(-7)}}="6"; (30,30)*+{t_{2,0}^{(-5)}}="7";%
(0,20)*+{t_{2,2}^{(-11)}}="8"; (15,20)*+{t_{0,5}^{(-9)}}="9";%
(15,10)*+{t_{0,6}^{(-11)}}="10"; (30,10)*+{t_{3,0}^{(-9)}}="11";%
(0,0)*+{\fbox{$t_{3,2}^{(-15)}$}}="12";(15,0)*+{t_{0,7}^{(-13)}}="13";%
(0,-15)*+{\vdots}="14";(15,-15)*+{\vdots}="15";(30,-15)*+{\vdots}="16";%
(15,-25)*+{(c)}="n";%
{\ar "2";"1"};{\ar "5";"2"};{\ar "6";"5"};{\ar "9";"6"};{\ar "10";"9"};{\ar "13";"10"};{\ar "15";"13"};%
{\ar "12";"8"};{\ar "8";"4"};{\ar "11";"7"};{\ar "7";"3"};{\ar "16";"11"};%
{\ar "4";"2"};{\ar "6";"4"};{\ar "10";"8"};{\ar "15";"12"};{\ar "12";"14"};%
{\ar "1";"3"};{\ar "3";"5"};{\ar "5";"7"};{\ar "7";"9"};{\ar "9";"11"};{\ar "11";"13"};{\ar "13";"16"};%
{\ar@/_1.5pc/"2";"6"};{\ar "4";"10"};{\ar "8";"15"};%
\end{xy}
\end{minipage}
\begin{minipage}[t]{.10\linewidth}
\begin{xy}
(10,40)*+{\ldots}="g";
\end{xy}
\end{minipage}
}
\end{figure}

\begin{figure}[H]
\centering
\resizebox{.6\width}{.6\height}{
\begin{minipage}[t]{.25\linewidth}
\begin{xy}
(15,60)*+{t_{0,1}^{(-1)}}="1";%
(15,50)*+{t_{0,2}^{(-3)}}="2"; (30,50)*+{t_{1,0}^{(-1)}}="3";%
(0,40)*+{\fbox{$t_{1,4}^{(-11)}$}}="4"; (15,40)*+{t_{0,3}^{(-5)}}="5";%
(15,30)*+{t_{0,4}^{(-7)}}="6"; (30,30)*+{t_{2,0}^{(-5)}}="7";%
(0,20)*+{t_{2,2}^{(-11)}}="8"; (15,20)*+{t_{0,5}^{(-9)}}="9";%
(15,10)*+{t_{0,6}^{(-11)}}="10"; (30,10)*+{t_{3,0}^{(-9)}}="11";%
(0,0)*+{t_{3,2}^{(-15)}}="12";(15,0)*+{t_{0,7}^{(-13)}}="13";%
(0,-15)*+{\vdots}="14";(15,-15)*+{\vdots}="15";(30,-15)*+{\vdots}="16";(15,-22)*+{}="17";%
(15,-25)*+{(d)}="n";%
{\ar "2";"1"};{\ar "5";"2"};{\ar "6";"5"};{\ar "9";"6"};{\ar "10";"9"};{\ar "13";"10"};{\ar "15";"13"};%
{\ar "12";"8"};{\ar "4";"8"};{\ar "11";"7"};{\ar "7";"3"};{\ar "16";"11"};{\ar "14";"12"};%
{\ar "2";"4"};{\ar "4";"6"};{\ar "15";"12"};%
{\ar "1";"3"};{\ar "3";"5"};{\ar "5";"7"};{\ar "7";"9"};{\ar "9";"11"};{\ar "11";"13"};{\ar "13";"16"};%
{\ar "10";"4"};{\ar "8";"15"};{\ar "8";"2"};{\ar@/_1.5pc/"6";"10"};{\ar "12";"17"};%
\end{xy}
\end{minipage}
\begin{minipage}[t]{.25\linewidth}
\begin{xy}
(15,60)*+{t_{0,1}^{(-1)}}="1";%
(15,50)*+{t_{0,2}^{(-3)}}="2"; (30,50)*+{t_{1,0}^{(-1)}}="3";%
(0,40)*+{t_{1,4}^{(-11)}}="4"; (15,40)*+{t_{0,3}^{(-5)}}="5";%
(15,30)*+{t_{0,4}^{(-7)}}="6"; (30,30)*+{t_{2,0}^{(-5)}}="7";%
(0,20)*+{\fbox{$t_{2,4}^{(-15)}$}}="8"; (15,20)*+{t_{0,5}^{(-9)}}="9";%
(15,10)*+{t_{0,6}^{(-11)}}="10"; (30,10)*+{t_{3,0}^{(-9)}}="11";%
(0,0)*+{t_{3,2}^{(-15)}}="12";(15,0)*+{t_{0,7}^{(-13)}}="13";%
(0,-15)*+{\vdots}="14";(15,-15)*+{\vdots}="15";(30,-15)*+{\vdots}="16";(15,-22)*+{}="17";%
(15,-25)*+{(e)}="n";%
{\ar "2";"1"};{\ar "5";"2"};{\ar "6";"5"};{\ar "9";"6"};{\ar "10";"9"};{\ar "13";"10"};{\ar "15";"13"};%
{\ar "8";"12"};{\ar "8";"4"};{\ar "11";"7"};{\ar "7";"3"};{\ar "16";"11"};{\ar "14";"12"};%
{\ar "4";"6"};%
{\ar "1";"3"};{\ar "3";"5"};{\ar "5";"7"};{\ar "7";"9"};{\ar "9";"11"};{\ar "11";"13"};{\ar "13";"16"};%
{\ar "10";"4"};{\ar "15";"8"};{\ar "2";"8"};{\ar@/_1.5pc/"6";"10"};{\ar "4";"15"};{\ar "12";"2"};{\ar "12";"17"};%
\end{xy}
\end{minipage}
\begin{minipage}[t]{.25\linewidth}
\begin{xy}
(15,60)*+{t_{0,1}^{(-1)}}="1";%
(15,50)*+{t_{0,2}^{(-3)}}="2"; (30,50)*+{t_{1,0}^{(-1)}}="3";%
(0,40)*+{t_{1,4}^{(-11)}}="4"; (15,40)*+{t_{0,3}^{(-5)}}="5";%
(15,30)*+{t_{0,4}^{(-7)}}="6"; (30,30)*+{t_{2,0}^{(-5)}}="7";%
(0,20)*+{t_{2,4}^{(-15)}}="8"; (15,20)*+{t_{0,5}^{(-9)}}="9";%
(15,10)*+{t_{0,6}^{(-11)}}="10"; (30,10)*+{t_{3,0}^{(-9)}}="11";%
(0,0)*+{\fbox{$t_{3,4}^{(-19)}$}}="12";(15,0)*+{t_{0,7}^{(-13)}}="13";%
(0,-15)*+{\vdots}="14";(15,-15)*+{\vdots}="15";(30,-15)*+{\vdots}="16";(15,-22)*+{}="17";%
(15,-25)*+{(f)}="n";%
{\ar "2";"1"};{\ar "5";"2"};{\ar "6";"5"};{\ar "9";"6"};{\ar "10";"9"};{\ar "13";"10"};{\ar "15";"13"};%
{\ar "12";"8"};{\ar "8";"4"};{\ar "11";"7"};{\ar "7";"3"};{\ar "16";"11"};{\ar "12";"14"};%
{\ar "4";"6"};%
{\ar "1";"3"};{\ar "3";"5"};{\ar "5";"7"};{\ar "7";"9"};{\ar "9";"11"};{\ar "11";"13"};{\ar "13";"16"};%
{\ar "10";"4"};{\ar "15";"8"};{\ar@/_1.5pc/"6";"10"};{\ar "4";"15"};{\ar "2";"12"};{\ar "8";"17"};{\ar "14";"2"};%
\end{xy}
\end{minipage}
\begin{minipage}[t]{.10\linewidth}
\begin{xy}
(10,40)*+{\ldots}="g";
\end{xy}
\end{minipage}
}
\caption{The mutation sequence $(C_3, C_3)$.} \label{mutation sequence B33}
\end{figure}
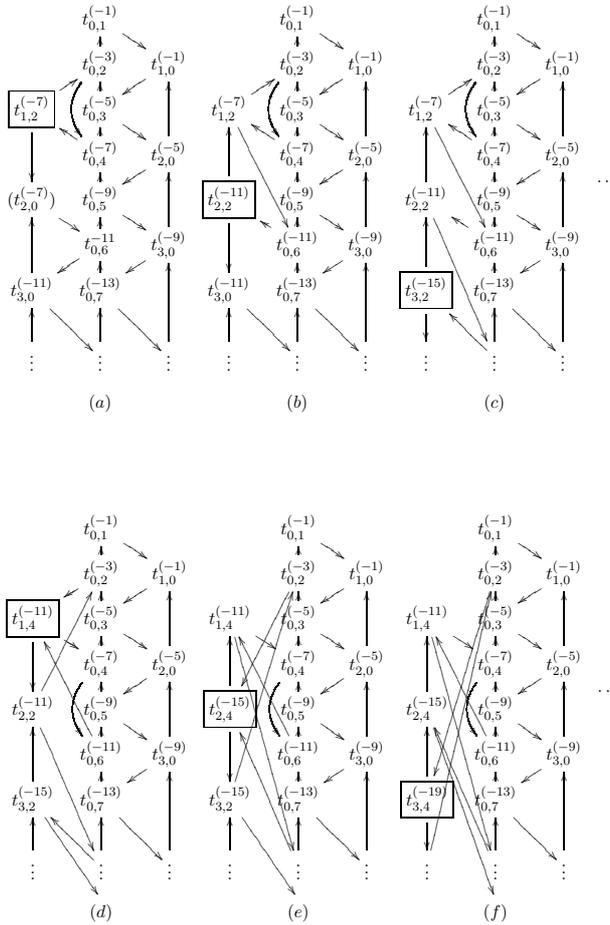

\end{document}